\documentclass{article}
\usepackage{nameref}
\usepackage{varioref}
\usepackage{hyperref}
\usepackage{xr-hyper}
\usepackage[margin=1.2in]{geometry}
\usepackage{amsmath,amssymb,amsthm}
\usepackage{parskip}
\usepackage[dvipsnames]{xcolor}
\usepackage{graphicx}
\usepackage{booktabs}
\usepackage{natbib}
\usepackage{float}
\usepackage{url}
\usepackage{mathtools}
\usepackage[stable]{footmisc}

\newcommand{\R}{\mathbb{R}}
\newcommand{\E}{\mathbb{E}}
\newcommand{\tr}{\operatorname{tr}}
\newcommand{\col}{\operatorname{col}}
\newcommand{\Var}{\operatorname{Var}}
\newcommand{\limp}{\lim_{p\to\infty}}
\newcommand{\Pib}{\Pi}
\newcommand{\Pibp}{\Pi^{\perp}}

\newcommand{\triplebar}{\vert\kern-0.25ex\vert\kern-0.25ex\vert}

\DeclarePairedDelimiterX{\inner}[2]{\langle}{\rangle}{#1, #2}

\newtheorem{theorem}{Theorem}
\newtheorem{lemma}{Lemma}
\newtheorem{proposition}{Proposition}
\newtheorem{corollary}{Corollary}
\newtheorem{assumption}{Assumption}
\theoremstyle{remark}
\newtheorem*{remark}{Remark}

\newcommand{\pn}{{(p,n)}}
\newcommand{\nmax}{n_{\mathrm{max}}}
\newcommand{\pmax}{p_{\mathrm{max}}}

\newcommand{\newtext}[2]{
  \newcommand{#1}[1]{
    \textcolor{#2}{##1}
  }
}

\newtext{\nlgtext}{ForestGreen}

\newcommand{\newreviewer}[3]{
  \newcounter{#2cmt}
  \newcommand{#1}[1]{
    \stepcounter{#2cmt}
    \textcolor{#3}{[#2 Comment \arabic{#2cmt}: ##1]}
  }
}

\newreviewer{\nlgcmt}{NLG}{ForestGreen}
\newreviewer{\lrbcmt}{LRB}{Red}
\newreviewer{\akcmt}{AK}{Blue}
\newreviewer{\nlgcur}{NLG Current}{Purple}
\newreviewer{\abcmt}{AB}{Orange}

\definecolor{editorange}{HTML}{CC5500}

\newif\ifshowedits
\showeditstrue

\ifshowedits
  \NewDocumentCommand{\abedit}{+m}{{\color{editorange}#1}}
  \NewDocumentCommand{\abcaption}{om}{
    \IfNoValueTF{#1}{\caption{\abedit{#2}}}{\caption[#1]{\abedit{#2}}}
  }
\else
  \NewDocumentCommand{\abedit}{+m}{#1}
  \NewDocumentCommand{\abcaption}{om}{
    \IfNoValueTF{#1}{\caption{#2}}{\caption[#1]{#2}}
  }
\fi

\newenvironment{altthm}[1]{
  \begingroup
  \def\altthmname{#1}
  \edef\savedcnt{\csname the#1\endcsname}
  \expandafter\renewcommand\csname the#1\endcsname{\savedcnt\_alt}
  \csname #1\endcsname
}{
  \csname end\altthmname\endcsname
  \addtocounter{\altthmname}{-1}
  \endgroup
}

\title{
Principal component error in high-dimensional factor models
}
 \author{Alex Bernstein,  Lisa R. Goldberg, Nicholas Gunther, Alec Kercheval, \\ Tian Lan,  Yian Lin and Dayi Yao}
\date{First version: July 11, 2026 \\ This version: September 14, 2026}
\begin{document}
\maketitle

\begin{abstract}

In a statistical factor model, principal components (or eigenvectors) of a sample covariance matrix serve as estimates of {\it principal directions}, the true drivers of co-movement of a collection of observed variables.  
We write the often substantial error in these estimates as a sum of
two interpretable terms, which we show have almost sure asymptotic limits as the number of variables grows with sample size bounded. This scenario is commonplace in financial economics, genomics, machine learning and signal processing. {\it Out-of-subspace error} measures the distance from an estimate to the subspace spanned by  population factor exposures. It can be expressed in terms of data,  providing an estimable floor for error.  {\it In-subspace error} arises from
 the fixed sample size of the latent
 factor returns and cannot be estimated from data alone. We illustrate our error analysis with a three-factor simulation of the US public equity market, showing the dependence of the magnitude of the error and its components on dimension and sample size.  In that simulation, out-of-subspace error dominates.  Researchers who rely on principal component analysis to estimate factor models can use our results to quantify errors in model-based predictions and attributions. 

\end{abstract}

 Key words:  statistical factor model, principal direction, principal component analysis, eigenvector, estimation error, high dimension
\section{Introduction}

Factor models are ubiquitous in modern quantitative finance and empirical economics, providing a parsimonious framework to explain the cross-section of returns, manage portfolio risk, and forecast economic outcomes. In the classical setup, a $p$-vector of observable variables $y$ follows a linear structure $$y = Bf + z.$$  The  product $Bf$ of the factor exposure {(or loading)}
matrix $B$ and the factor return vector $f$ is the systematic component of $y$, while the idiosyncratic vector $z$ is the component of $y$ not explained by factors.   We assume a latent model, in which only $y$ is observed.  The central object of interest is often the space spanned by the columns of $B$ or, more specifically, its {\it principal directions}: unit-length linear combinations of the exposures $B$ that maximize explained variance. They serve as fundamental building blocks for constructing factor-mimicking portfolios, estimating risk premia, and performing asset pricing tests.

The setting of this article is the high-dimension, low-sample-size (HL) asymptotic regime, where parameters are numerous and data are scarce.
This is commonplace in finance, since  regime changes often leave only a limited time series of relevant data available for estimation of model parameters.

In this article, we analyze how well  true principal directions are estimated by leading eigenvectors of a sample covariance matrix in the HL asymptotic regime for data generated by a factor model. This type of estimation is standard practice in principal component analysis.

Our first result, Theorem~\ref{thm:main}, asserts that there is an almost sure large-$p$ asymptotic limit for this error.  Denote by $b_j$ the $j$th strongest principal direction, and by $h_j$
the $j$th eigenvector of the sample covariance
matrix $S$. Then, in the large $p$ limit when the number of observations is fixed, we show:
\begin{equation*}
    \sin^2 \angle (h_j, b_j) \xrightarrow[]{} \operatorname{OE}_j + (1- \operatorname{OE}_j) \operatorname{RE}_j
\end{equation*}
almost surely, where the out-of-subspace  error $\operatorname{OE}_j$ measures the asymptotic angle between $h_j$ and the span of the $b_j$s (or equivalently, the span of the columns of $B$), and the asymptotic rotation error $\operatorname{RE}_j$ reflects
the deviation of the sample factor covariance matrix from its population counterpart.  When data are scarce, we naturally expect estimation error to be large.  Perhaps unexpectedly, in the HL regime these errors stabilize almost surely for large $p$.

We refine this result by showing that the out-of-subspace and in-subspace errors, $\operatorname{OE}_j$ and $\operatorname{RE}_j$, are qualitatively different. Theorem~\ref{thm:obsfloor} shows that $\operatorname{OE}_j$ can be approximated in terms of eigenvalues of a sample covariance matrix.  It serves as a data-driven lower bound for the total error.  In contrast, $\operatorname{RE}_j$ cannot be estimated from data alone;  this is the content of Theorem~\ref{thm:rotrange}.

Our results have immediate utility for a practitioner fitting a latent factor model to data. For example, we can answer these questions:
\begin{itemize}
 \item How many observations are required to provide reasonable confidence that  estimated principal directions are accurate enough for a particular application?
\item  Conversely, for a given number of observations, how large are the errors in estimated principal directions?
\item  How accurately have we identified the span of the factor exposures $B$?
\end{itemize}
Answers to these questions may
inform the choice between a simpler model that
assumes iid factor and specific returns
versus a more complex model that allows us to access a longer history.

The remainder of the paper is organized as follows. Section~\ref{sec:context} provides context for our work with an emphasis on the contributions that we built upon.   Section \ref{sec:fac-mod} presents the factor model and defines the estimation targets and principal-direction coordinates, which serve as the foundation of our analysis. Section \ref{sec:data} describes the data, scaling, and assumptions. The decomposition of error into out-of-subspace and in-subspace errors is  set up in Section \ref{sec:gram}, and stated and proved in Section \ref{sec:main-thm}. Section \ref{sec:data-driven} discusses
the characteristics of the two error terms. The out-of-subspace error is {\it estimable}, meaning that it can be estimated from data, and its estimator has the same asymptotic limit as the error.  The in-subspace error is {\it non-estimable} in the sense that it cannot be estimated from data.
 Section \ref{sec:simulation} presents simulation results illustrating the convergence properties of the errors.  The technical convergence theory supporting the proofs is developed in Appendix~\ref{sec:lemmas}, while the proof of in-subspace non-estimability appears in
 Appendix~\ref{sec:orbit}.
 Table~\ref{tab:matrices} summarizes some of the symbols we use and appears in Appendix~\ref{sec:tables}.

 \paragraph{Acknowledgements.}
 We thank Sungkyu Jung  for helpful comments and and Kristen Ho for research support.

 \paragraph{Acknowledgement of use of AI tools.} This article was written by humans for humans. The authors acknowledge the assistance of the AI tools Claude, ChatGPT, and DeepSeek during the research and writing process.  These tools were used to explore ideas, as a source of preliminary suggestions (but not decisions) for  variations and draft exposition, as well as assistance with  notation, formatting tables, and other clerical tasks. The authors are solely and completely responsible for all writing and results in this article.

 \section{Related literature and new contributions}\label{sec:context}

\subsection{Factor and spiked models}

Statistical factor models and their close cousins, spiked models (or spiked covariance models) make high dimensional data tractable.  Modern results on both types of models are asymptotic in the number of variables, $p$.

  A spiked model is often specified in terms of the spectral structure of a population covariance matrix, by separating a few larger (spiked) eigenvalues from smaller (bulk) eigenvalues. The number of spiked eigenvalues stays fixed as $p$ grows. In some settings, such as \cite{johnstone2001}, \cite{paul2007} and \cite{Johnstone2009}, all eigenvalues stay bounded as $p$ grows.  In other settings the spiked eigenvalues grow like $p^\alpha$ with $\alpha>0$ while the bulk eigenvalues stay bounded \citep{jung2009, jung2012, yata2012, shen2016, wang2017}; \cite{johnstone2018} give an orientation that contrasts the two. We focus on the latter case, with $\alpha=1$, since that setting corresponds to empirical models in financial economics and other disciplines.

In contrast to a spiked model, a factor model is specified in terms of a data generating process that explicitly separates variables into factor and specific components. 
  When the factors are pervasive in the sense  that their exposures are not too concentrated (Assumption~\ref{asm:gram}), linear growth with $p$ of the spike eigenvalues is a consequence. 
Factor models are ubiquitous in financial economics.  Important references include \cite{sharpe1963}, \cite{rosenberg1974}, \cite{ross1976}, \cite{Fama1992} and \cite{Fama1993}.  Our factor models are statistical (or principal component analysis (PCA)) models, but there are other types, as summarized in \cite{connor1995}.

In Section~\ref{sec:data}, we show step-by-step how a  factor model gives rise to a spiked model with heterogeneous (non-isotropic) specific variance. Consequently, we freely use terminology from both the spiked model and factor model literature, factor return and signal, specific return and noise, throughout this paper.

\subsection{Asymptotic regimes for estimation}

Much of the modern literature on factor and spiked models works in regimes in which the number of variables $p$ and the number of observations $n$ tend to infinity together.  In the high dimension, high sample size (HH) regime of random matrix theory, $p$ and $n$ grow in proportion; this framework has roots in \cite{wigner1955}, \cite{wigner1958} and \cite{marcenko1967}, and its consequences for PCA are surveyed in \cite{johnstone2018}.  The econometric literature on large factor models, notably \cite{bai2002} and \cite{bai2003}, surveyed in \cite{bai2008}, lets $p$ and $n$ diverge jointly subject only to mild rate restrictions such as $\sqrt{p}/n\to0$, without a proportional limit, and relies on laws of large numbers rather than on random matrix theory.  A third, intermediate setting, in which $n\to\infty$ but $p/n\to\infty$ is permitted, is treated in \cite{wang2017} and in Section~5.1 of \cite{shen2016}.

In this paper, we use techniques from the high dimension low sample size asymptotic regime (HL, also called HDLSS), where the dimension $p$ tends to infinity but the sample size $n$ is bounded  \citep{hall2005, ahn2007, jung2009, shen_shen_marron2016, shen2016, jung2018, aoshima2018}.   
This relatively recent approach models settings where the number of variables is large, but the number of available or relevant observations is limited. 
This situation comes about, for example,  in financial markets with a large number of assets where only a short data history is relevant to covariance forecasts.
As we show, there are insights from the HL regime that do not emerge in the HH regime.  

Comparisons of the regimes are discussed in \cite{shen2016}, \cite{aoshima2018}, \cite{johnstone2018} and \cite{goldberg2023}.

\subsection{Antecedents to the results in this article}

Early  work in financial economics considers factor models of returns of securities  whose number grows without bound, and with parameter estimates based on a finite time series.
\cite{ross1976} developed the arbitrage pricing theory and  \cite{chamberlain1983} introduced the concept of approximate factor models in finance. \cite{connor1986}, \cite{connor1988} and \cite{connor1993}, developed {\it asymptotic principal components}, weighted combinations of right singular vectors of data matrices, to generate consistent estimators of the space spanned by the factor returns. While these works predate the formal introduction of the HL regime, they pioneer some of its technical tools.  An example is {\it Gram reduction}, which exploits the fact that for a $p \times n$ data matrix $Y$, $Y Y^\top/pn$ and $Y^\top Y/pn$  have the same non-zero eigenvalues. With $n$ fixed and $p$ tending to $\infty$, we can take limits in the latter to learn about the former.  We use this technique extensively in this article.

\cite{wang2017} work in a regime where $n, p \to \infty$ with $p/(n\lambda)$ bounded for each spiked eigenvalue $\lambda$, with i.i.d.\ sub-Gaussian observations. Their decomposition of a sample eigenvector into in-subspace and out-of-subspace components is the split underlying Theorem~\ref{thm:main}. Their Theorem~3.2 gives large-$n$ counterparts of our two error terms: the out-of-subspace component converges in probability to a deterministic limit, while the in-subspace component is asymptotically Gaussian at scale $n^{-1/2}$. Their remark that the two components ``intertwine in such a way that correction for the biases of estimating eigenvectors is almost impossible'' anticipates, informally, our Theorem~\ref{thm:rotrange}. In the HL regime with $n$ fixed, neither term is deterministic and only the first is estimable.

A series of articles on James--Stein shrinkage correction of sample eigenvectors serve as precursors to the present article. \cite{goldberg2022, shkolnik2022, gurdogan2022, goldberg2023, goldberg2020, goldberg2025, yoon2025} include analysis of the leading sample eigenvector for the case of a one-factor model, with various stationarity and independence assumptions.  Explorations of the multi-factor case are in \cite{goldberg2020}, \cite{gurdogan2024} and \cite{shkolnik2025}.  These articles are motivated by the well-known problem that estimation error in a covariance matrix leads mean-variance optimizers to underforecast the risk of optimized portfolios \citep{michaud1989, elkaroui2010, elkaroui2013}, a problem that \cite{bianchi2017} document for latent factor models in particular.  They show that while the leading sample eigenvector is not a consistent estimator of the population counterpart, there is a consistent estimator of the eigenvector {\it error}.  This estimator emerges from a strong law of large numbers 
under bounded fourth moments, and allows for James--Stein type corrections to estimated eigenvectors in the HL regime. While it is not stated explicitly in the references,  James-Stein shrinkage in HL is related to the phenomenon of concentration of measure \citep{talagrand1995}. This line of thinking implicitly underlies some of the phenomena analyzed in this article.

Theorem 3.5(c) 
of \cite{gurdogan2024} anticipates, in a different formulation with different background assumptions, our Corollary~\ref{cor:subspace}. Our Theorem~\ref{thm:rotrange} is a noise-inclusive counterpart of their Theorem~4.5, which shows in a noise-free model that there is no measurable consistent estimator of the matrix of inner products between sample and population eigenvectors.  See also \cite{gurdogan_shkolnik2026}.

We also build on \cite{jung2012}, \cite{jung2022}, and \cite{shen2016}.
  These three papers
 study spiked HL models for the case in which the spikes grow linearly in the dimension, which is the relevant case for factor models.

\cite{jung2012} provide a joint limiting distribution of the inner products between sample and population eigenvectors in terms of lower dimensional dual Gram matrices similar in spirit to ours.  Their Theorem~2(ii) already contains the structure of our decomposition: in our notation their setting is $G_B=I$ with $\Sigma_f$ diagonal, and their limit factors as the product of a subspace term, $n\lambda_j/(n\lambda_j+\delta^2)$, and the squared cosine of the angle between the $j$th eigenvector of the realized score Gram matrix and the $j$th coordinate axis.  What the factor model context adds is the general case in which $G_B$ is not a multiple of the identity (for which the rotation is no longer simply the angle between eigenvectors of $FF^{\!\top}/n$ and those of $\Sigma_f$), and the separation of the two terms by their estimability.  We obtain almost sure convergence conditional on the path $F$, rather than unconditional convergence in distribution, allowing a path-by-path analysis.  \cite{jung2012} require the $n$ observations to be independent and identically distributed, but allow weak dependence ($\rho$-mixing) among the standardized coordinates. Our Assumption~\ref{asm:noise} requires cross-sectional independence of the specific return $Z$ conditional on the factor path $F$, but leaves the joint law of $F$ unrestricted beyond its second moments, allowing factors to be serially dependent and non-stationary, as appropriate to financial data.
Neither paper requires the model variables to be Gaussian, sub-Gaussian, sub-exponential, or to belong to any restricted class other than having bounded fourth moments.

A parallel development in \cite{jung2022} applies the
 machinery of \cite{jung2012} to the bias of sample
and prediction principal-component {scores}, which are scaled right eigenvectors of a data matrix.  When that matrix is governed by a spiked model, the scores can be decomposed into a
rotation and an estimable scaling factor, with correction for the latter. Its
eigenvector-angle byproduct matches the structure of our
Theorem~\ref{thm:main}, but is obtained in probability
under i.i.d.\ sampling from a single covariance, whereas we allow for serial correlation and obtain a
path-conditional almost-sure limit.

\cite{shen2016} study a spiked covariance model in the HL regime. Their Theorem~5 establishes almost sure limits for the angles between sample eigenvectors and the subspace spanned by the spiked population eigenvectors, strengthening the convergence in distribution of \cite{jung2012} at the price of stronger distributional assumptions.   They observe that with $n$ fixed the sample eigenvectors within the spike group ``can't be asymptotically distinguished,'' and for this reason measure angles to the subspace only.  Theorem~\ref{thm:rotrange} below makes that observation exact.

\subsection{Our contributions}

 The present article provides an HL analysis of principal direction estimation error in a latent factor model with a general factor covariance $\Sigma_f$. Factor models are typically $\alpha=1$ spiked models, and the  additional factor structure facilitates useful refinement and interpretability, including a  conditional analysis on paths of factor returns.
Our results apply even when returns are serially dependent and non-stationary.
  They are organized into three theorems, described next.

Theorem~\ref{thm:main} decomposes error in an estimated principal direction into interpretable pieces and provides almost sure limits in the number of variables $p$ for each, when sample size $n$ is fixed.   There are two essential, limiting components.  The first is out-of-subspace error $\operatorname{OE}$, which measures the distance of a principal component to the subspace spanned by the factor exposures.  A spiked model analog of our result, {when there is no factor structure,} is \cite[Theorem 5]{shen2016}. The second component is rotation error $\operatorname{RE}$, which
reflects
the deviation of the {fixed-$n$} sample factor covariance matrix from its population counterpart.  
Rotation error is implicit in the joint limits of \cite[Theorem 2]{jung2012}, appears as the rotation part of the score bias in \cite{jung2022}, and is the reason \cite{shen2016} measure angles only to the spiked subspace.  We are unaware of a treatment that isolates it as a term with a path-conditional almost sure limit, or that addresses its estimability.

Theorem~\ref{thm:obsfloor} provides a data-driven estimate of out-of-subspace error $\operatorname{OE}$ in terms of eigenvalues of a sample covariance matrix. 
Our easy-to-compute estimate has the same almost sure limit as the out-of-subspace error, itself.  It yields a computable floor for principal component error in a statistical factor model. 
With more than one factor, there is no longer a consistent estimator of the angles between the
sample and population eigenvectors.  This is the content of Theorem~\ref{thm:rotrange},  a   
non-estimability
 theorem  establishing that rotation error $\operatorname{RE}$ cannot be estimated from data alone.

\section{The factor model and the estimation target} \label{sec:fac-mod}

A random $p$-vector $y = y^{(p)}$ follows a $k$-factor linear model
\begin{equation}\label{eq:lfm}
  y =Bf+z,
\end{equation}
with $p\gg k$, deterministic loadings $B=B^{(p)}\in\R^{p\times k}$ of rank
 $k$, a random  $k$-vector of factor returns $f$ (that has no dependence on $p$), and a random specific (or idiosyncratic) $p$-vector $z=z^{(p)}$.  Only $y$ is observed.  We take $f$ and
$z$ to have mean zero and finite second moments, with $z$ uncorrelated with
$f$. Write $\Sigma_f=\E[ff^{\!\top}]\in\R^{k\times k}$ (positive definite),
$\Delta_z=\E[zz^{\!\top}]\in\R^{p\times p}$ (positive definite), and $\Sigma_y = E[yy^\top]$.  Then
\begin{equation}\label{eq:cov}
  \Sigma_y=B\Sigma_f B^{\!\top}+\Delta_z=\Sigma_0+\Delta_z,
  \quad \text{ where }\,
  \Sigma_0:= \operatorname{Cov}(Bf) = B\Sigma_f B^{\!\top}\ \ (\text{rank }k).
\end{equation}

Because $B$ and $f$ enter \eqref{eq:lfm} only through their product $Bf$, and because that product does not have a unique factorization, the components $B$ and $f$ are not separately identifiable.
Instead we study the eigenstructure of the {\em scaled systematic covariance matrix}  $\Sigma_0/p$.
Assuming its $k$ nonzero eigenvalues are distinct (a result of Assumption \ref{asm:sep} and Proposition \ref{prop:chart}), the spectral decomposition theorem gives
\begin{equation}\label{eq:sdt}
  \Sigma_0=b\,\Delta_0\,b^{\!\top},
\end{equation}
where $\Delta_0 = \operatorname{diag}(p\mu_1^{(p)},\dots,p\mu_k^{(p)})$ is the $k \times k$ diagonal matrix of the $k$ leading
eigenvalues of $\Sigma_0$ in decreasing order and
$b=[\,b_1\ \cdots\ b_k\,]\in\R^{p\times k}$ has
orthonormal unit eigenvectors as columns.  The $b_j$s are the
 {\it principal directions} of the factor covariance matrix, unique up to sign, and they are the targets of estimation, with
\begin{equation}\label{eq:borth}
  b^{\!\top}b=I_k,\qquad \col(b)=\col(B)=:\mathcal B.
\end{equation}
  We can rewrite  factor model (\ref{eq:lfm}) as
\begin{align}\label{eq:lfmpd}
y = b \phi + z.
\end{align}
This is worked out in Lemma \ref{lem:basis} of Appendix~\ref{sec:lemmas}, which provides the linear transformation of $B$ to $b$ and $f$ to $\phi$.
The $k$-vector $\phi$ of transformed factor returns
depends on $p$ while the original $k$-vector $f$ does not.  Lemma~\ref{lem:basis} also shows the covariance matrix of $\phi$ is
  the diagonal matrix
  \begin{equation}
       \Sigma_\phi := \E[\phi \phi^\top] = \Delta_0.
  \end{equation}
 Thus the systematic contribution $b\phi=\sum_{j}\phi_j b_j$ decomposes along the orthonormal
principal directions $b_j$ with uncorrelated coefficients $\phi_j$ of variance
$p\mu_j^{(p)} = (\Delta_0)_{jj}$, and we
can write
\[
\Sigma_y = b \Sigma_\phi b^\top + \Delta_z .
\]

 Estimates of principal directions $b_j$ and their associated variances $p\mu_j$ are the building blocks of financial risk models,  used for portfolio construction and return attribution by quantitative investors. These estimates are typically eigenvectors and eigenvalues of sample covariance matrices generated by principal component analysis.  We quantify the accuracy of these estimates in the results that follow.

\section{Model and Data Assumptions  }\label{sec:data}

We fix the number $k \geq 1$ of factors and consider a sequence of models \eqref{eq:lfm} indexed by an increasing dimension $p$.
This sequence  supports our asymptotic analysis and formalizes the intuitive process of adding more and more variables to an existing pool of variables
without changing the ones previously added.
Therefore we may think of the loading matrix sequence $B^{(p)}$ as {\it nested}: $B^{(p)}$ is obtained as the first $p$ rows of a fixed infinite array $\{B_{ij}: i \geq 1, \, 1 \leq j \leq k \}$.

 Up to this point, all quantities are {\em population} quantities that may depend on $p$ but not on any observations. We now introduce {\em sample} or {\em realized} quantities based on a finite number of observations.  Suppose we have $n$ observations
$y^{(p,l)}, l = 1,\dots,n$, each determined
by $n$ samples $(f^{(l)},z^{(p,l)}) \in \mathbb{R}^{k}\times\mathbb{R}^{p}$, $l=1,\dots,n$, according to the model \eqref{eq:lfm}. Each sample pair satisfies the baseline assumptions of
Section~\ref{sec:fac-mod}: mean zero, and $z^{(l)}$ uncorrelated with
$f^{(l)}$, but, subject to the requirements of Assumptions~\ref{asm:fm}--\ref{asm:delta} below,
the samples need not be
independent, nor identically distributed across samples.  

The observations $y^{(p,1)},\dots,y^{(p,n)}$ are collected as the columns of a
$p\times n$ data matrix,
\begin{equation}\label{eq:data}
  Y=Y^{(p)}=b\,\Phi+Z,
\end{equation}
where $\Phi=\Phi^{(p)}\in\R^{k\times n}$ holds the $n$ realizations of
$\phi$  (derived from the realized factor path
$F = [f^{(1)},\dots,f^{(n)}] \in\R^{k\times n}$ via Lemma~\ref{lem:basis}) and $Z=Z^{(p)} = [z^{(p,1)},\dots, z^{(p,n)}] \in\R^{p\times n}$ the realizations of
$z$. Since the sample size $n$ is fixed throughout most of this article,  we suppress dependence on $n$ in most of our notation.  Exceptions occur 
in Subsection~\ref{subsec:varyn} and are clearly marked.

As with the loading matrix, we assume the noise cross-sections are nested: for each observation $l$, $1 \leq l \leq n$, there is a single
infinite sequence of specific returns $z_1^{(l)},z_2^{(l)},\dots$, and $z^{(p,l)}$
consists of its first $p$ elements.  Equivalently, the noise realizations
form one infinite array $\{Z_{il}: i\ge1,\ 1\le l\le n\}$ from which the
$p\times n$ noise matrix $Z^{(p)}$ at cross-section size $p$ is obtained; growing $p$
adds coordinates without changing the law
of those already present.

The asymptotics are taken in $p$ with $n>k$ fixed, conditional on a
realization $F \in \R^{k\times n}$ that does not change with $p$.
The estimators are  leading sample eigenvalues and eigenvectors   of the
\emph{scaled sample covariance matrix}
\begin{equation}\label{eq:S}
S =   S^{(p)}:=\frac{YY^{\!\top}}{np}\in\R^{p\times p},
\end{equation}
    whose $j$th unit eigenvector we denote $h_j=h_j^{(p)}$ (eigenvalue
$\theta_j^{(p)}$), for $j=1,\dots,k$, in decreasing order   (the $1/p$
scaling keeps the leading eigenvalues
bounded, via the assumptions below).
We compare $h_j$ with its corresponding principal direction
$b_j$ through $\sin^2\!\angle(h_j,b_j)$.

To avoid any angle ambiguity due to sign flips, we adopt the definition
\begin{equation}\label{eq:angledef}
\angle(u,v):=\arccos\frac{|\langle u,v\rangle|}{\|u\|\,\|v\|}
   \;\in\;[0,\tfrac{\pi}{2}]
\end{equation}
for any nonzero $u,v\in\R^m$,
denoting the acute angle between the {lines} spanned by $u$ and $v$.  This is
the intrinsic notion for eigenvectors, which are determined only up to
sign.
More generally, for a nonzero $u\in\R^m$ and a subspace $\mathcal V\subseteq\R^m$ with
orthogonal projector $\Pi_{\mathcal V}$, we   write
\begin{equation}\label{eq:angledefsub}
  \angle(u,\mathcal V):=\arccos\frac{\|\Pi_{\mathcal V}u\|}{\|u\|}
   \;\in\;[0,\tfrac{\pi}{2}]
\end{equation}
for the angle between $u$ and its closest direction in $\mathcal V$.

\medskip

\begin{assumption}[Factor returns]\label{asm:fm}
All random objects are defined on a common probability space
$(\Omega,\mathcal F,\mathbb P)$.  The factor path
$F=(f^{(1)},\dots,f^{(n)})$ is a random element of $\R^{k\times n}$
satisfying, for each $l=1,\dots,n$,
\[
  \E\|f^{(l)}\|^2<\infty,\qquad \E f^{(l)}=0,\qquad
  \E\big[f^{(l)}f^{(l)\top}\big]=\Sigma_f,
\]
with $\Sigma_f\in\R^{k\times k}$ positive definite and not depending on $l$.
No further restriction is imposed on the law of $F$: the marginal laws of
the $f^{(l)}$ may differ, and the cross-moments
$\E[f^{(l)}f^{(m)\top}]$, $l\ne m$, are unrestricted, so the factors may be
serially dependent and conditionally heteroskedastic.
\end{assumption}
\medskip

\begin{remark}
Assumption~\ref{asm:fm} constrains only the second moments of the marginals
of $F$; it is what makes $\Sigma_0=B\Sigma_fB^{\!\top}$, and hence the
targets $b_1,\dots,b_k$ of \eqref{eq:sdt}, well defined.  It is used
nowhere in the asymptotics, which are conditional on the realized path.

\end{remark}

\medskip

\begin{assumption}[Specific returns]\label{asm:noise}
Conditional on the factor path $F=(f^{(1)},\dots,f^{(n)})$, the entries of
the specific returns array $\{Z_{il}: i\ge1,\ 1\le l\le n\}$ are mutually independent
and mean zero, with conditional variances
$\Var(Z_{il}\mid F)=\delta_{i,l}^2>0$ not depending on $F$, and uniformly
bounded conditional fourth moments
$\sup_{i,l}\E[Z_{il}^4\mid F]\le\kappa_4<\infty$ a.s.
  By
Jensen's inequality $\delta_{i,l}^2\le\sqrt{\kappa_4}$, so in particular
$\sup_{i,l}\delta_{i,l}^2<\infty$.
\end{assumption}
\medskip

\begin{remark}
{
Assumption~\ref{asm:noise} conditions on the whole factor path
rather than on the contemporaneous factor, and therefore already forces the uncorrelated condition
$\E[z^{(l)}f^{(m)\top}]=0$ for \emph{every} pair $l,m$.
It also fixes the per-date observation covariance $\Sigma_y^{(l)} = \Sigma_0 + \Delta_z^{(l)}$,
leaving the
systematic covariance $\Sigma_0=B\Sigma_fB^{\!\top}$, and hence the targets
$b_j$, common to all $l$.  Both statements are established in Lemma~\ref{lem:uncorr} of the Appendix. }
\end{remark}
\medskip

\begin{assumption}[Average specific variance]\label{asm:delta}
For every $l=1,\dots,n$,
$\dfrac1p\sum_{i=1}^p\delta_{i,l}^2\longrightarrow\delta^2>0$ as
$p\to\infty$, with the same limit $\delta^2$ for each $l$.
\end{assumption}
\medskip

\begin{assumption}[Loading Gram convergence]\label{asm:gram}
$\dfrac{B^{\!\top}B}{p}\longrightarrow G_B$ as $p \to \infty$, for some positive definite
$G_B\in\R^{k\times k}$.
\end{assumption}
\medskip

\begin{remark}When the entries of $B$ are uniformly bounded, as in typical financial applications, Assumption~\ref{asm:gram} 
implies that the loadings on each factor are
{\em pervasive} in the sense that a positive fraction of the loadings are bounded away from zero.  This implication is useful in the estimation of statistical models from empirical data, where the factor exposures $B$ are not identifiable.
\end{remark}
\medskip

\begin{assumption}[Population separation]\label{asm:sep}
The deterministic $k \times k$ symmetric matrix $K = \Sigma_f^{1/2}G_B\Sigma_f^{1/2}$ has distinct positive
eigenvalues $\mu_1>\cdots>\mu_k>0$.  (These are the limiting leading
eigenvalues of $\Sigma_0/p$, $\mu_j^{(p)}\to\mu_j$; see
Proposition~\ref{prop:gramlim}.)
\end{assumption}
\medskip

\begin{assumption}[Regular event]\label{asm:reg}
The $n \times n$ symmetric matrix $ F^{\!\top}G_BF/n$ has $k$ distinct positive
eigenvalues $\lambda_1>\cdots>\lambda_k>0$.

\end{assumption}

Assumption~\ref{asm:fm} controls the allowed factor returns; Assumptions~\ref{asm:noise}--\ref{asm:delta} control the noise;
\ref{asm:gram}--\ref{asm:sep} are population conditions on the systematic components;
\ref{asm:reg} is a genericity condition on the realized factor returns path $F$.

According to Assumptions \ref{asm:fm} and \ref{asm:noise}, the samples $y^{(l)}$ may be serially dependent through the factors, and need not be identically distributed. Beyond its first two conditional moments, the conditional distribution of $Z_{il}$ may vary with the coordinate $i$, the observation $l$, and the whole factor path $F$, subject to the stated constraints; by the nesting convention preceding \eqref{eq:data} it does not vary with $p$. In particular,
a volatility regime that differs across the $n$ dates is admissible, provided each cross-sectional  noise variance settles at the same level $\delta^2 >0$.

 Our first three assumptions aim to balance realism and parsimony in the specification of our factor model.  We allow serially correlated and heteroskedastic factor returns $f^{(l)}$ (Assumption~\ref{asm:fm}), two empirically verified features of daily factor returns in US equity markets. As long as $f$ has a fixed unconditional covariance structure $\Sigma_f$ (Assumption~\ref{asm:fm}) and the noise $z$ has a fixed cross-sectional average variance $\delta^2$ (Assumption \ref{asm:delta}, which is weaker than stationarity),
we can consider the data to be representative of a single regime. In contrast,  our conclusions may not hold if the data history incorporates a shock common to factor returns and specific variances. We rule out a regime-shifting event of this type (Assumption~\ref{asm:noise}).  It is the need for data to be drawn from a single regime that limits our estimation universe.

Our conclusions hold in some more general settings.

For example, we conjecture that the assumption of
cross-sectional independence in Assumption \ref{asm:noise} may be relaxed to allow for weak dependence, and the assumption of common cross-sectional specific variance across dates in Assumption~\ref{asm:delta} may be relaxed.  We do not pursue these here.

Assumption~\ref{asm:gram} is a prevalence condition implying that for each factor $j$, a positive limiting fraction of assets have non-negligible exposure to that factor. Assumptions~\ref{asm:sep} and~\ref{asm:reg} are generic conditions allowing us to line up the corresponding limiting eigenvalues, as described in the proof. It is straightforward to show that Assumption~\ref{asm:reg} holds almost surely under any joint law of the path $F$ with a density on $\R^{k\times n}$.

Throughout, statements of the form ``conditional on $F$ and almost surely
as $p\to\infty$'' are to be read as follows: for almost every realization
$F$ satisfying Assumption~\ref{asm:reg}, the stated convergence holds with
probability one under the conditional law of the noise array
$\{Z_{il}\}$ given $F$ --- the law directly regulated by
Assumption~\ref{asm:noise}.  That Assumption
requires that the conditional mean and variance of each $Z_{il}$ not depend on the path $F$, but permits the rest of its conditional distribution, such as higher moments, to vary with the realization of $F$ if desired. We note that our path-conditional statements upgrade automatically to unconditional ones by the tower property, as long as $F$ satisfies Assumption~\ref{asm:reg} a.s.

\section{Gram reduction} \label{sec:gram}

In the context of a positive semidefinite matrix $M$ of the form $M = AA^\top$, we call
the matrix $A^\top A$ a {\em dual Gram matrix} of $M$, which represents the matrix of inner products of the columns of $A$.
Since $M$ does not have a unique factorization $M = AA^\top$, a dual Gram of $M$ depends on the choice $A$. 
If $M = AA^\top$ and $A$ has more rows than columns, the corresponding dual Gram $A^\top A$ will have smaller size than $M$. We then call the passage from $AA^\top$ to $A^\top A$ {\em Gram reduction}.
In this section we define certain dual Gram matrices in terms of particular factorizations that we specify.

To study the $p$-asymptotic limit, we handle the growing dimension of the $p \times p$ scaled sample covariance matrix $S^{(p)}$ by passing to a fixed size $n \times n$ dual Gram $W^{(p)}$ with limit $W$ (see below), and its  systematic component $W_0$. While this allows us to study the sample asymptotics, it is not sufficient for analyzing estimation error because the population principal coordinate vectors $b_1,b_2,\dots,b_k$ are unrelated to $n$. To overcome this obstacle, we make a further Gram reduction of the rank $k$ matrix $W_0$ to the $k \times k$ matrix  $N$. Eigenvectors of $N$ are realized systematic directions in $k$-dimensional principal coordinates, which can then be directly compared to the population directions.  This setup is described in more detail here. See Table~\ref{tab:matrices} for a summary of notation.

\paragraph{The $n\times n$ dual $W^{(p)}$.}  
The matrices $S^{(p)} = YY^{\!\top}/(np) \in \mathbb{R}^{p \times p}$ and
\begin{equation}\label{eq:Wdef}
  W^{(p)}:=\frac{Y^{\!\top}Y}{np}\in\R^{n\times n}
\end{equation}
share their nonzero eigenvalues (Lemma~\ref{lem:gramdual}); $W^{(p)}$ has
the same fixed size $n$ at every $p$.  Its $(l,m)$ entry is the scaled inner
product $\langle Y_{\cdot l},Y_{\cdot m}\rangle/(np)$ of two observations.
Substituting $Y=BF+Z$,
 this converges (Proposition~\ref{prop:dual}), as $p \to \infty$, to
\begin{equation}\label{eq:Wlim}
  W:=\underbrace{\frac{F^{\!\top}G_BF}{n}}_{=:W_0}
       +\frac{\delta^2}{n}\,I_n ,
\end{equation}
a systematic dual Gram $W_0$ plus an \emph{isotropic} noise term: independent noise across
the $p$ coordinates averages to the same variance $\delta^2/n$ at each
observation.  The noiseless part $W_0$ has rank $k$; write its
$j$th eigenpair as $(\lambda_j,w_j)$, with the $\lambda_j$ of
Assumption~\ref{asm:reg}.  Adding $(\delta^2/n)I_n$ shifts every eigenvalue
by $\delta^2/n$ and leaves unchanged the eigenvectors, so the top $k$ eigenpairs of
$W$ are $(\lambda_j+\delta^2/n,\,w_j)$, separated from the
remaining eigenvalue $\delta^2/n$ (multiplicity $n-k$) by the distance (eigenvalue gap)
$\lambda_k>0$.

\paragraph{The $k\times k$ realized systematic dual Gram $N^{(p)}$ in principal coordinates.}
With the noiseless data $BF = b\Phi$, the rank $k$ scaled sample systematic covariance is
$$S_0^{(p)} = BFF^\top B^\top /(np) = b\Phi\Phi^{\!\top}b^{\!\top}/(np)=b\,N^{(p)}\,b^{\!\top},$$
where
\begin{equation}\label{eq:Ndef}
  N^{(p)}:
   =\frac{\Phi\Phi^{\!\top}}{np} = b^\top S_0^{(p)} b \in\R^{k\times k}
\end{equation}
is the realized scaled sample systematic covariance expressed in principal coordinates on $\R^k$. (See Section~\ref{sec:main-proof} for more on the principal coordinate chart.)

Defining $W_0^{(p)} := F^{\!\top}(B^{\!\top}B/p)F/n$,
the dual of
$N^{(p)}$
is
\begin{equation}
\Phi^{\!\top}\Phi/(np)= W_0^{(p)} \to W_0 \in \mathbb{R}^{n \times n}
\end{equation}
by
 Proposition~\ref{prop:gramlim},
and $N^{(p)}$ and $W_0^{(p)}$ share the
nonzero spectrum $\{\lambda_j^{(p)}\}$ by Lemma \ref{lem:gramdual}.  Proposition~\ref{prop:gramlim} establishes the
limit
\begin{equation}\label{eq:Nlim}
  N^{(p)}\longrightarrow N,\qquad
  N\text{ has $j$th unit eigenpair }(\lambda_j,\,\nu_j).
\end{equation}
Letting $\bar\Phi := \Phi/\sqrt p$ and
$\bar\Phi^{\infty} := \limp\bar\Phi$,
Corollary~\ref{cor:pcdual} establishes the
duality link
$\;\sqrt{n\lambda_j}\,\nu_j
=\bar\Phi^{\infty}w_j$
connecting eigenvectors of $W_0$, hence $W$, with those of $N$.  Proposition \ref{prop:gramlim} also provides explicit expressions for $\bar\Phi^\infty$ and $N$ in terms of the model parameters.

The direction $\nu_j\in\R^k$ is the limiting realized $j$th systematic principal direction
\emph{in principal coordinates}.
In the infinite sample size limit
($n=\infty$, $FF^{\!\top}/n=\Sigma_f$), one would have
$N=\operatorname{diag}(\mu_1,\dots,\mu_k)$ (Corollary~\ref{cor:n-limit}),
whose eigenvectors are the axes $e_j$.  Finite $n$ sample error rotates $\nu_j$ off
the axis $e_j$, and that rotation is responsible for the in-subspace error below.  (See also the remark following Proposition~\ref{prop:gramlim}.)

To gain some intuition about the angle between $\nu_j$ and $e_j$, consider the special case $G_B = I$, where the factor loadings matrix $B$ is already orthonormal.
Let $\Lambda$ denote the diagonal matrix of eigenvalues of $\Sigma_f$ in decreasing order, and write
$\hat \Sigma_f = FF^\top/n$.

It then follows from Proposition~\ref{prop:gramlim} in the Appendix and the subsequent remark that  there is an orthogonal $k \times k$ matrix $Q$ such that
\[
Q \hat \Sigma_f Q^\top = N \text{ and }
Q \Sigma_f Q^\top = \Lambda.
\]
Since $\nu_j$ is the $j$th eigenvector of $N$ and $e_j$ is the $j$th eigenvector of $\Lambda$, the angle $\angle(\nu_j, e_j)$ is equal to the angular discrepancy between corresponding eigenvectors of the population covariance $\Sigma_f$ and its sample counterpart $\hat \Sigma_f$.  The presence of this angle is clearly a finite-$n$ phenomenon.  Without the
assumption $G_B = I$, $Q$ remains invertible but need no longer be orthogonal.

\section{Errors in principal directions}  \label{sec:main-thm}

The $j$th principal component $h_j \in \mathbb{R}^p$ {is a sample quantity that} is an
imperfect estimator of the population principal direction $b_j$. We can measure the difference
by the angle $\angle_p (h_j, b_j)$, or, equivalently,
$\sin^2 \angle_p (h_j, b_j)$. 
Here, we use the angle symbol $\angle_p$ with a subscript in expressions involving a $p$ limit to
emphasize that the angle is formed
in $\mathbb{R}^p$ for each $p$. Henceforth an undecorated $\angle$ will refer to angles in the fixed spaces $\mathbb{R}^n$ or $\mathbb{R}^k$.

It may come as a surprise that the sample error has a $p$-asymptotic limit for fixed sample size $n$. This boils down to the fact, in high dimensions, that independent noise vectors are nearly orthogonal and nearly equal in length after scaling, a phenomenon that is explained by concentration of measure.
Theorem \ref{thm:main} below decomposes 
 the $p$-asymptotic limit of the fixed-sample-size error
into terms that can be further analyzed.  The proof appears in section \ref{sec:main-proof}, making use of results collected in Appendix \ref{sec:lemmas}. Section \ref{sec:data-driven} takes up analysis of the terms.  The reader may consult Table~\ref{tab:matrices} in Appendix~\ref{sec:tables} for a useful summary and reminder of our notation.

\subsection{Error Decomposition}

Our results rely on orthogonal projection $\Pib:=bb^{\!\top}$ of $\R^p$ onto $\mathcal B=\col(b)$. Projection of $h_j$ onto $\mathcal B$ and $\mathcal{B}^\perp$ yields an exact split of principal direction error at every $p$:
\begin{equation}\label{eq:thmsplit}
  \sin^2\!\angle_p(h_j,b_j)
  \;=\;
  \sin^2\!\angle_p(h_j,\mathcal B)
  \;+\;
  \cos^2\!\angle_p(h_j,\mathcal B)\;\sin^2\!\angle_p\big(\Pib h_j,\,b_j\big),
\end{equation}
where the second summand is read as $0$ if $\Pib h_j=0$.

Let $(\lambda_j, \nu_j)$ denote the $j$th eigenpair of the $k \times k$ limiting systematic dual Gram matrix $N$ as defined in the previous section.
\medskip

\begin{theorem}[Error decomposition]\label{thm:main}
Let Assumptions~\ref{asm:fm}--\ref{asm:reg} hold.  For  $j\in\{1,\dots,k\}$:

\medskip\noindent\emph{(i) Principal direction error converges for large $p$.} Conditional on $F$ and almost
surely as $p\to\infty$,
\begin{equation}\label{eq:thm}
  \sin^2\!\angle_p(h_j,b_j)
  \;\longrightarrow\;
  \underbrace{\frac{\delta^2}{n\lambda_j+\delta^2}}
            _{\text{out-of-subspace error }}
  \;+\;
  \underbrace{\frac{n\lambda_j}{n\lambda_j+\delta^2}\,
              \sin^2\!\angle(\nu_j,\,e_j)}
            _{\text{in-subspace error }}.
\end{equation}

\medskip\noindent\emph{(ii) Each element of principal direction error converges for large $p$.}  Conditional on
$F$ and almost surely as $p\to\infty$, one has $\Pib h_j\neq0$ for all
sufficiently large $p$, and
\begin{equation}\label{eq:thmfloor}
  \sin^2\!\angle_p(h_j,\mathcal B)\;\longrightarrow\;
    \frac{\delta^2}{n\lambda_j+\delta^2},
  \qquad
  \cos^2\!\angle_p(h_j,\mathcal B)\;\longrightarrow\;
    \frac{n\lambda_j}{n\lambda_j+\delta^2},
\end{equation}
\begin{equation}\label{eq:thmrot}
  \sin^2\!\angle_p\big(\Pib h_j,\,b_j\big)\;\longrightarrow\;
    \sin^2\!\angle\big(\nu_j,\,e_j\big) .
\end{equation}

\end{theorem}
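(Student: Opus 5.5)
The plan is to pass everything through the $n\times n$ dual Gram matrix $W^{(p)}$ and then through the principal-coordinate chart. By the singular value decomposition of $Y/\sqrt{np}$, the $j$th unit eigenvector of $S^{(p)}$ is
\[
h_j=\frac{Y u_j^{(p)}}{\sqrt{np\,\theta_j^{(p)}}},
\]
where $u_j^{(p)}$ is the $j$th unit eigenvector of $W^{(p)}$ and $\theta_j^{(p)}$ is the corresponding eigenvalue (Lemma~\ref{lem:gramdual}). By Proposition~\ref{prop:dual}, $W^{(p)}\to W=W_0+(\delta^2/n)I_n$ almost surely, conditional on $F$. The top $k$ eigenvalues of $W$ are simple and separated from the bulk by $\lambda_k>0$ (Assumption~\ref{asm:reg}). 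Hence $\theta_j^{(p)}\to\lambda_j+\delta^2/n$, and, after a sign choice, $u_j^{(p)}\to w_j$ by Davis--Kahan or continuity of isolated eigenprojections.

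Next, split $Y=b\Phi+Z$, so that
\[
b^{\!\top}h_j=\frac{\bar\Phi u_j^{(p)}}{\sqrt{n\theta_j^{(p)}}},\qquad \bar\Phi=\Phi/\sqrt p .
\]
Since $\bar\Phi\to\bar\Phi^\infty$ (Proposition~\ref{prop:gramlim}), we get
\[
b^{\!\top}h_j\to\frac{\bar\Phi^\infty w_j}{\sqrt{n(\lambda_j+\delta^2/n)}}
=\sqrt{\frac{n\lambda_j}{n\lambda_j+\delta^2}}\,\nu_j ,
\]
using the duality link $\sqrt{n\lambda_j}\,\nu_j=\bar\Phi^\infty w_j$ of Corollary~\ref{cor:pcdual}. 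Because $\|\Pib h_j\|=\|b^{\!\top}h_j\|$, this gives the limit of $\cos^2\angle_p(h_j,\mathcal B)$ in \eqref{eq:thmfloor}. The $\sin^2$ limit follows as its complement, since $\|h_j\|=1$.

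The limit is nonzero, so $\Pib h_j\ne0$ for all large $p$. Also, $\Pib h_j=b(b^{\!\top}h_j)$ and $b$ is an isometry from $\R^k$ onto $\mathcal B$, with $b_j=be_j$. Therefore $\angle_p(\Pib h_j,b_j)=\angle(b^{\!\top}h_j,e_j)$, which converges to $\angle(\nu_j,e_j)$ because the angle is continuous away from the zero vector. This proves \eqref{eq:thmrot}. Part (i) then follows by passing to the limit in the exact identity \eqref{eq:thmsplit}.

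The main obstacle is making the two convergence inputs precise on a single almost-sure event, conditional on $F$. First, the noise contributions to $W^{(p)}$ must vanish or concentrate: $F^{\!\top}\bar\Phi^{\!\top} b^{\!\top}Z/(n\sqrt p\sqrt p)\to0$, and $Z^{\!\top}Z/(np)\to(\delta^2/n)I_n$. Second, there is a subtlety: $b^{\!\top}Z/\sqrt p\cdot u$ must not leak into $b^{\!\top}h_j$. We have
\[
b^{\!\top}h_j=\frac{\bar\Phi u_j^{(p)}+b^{\!\top}Zu_j^{(p)}/\sqrt p}{\sqrt{n\theta_j^{(p)}}} ,
\]
so we also need $b^{\!\top}Z/\sqrt p\to0$ almost surely. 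Each entry is $p^{-1/2}\sum_i b_{il}Z_{il}$, where $b$ itself depends on $p$ and is not nested. I would handle this by writing $b=B\,T^{(p)}$ with $T^{(p)}$ convergent (Lemma~\ref{lem:basis}, Proposition~\ref{prop:gramlim}). This reduces the problem to $p^{-1}\sum_{i\le p}B_{ij}Z_{il}\to0$ almost surely, a strong law for independent, non-identically distributed terms with bounded fourth moments. I expect the appendix lemmas to supply this under Assumptions~\ref{asm:noise} and~\ref{asm:gram}, via a fourth-moment Borel--Cantelli argument. The only delicate point is controlling $\sum_i B_{ij}^4$ using only the Gram condition. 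If that is unavailable, I would use a Kolmogorov-type $L^2$ SLLN, since $\sum_i B_{ij}^2/i^2<\infty$ follows from $B^{\!\top}B/p$ converging.

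With these limits in hand, all the steps above are continuous-mapping arguments.
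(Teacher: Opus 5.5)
Your proof is correct. It follows the paper's architecture: reconstructing $h_j$ from the dual eigenvector, splitting $Y=b\Phi+Z$, using $\bar\Phi\to\bar\Phi^\infty$ and the duality link of Corollary~\ref{cor:pcdual}, the isometry $\xi\mapsto b\xi$, and the exact split \eqref{eq:thmsplit}. The one real difference is in how the magnitudes are obtained. The paper computes the out-of-subspace part directly, $\|\Pi^{\perp}h_j\|^2=\bigl(w^{\top}(Z^{\top}Z/p)w-\|\Pi Zw\|^2/p\bigr)/(n\theta_j^{(p)})$ with $w=w_j^{(p)}$. This needs only $Z^{\top}Z/p\to\delta^2I_n$, $\|\Pi Z\|_F=o(\sqrt p)$ and eigenvalue convergence, with no eigenvector limit, which mirrors the eigenvalue-only estimator of Theorem~\ref{thm:obsfloor}. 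You instead read everything off the single limit $b^{\top}h_j\to\pm\sqrt{n\lambda_j/(n\lambda_j+\delta^2)}\,\nu_j$ in $\R^k$. That costs eigenvector convergence, but it delivers $\cos^2\angle_p(h_j,\mathcal B)$, $\Pi h_j\neq0$, and the rotation angle all at once. It does so via the exact identity $\Pi h_j=b\,(b^{\top}h_j)$, which is a bit tidier than the paper's $\Pi h_j=b\,\xi_j^{(p)}+o(1)$ bookkeeping.

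Your ``main obstacle'' is lighter than you fear. Lemma~\ref{lem:zconc} applies to arbitrary deterministic unit vectors $a^{(p)}$, which need not be nested. The reason is that $\sum_i a_i^4\le(\sum_i a_i^2)^2=1$ gives the $p$-uniform bound $\E[(a^{\top}Z_{\cdot l})^4\mid F]\le\kappa_4+3\bar\delta^4$, and Borel--Cantelli only needs $\sum_p P(|a^{(p)\top}Z_{\cdot l}|>\epsilon\sqrt p\mid F)<\infty$. Taking $a=b_l$ gives $b^{\top}Z/\sqrt p\to0$ directly; this is Proposition~\ref{prop:dual}(b). Your detour through $B$ also works, with one correction. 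In $b=BC$ it is $\sqrt p\,C=\Sigma_f^{1/2}V^{(p)}\operatorname{diag}(\mu_j^{(p)})^{-1/2}$ that converges, not $C$, so $b^{\top}Z/\sqrt p=(\sqrt p\,C)^{\top}B^{\top}Z/p$. The ``delicate point'' also disappears: $\sum_{i\le p}B_{ij}^4\le(\sum_{i\le p}B_{ij}^2)^2=O(p^2)$ already gives the summable tail $O(p^{-2})$ at level $\epsilon p$. Your Kolmogorov alternative is valid too, since $\sum_iB_{ij}^2/i^2<\infty$ follows from $\sum_{i\le p}B_{ij}^2=O(p)$ by summation by parts.
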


As the specific variance $\delta^2$ tends to zero, the principal direction error becomes equal to the in-subspace rotation error
$\sin^2\!\angle(\nu_j,\,e_j)$, which measures how far the realized principal
direction $\nu_j$ has turned from its axis $e_j$.  (Under the
assumptions of Corollary~\ref{cor:n-limit} of the Appendix, both terms vanish as
$n\to\infty$, since, by that Corollary,
$\lambda_j\to\mu_j$ and
$\angle(\nu_j,e_j)\to0$).
\medskip

\begin{remark}
Dividing top and bottom by $n$ shows that the out-of-subspace error is the fraction of scaled variance $\lambda_j + \delta^2/n$ attributable to noise $\delta^2/n$, while the in-subspace error is weighted by the fraction attributable to signal $\lambda_j$.

Using the finite sample realized signal-to-noise ratio, $\mathrm{SNR}_j = n\lambda_j/\delta^2$, Theorem~\ref{thm:main} can be rewritten as
\begin{equation}\label{eq:thm_snr}
  \sin^2\!\angle_p(h_j,b_j)
  \;\longrightarrow\;
  \frac{1}{1+\mathrm{SNR}_j}
  \,+\,
  \frac{\mathrm{SNR}_j}{1+\mathrm{SNR}_j}\,
  \sin^2\!\angle(\nu_j,e_j).
\end{equation}
{making the role of the finite-sample signal-to-noise ratio explicit.}
\end{remark}

\subsection{Proof of the error decomposition} \label{sec:main-proof}

All limits are almost sure as $p\to\infty$ with $n,k$ fixed; eigenvectors
are taken up to sign, and every quantity in \eqref{eq:thm} is sign-free.
We use the following standard notation: if $\alpha(p)$ and $\beta(p)$ are two positive functions, we write $\alpha = O(\beta)$ if $\alpha \leq C \beta$ for some constant $C$ and all large $p$, $\alpha = o(\beta)$ if $\alpha / \beta$ tends to zero as $p \to \infty$, and $\alpha = \Theta(\beta)$ if $C_1 \beta \leq \alpha \leq C_2 \beta$ for positive constants $C_1$ and $C_2$ and all large $p$.

Our proof relies on principal direction coordinates for the factor contribution, which gives us the model representation
\begin{equation}
    y = b \phi + z.
\end{equation}

{
The angle $\angle_p\big(\Pib h_j,\,b_j\big)$ lies inside the $k$-dimensional factor loading subspace $\mathcal{B} = \col(b)$. Since $\col(b)$ varies with $p$, to study the $p$-asymptotics we make use of the fixed coordinate space $\mathbb{R}^k$ via the coordinate mappings}
\begin{equation}
    \Gamma: \mathbb{R}^k \to \mathcal{B}, \quad \Gamma(\xi) = b \xi \in \mathcal{B}.
\end{equation}
The principal-direction coordinate
of any $u \in \mathcal{B}$ is $\Gamma^{-1}(u) = b^\top u \in \mathbb{R}^k$. Since $b$ has orthonormal columns, $\Gamma$ is, for each $p$, an isometry from $\mathbb{R}^k$ to $\mathcal{B}$ with its inherited inner product as a subspace of $\mathbb{R}^p$.  The factor term $b \phi = \Sigma_j \phi_j b_j$ is a decomposition of $\Gamma(\phi) \in \mathcal{B}$ along the orthonormal principal-coordinate directions $b_j$ with uncorrelated coordinates $\phi_j$ of variance
$(\Delta_0)_{jj}$ (Proposition \ref{prop:chart}).

In these principal-direction coordinates on $\mathbb{R}^k = \Gamma^{-1}(\mathcal{B})$, this means that the estimation targets $b_j$ are simply the coordinate axes $e_j = \Gamma^{-1}(b_j)$ of $\mathbb{R}^k$.  This explains the appearance of $e_j$ in the statement of the theorem.

We now proceed to the steps of the proof.  Fix $j$. Let
$\Pib:=bb^{\!\top}$ be the orthogonal projector onto $\mathcal B$ and
$\Pibp:=I-\Pib$, and write
$\|\cdot\|_F$ for the Frobenius norm.

\paragraph{Step 0: an exact angular split for each $p$.}  The idea is to form the orthogonal decomposition of $h_j$ relative to $\mathcal{B}$:
$$h_j=\Pibp h_j+\Pib h_j$$ with $\Pibp h_j \in \mathcal{B}^\perp$ and $\Pib h_j \in \mathcal{B}$.
Since $\langle\Pibp h_j,b_j\rangle=0$ (as $b_j\in\mathcal B$),
\begin{equation}\label{eq:cosalg}
  \cos^2\!\angle_p(h_j,b_j)=\langle h_j,b_j\rangle^2
   =\langle\Pib h_j,b_j\rangle^2
   =\|\Pib h_j\|^2\,\cos^2\!\angle_p\!\Big(\tfrac{\Pib h_j}{\|\Pib h_j\|},b_j\Big),
\end{equation}
and using $\|\Pibp h_j\|^2=1-\|\Pib h_j\|^2$ and $\sin^2=1-\cos^2$,
\begin{equation}\label{eq:decomp}
  \sin^2\!\angle_p(h_j,b_j)
   =\|\Pibp h_j\|^2
   +\|\Pib h_j\|^2\,
    \sin^2\!\angle_p\!\Big(\tfrac{\Pib h_j}{\|\Pib h_j\|},b_j\Big),
\end{equation}
 where the second summand is set to $0$ if
$\Pib h_j=0$. This is an exact geometric identity valid at every $p$. Since $h_j$ is a unit vector, we have
\begin{equation}
    \|\Pi^\perp h_j\|^2 = \sin^2\angle_p(h_j, \mathcal{B})
    \;\text{ and }\; \|\Pi h_j\|^2 = \cos^2\angle_p(h_j, \mathcal{B}).
\end{equation}

The remainder of the proof reduces to the fixed-size dual (Step 1), and then evaluates the limits, as $p \to \infty$, of the three quantities
$\|\Pibp h_j\|^2$, $\|\Pib h_j\|^2$, and the angle $\angle_p(\Pi h_j, b_j)$ (Steps 2 and 3).

\paragraph{Step 1: reduce to the fixed-size $n \times n$ dual.}  For convenience, let
$A:=Y/\sqrt{np}$, so $S^{(p)}=AA^{\!\top}$ and $W^{(p)}=A^{\!\top}A$
share the eigenvalue $\theta_j^{(p)}$.

We first note, for $j = 1,\dots,k$, that the
 $j$th eigenvalue and $j$th eigenvector of the finite-$p$ matrix $W^{(p)}$ are well-defined for large enough $p$, since these eigenvalues are simple for large $p$.
By Proposition~\ref{prop:dual}(c)
and Weyl's inequality (\cite{Bhatia}, III.2), every eigenvalue of $W^{(p)}$ converges to the
corresponding eigenvalue of $W$, and the $k+1$ limits
$\lambda_1+\delta^2/n>\cdots>\lambda_k+\delta^2/n>\delta^2/n$
are distinct and positive (Assumption~\ref{asm:reg}); hence, on the a.s.\
event of the proposition and for all sufficiently large $p$, the top $k$
eigenvalues $\theta_1^{(p)}>\cdots>\theta_k^{(p)}>0$ of $W^{(p)}$ are
simple and strictly positive.  {\em All finite-$p$ statements below are
understood for such $p$;} in particular Lemma~\ref{lem:gramdual} applies to
$A$ and the reconstruction \eqref{eq:recon} is well defined.

Let $w_j^{(p)} \in \mathbb{R}^n$ denote the $j$th
unit eigenvector of $W^{(p)}$.  Gram duality (Lemma~\ref{lem:gramdual})
reconstructs the ambient eigenvector from this fixed-size one:
\begin{equation}\label{eq:recon}
  h_j=\frac{A\,w_j^{(p)}}{\sqrt{\theta_j^{(p)}}}
     =\frac{Y\,w_j^{(p)}}{\sqrt{np}\,\sqrt{\theta_j^{(p)}}}
     = \underbrace{\frac{b \Phi\,w_j^{(p)}}{\sqrt{p}\,\sqrt{n\theta_j^{(p)}}}}_{\text{systematic}} + \underbrace{\frac{Z\,w_j^{(p)}}{\sqrt{p}\,\sqrt{n\theta_j^{(p)}}}}_{\text{specific}}.
\end{equation}

We now apply the orthogonal decomposition of Step 0 to the reconstruction \eqref{eq:recon},
projecting onto $\mathcal B^\perp$ and $\mathcal B$ via $\Pibp$ and $\Pib$; this turns
the abstract split \eqref{eq:decomp} into explicit finite-$p$ objects. The systematic
term $b\Phi w_j^{(p)}$ lies in $\mathcal B$, so $\Pibp$ annihilates it, leaving only
the specific term. $\Pib$, by contrast, leaves the systematic term untouched --- since $\Pib b=b$ ---
while contributing a further specific return term of its own. Within $\mathcal B$,
left-multiplying by $b^{\!\top}$ recovers the systematic term's principal-direction
coordinate
$$
\xi_j^{(p)}:=\frac{\bar\Phi\,w_j^{(p)}}{\sqrt n\,\sqrt{\theta_j^{(p)}}}\in\R^k
$$
since $b^{\!\top}b=I_k$; here $\bar\Phi:=\Phi/\sqrt p\in\R^{k\times n}$, as in
Proposition~\vref{prop:gramlim}. Together these give the exact finite-$p$ identities
\begin{equation}\label{eq:split}
  \Pibp h_j=\frac{\Pibp Z\,w_j^{(p)}}
                 {\sqrt{p}\,\sqrt{n\theta_j^{(p)}}},
  \qquad
  \Pib h_j=b\,\xi_j^{(p)}
           +\frac{\Pib Z\,w_j^{(p)}}
                 {\sqrt{p}\,\sqrt{n\theta_j^{(p)}}}.
\end{equation}

All $p$-dependence is now carried by the specific return $Z$ and the fixed-size quantities
$\theta_j^{(p)},w_j^{(p)},\bar\Phi$, governed by
the limits (Propositions~\ref{prop:gramlim} and \ref{prop:dual})
\begin{equation}\label{eq:package}
  \frac{Z^{\!\top}Z}{p}\to\delta^2I_n,\quad
  \|\Pib Z\|_F=o(\sqrt p),\quad
  \theta_j^{(p)}\to\lambda_j+\tfrac{\delta^2}{n},\quad
  \angle\big(w_j^{(p)},w_j\big)\to0,\quad
  \bar\Phi\to\bar\Phi^{\infty}.
\end{equation}

\paragraph{Step 2: projection magnitudes,
$\|\Pibp h_j\|^2\to\delta^2/(n\lambda_j+\delta^2)$,
 $\|\Pib h_j\|^2\to n\lambda_j/(n\lambda_j + \delta^2)$.}

Using \eqref{eq:split} and $\|\Pibp v\|^2=\|v\|^2-\|\Pib v\|^2$ with
$v=Zw_j^{(p)}$,
\begin{equation}\label{eq:floorcalc}
  \|\Pibp h_j\|^2
   =\frac{\|\Pibp Zw_j^{(p)}\|^2}{p\,n\,\theta_j^{(p)}}
   =\frac{1}{n\,\theta_j^{(p)}}
    \left((w_j^{(p)})^{\!\top}\frac{Z^{\!\top}Z}{p}w_j^{(p)}
          -\frac{\|\Pib Zw_j^{(p)}\|^2}{p}\right).
\end{equation}
Since $\|w_j^{(p)}\|=1$ and $Z^{\!\top}Z/p\to\delta^2I_n$, the first
bracketed term $\to\delta^2$ (no limit of $w_j^{(p)}$ is needed); the
second is $\le\|\Pib Z\|_F^2/p=o(1)$ by \eqref{eq:package}.  With
$n\theta_j^{(p)}\to n\lambda_j+\delta^2$,
\begin{equation}\label{eq:floor}
  \|\Pibp h_j\|^2\longrightarrow\frac{\delta^2}{n\lambda_j+\delta^2} .
\end{equation}

Since $\|\Pi h_j\|^2 = 1 - \|\Pi^\perp h_j\|^2$
 using \eqref{eq:floor} we have
\begin{equation} \label{eq:Pih}
\|\Pi h_j\|^2 \longrightarrow
1 - \frac{\delta^2}{n\lambda_j+\delta^2}
= \frac{n \lambda_j}{n\lambda_j+\delta^2}.
\end{equation}

In particular, since $\lambda_j>0$ (Assumption~\ref{asm:reg}),
 $\|\Pib h_j\|$ is
eventually positive.

\paragraph{Step 3: in-subspace rotation,
$\sin^2\!\angle_p(\Pib h_j,b_j)\to\sin^2\!\angle(\nu_j,e_j)$.}

{ Since $w_j^{(p)}$ is a unit vector, we have
\[
\|\Pi Z w_j^{(p)} \| \leq
\|\Pi Z\|_2 \|w_j^{(p)}\| =
\|\Pi Z\|_2 \leq \|\Pi Z\|_F,
\]
where $\| \|_2$ denotes the operator norm.
Hence the
}
specific return term of $\Pib h_j$ in \eqref{eq:split} has norm at most
$\|\Pib Z\|_F/(\sqrt{np}\sqrt{\theta_j^{(p)}})$,
which is
$o(\sqrt p)/\Theta(\sqrt{np})=o(1)$; so
\begin{equation}
\Pib h_j=b\,\xi_j^{(p)}+o(1).
\end{equation}

\medskip
 For $p$ large enough that $w_j^{(p)}$ is well-defined and $\langle w_j^{(p)},w_j\rangle >0$,
the signs $\sigma_j^{(p)}:=\operatorname{sign}\langle w_j^{(p)},w_j\rangle$ are well-defined,
and
by \eqref{eq:package} and Lemma~\ref{lem:econv}(ii), 
$\sigma_j^{(p)}w_j^{(p)}\to w_j$.
Using
this together with $\bar\Phi\to\bar\Phi^{\infty}$,
$\theta_j^{(p)}\to\lambda_j+\delta^2/n$, and the duality
link $\bar\Phi^{\infty}w_j=\sqrt{n\lambda_j}\,\nu_j$ of
Corollary~\ref{cor:pcdual},
\begin{equation}\label{eq:xilim}
  \xi_j^{(p)}
   =\frac{\bar\Phi\,w_j^{(p)}}{\sqrt n\,\sqrt{\theta_j^{(p)}}}
   =\sigma_j^{(p)}\!\left(
      \frac{\bar\Phi^{\infty}w_j}{\sqrt{n\lambda_j+\delta^2}}
      +o(1)\right)
   =\sigma_j^{(p)}\big(\kappa_j\,\nu_j+o(1)\big),
\end{equation}
with $\kappa_j:=\sqrt{n\lambda_j/(n\lambda_j+\delta^2)}\in(0,1)$. 
The quantities below do not depend on the sign of $\xi_j^{(p)}$,
so $\sigma_j^{(p)}$
plays no further role.

  Using
$\Pib h_j=b\,\xi_j^{(p)}+o(1)$, $b_j = b e_j$ again, and
the isometry $\Gamma (\xi) = b\xi$,
\begin{equation}\label{eq:cos}
  \cos\angle_p\!\Big({\Pib h_j},b_j\Big)
  = \frac{\big|\langle b\xi_j^{(p)},be_j\rangle+o(1)\big|}{\|\Pib h_j\|}
  = \frac{\big|\langle\xi_j^{(p)},e_j\rangle+o(1)\big|}{\|\Pib h_j\|}.
  \end{equation}
  By \eqref{eq:xilim} and the square root of \eqref{eq:Pih}, this converges to
\begin{equation}
\frac{\kappa_j\big|\langle\nu_j, e_j\rangle\big|}{\kappa_j}
   =\big|\langle\nu_j, e_j\rangle\big|
   =  \cos \angle (\nu_j,e_j).
\end{equation}
Hence
\begin{equation}\label{eq:angle}
  \sin^2\!\angle_p\!\Big({\Pib h_j},b_j\Big) = 1 -\cos^2 \!\angle_p\!\Big({\Pib h_j},b_j\Big)
   \longrightarrow 1- \cos^2\!\angle(\nu_j,e_j)
   =\sin^2\!\angle(\nu_j,e_j).
\end{equation}

\paragraph{Conclusion.}  Substituting \eqref{eq:floor}, \eqref{eq:Pih},
\eqref{eq:angle} into the exact split \eqref{eq:decomp} yields \eqref{eq:thm}.
\hfill$\qed$

 \section{Estimable and non-estimable errors}
\label{sec:data-driven}

The proof of Theorem~\ref{thm:main} expresses the error $\sin^2 \angle_p(h_j,b_j)$ in an estimated principal direction as a sum of two terms, each of which has an almost sure limit for a fixed number of observations $n$ as the number
of variables $p$ tends to infinity.

In this section, we show that the two terms differ in character:  the out-of-subspace error  $\sin^2\!\angle_p(h_j,\mathcal B)$ (and hence a lower bound for the total error) is {\it estimable} in the sense that it can be estimated from $n$ observations of $p$ variables, and the almost sure limits in $p$ of the estimate and the error agree (Theorem~\ref{thm:obsfloor}).  In contrast, the rotation error $\sin^2\!\angle_p\big(\Pib h_j,\,b_j\big)$ is {\it non-estimable} from data:  it cannot be estimated solely from {the observations $Y = Y^{(p)}$} (Theorem~\ref{thm:rotrange}).

{
These theorems give guidance for what a user may need to know when faced with a latent factor model in order to estimate the principal direction estimation error $\angle_p (h_j, b_j)$.
\begin{enumerate}
    \item When only the data $Y$ is observed, there is a consistent estimator (as $p \to \infty$) for $\sin^2 \angle_p (h_j, \mathcal{B})$, and hence a lower bound for the total asymptotic error;
    \item If in addition $G_B$ and a distribution for the factor returns path $F$ is known, then it will be possible to compute an estimated distribution for $\angle_p (h_j, b_j)$, since that will determine $\Sigma_f$ and a distribution for $FF^\top$, and hence for the in-subspace angular error (see Proposition~\ref{prop:gramlim}).
    \item If $G_B$, the factor returns path $F$, and the factor covariance $\Sigma_f$ are all known, then Theorem~\ref{thm:main} provides a computable consistent estimator for the full error $\angle_p (h_j, b_j)$.
\end{enumerate}

}

\subsection{Out-of-subspace error is estimable from data}

An estimate of out-of-subspace error is expressed in terms of  the $n$ eigenvalues 
$$\theta_1^{(p)} \ge  \theta_2^{(p)}   \ge \cdots\ge\theta_n^{(p)}$$
 of the $n \times n$ scaled dual sample covariance matrix $W^{(p)}=Y^{\!\top}Y/(np)$.

The  { \it average bulk eigenvalue} is the average of the  $n-k$ trailing
 sample eigenvalues:
\begin{equation}\label{eq:elldef}
 \ell^{(p)}:=
   \frac{1}{\,n-k\,}\Big(\tr W^{(p)}-\sum_{i=1}^{k}\theta_i^{(p)}\Big),
\end{equation}
where $\tr W^{(p)}=\|Y\|_F^2/(np)$.  Both $\ell^{(p)}$ and $\theta_j^{(p)}$
are computable from the data (given the factor count $k$).

In the next theorem, we show that a data-driven quantity, the ratio of the average bulk eigenvalue to the $j$th eigenvalue of $W^{(p)}$, has the same almost sure limit in $p$ as the $j$th out-of-subspace error $\sin^2\!\angle_p(h_j,\mathcal B)$.
\medskip

\begin{theorem}[A data-driven estimate for out-of-subspace error]\label{thm:obsfloor}
Under Assumptions~\ref{asm:fm}--\ref{asm:reg}, conditional on $F$ and
almost surely as $p\to\infty$, for each $j\in\{1,\dots,k\}$,
\begin{equation}\label{eq:obsfloor}
  \limp \;  \frac{\ell^{(p)}}{\theta_j^{(p)}}
   \;=\;
   \frac{\delta^2}{\,n\lambda_j+\delta^2\,}
   \;=\;\limp  \sin^2\!\angle_p(h_j,\mathcal B).  
\end{equation}

 Hence
\begin{equation}\label{eq:obsfloorineq}
\limp \; \frac{\ell^{(p)}}{\theta_j^{(p)}} \;\le\;\limp\sin^2\!\angle_p(h_j,b_j),
\end{equation}
with equality precisely when the in-subspace rotation 
$\sin^2\!\angle(\nu_j,e_j)$ vanishes.

\end{theorem}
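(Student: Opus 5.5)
The plan is to compute the almost sure limit of $\ell^{(p)}$ and of $\theta_j^{(p)}$ separately, form their ratio, and then compare with Theorem~\ref{thm:main}. The second equality in \eqref{eq:obsfloor} is exactly \eqref{eq:thmfloor} of Theorem~\ref{thm:main}(ii), so nothing new is needed there. The work is entirely in the first equality.

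\textbf{Step 1: limits of $\tr W^{(p)}$ and of the top eigenvalues.} By Proposition~\ref{prop:dual}, conditional on $F$ and almost surely, $W^{(p)}\to W=W_0+(\delta^2/n)I_n$ entrywise. Since $n$ is fixed, this gives $\tr W^{(p)}\to \tr W_0+\delta^2$. Because $W_0$ has rank $k$ with nonzero eigenvalues $\lambda_1,\dots,\lambda_k$, we get $\tr W_0=\sum_{i\le k}\lambda_i$.

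By Weyl's inequality, as already used in Step~1 of the proof of Theorem~\ref{thm:main}, every eigenvalue of $W^{(p)}$ converges to the corresponding eigenvalue of $W$. In particular $\theta_i^{(p)}\to\lambda_i+\delta^2/n$ for $i\le k$.

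\textbf{Step 2: limit of $\ell^{(p)}$.} Substituting these limits into \eqref{eq:elldef} gives
\[
\ell^{(p)}\to\frac{1}{n-k}\Big(\sum_{i\le k}\lambda_i+\delta^2-\sum_{i\le k}\lambda_i-\frac{k\delta^2}{n}\Big)=\frac{\delta^2}{n}.
\]
Equivalently, each of the $n-k$ trailing eigenvalues of $W^{(p)}$ tends to $\delta^2/n$, and $\ell^{(p)}$ is their average. Here $n>k$ is used so that the average is well defined.

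\textbf{Step 3: the ratio.} Since $\lambda_j>0$ by Assumption~\ref{asm:reg}, the limit $\lambda_j+\delta^2/n$ is strictly positive. Hence $\theta_j^{(p)}$ is eventually bounded away from zero, and
\[
\frac{\ell^{(p)}}{\theta_j^{(p)}}\to\frac{\delta^2/n}{\lambda_j+\delta^2/n}=\frac{\delta^2}{n\lambda_j+\delta^2}.
\]
All of these statements hold on the single almost sure event of Proposition~\ref{prop:dual}, conditional on $F$.

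\textbf{Step 4: the inequality and its equality case.} By \eqref{eq:thm}, the limit of the total error is
\[
\frac{\delta^2}{n\lambda_j+\delta^2}+\frac{n\lambda_j}{n\lambda_j+\delta^2}\sin^2\angle(\nu_j,e_j).
\]
The second summand is nonnegative, which gives \eqref{eq:obsfloorineq}. Its weight $n\lambda_j/(n\lambda_j+\delta^2)$ is strictly positive, so equality holds if and only if $\sin^2\angle(\nu_j,e_j)=0$.

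\textbf{Main point of care.} There is no real obstacle. The only thing to check is that Proposition~\ref{prop:dual} yields convergence of the full matrix $W^{(p)}$, including the diagonal entries that make up $\tr W^{(p)}$, on the same almost sure event as the eigenvalue convergence. This includes the noise diagonal $(Z^\top Z/p)_{ll}\to\delta^2$, which relies on the strong law under the bounded fourth moments of Assumption~\ref{asm:noise} together with Assumption~\ref{asm:delta}.
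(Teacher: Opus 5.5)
Your proposal is correct and follows essentially the paper's proof. Weyl's inequality applied to $W^{(p)}\to W$ from Proposition~\ref{prop:dual}(c) gives the eigenvalue limits, the ratio converges because $\lambda_j+\delta^2/n>0$, and the inequality together with its equality case is read off from \eqref{eq:thm}. The only cosmetic difference is that you obtain $\ell^{(p)}\to\delta^2/n$ through the trace, whereas the paper averages the $n-k$ trailing eigenvalue limits directly, an alternative you also note as equivalent.
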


\begin{remark}
    This means $\ell^{(p)}/\theta_j^{(p)}$ is a strongly consistent, fully observable
estimator of out-of-subspace error $ \sin^2\!\angle_p(h_j,\mathcal B)$, a lower bound for the total error.
\end{remark}

\begin{proof}
By Proposition~\ref{prop:dual}(c), $W^{(p)}\to W$ in spectral norm,
a.s.  The limit $W=F^{\!\top}G_BF/n+(\delta^2/n)I_n$ of
\eqref{eq:Wlim} has top $k$ eigenvalues $\lambda_j+\delta^2/n$ and
smallest eigenvalue $\delta^2/n$ with multiplicity $n-k$.
 Weyl's inequality
gives, for every $i$,
$|\theta_i -\lambda_i(W)|\le\|W^{(p)}-W\|\to0$,
where $\lambda_i(W)$ denotes the $i$th eigenvalue of $W$.
In
particular $\theta_i\to\delta^2/n$ for each $i\in\{k+1,\dots,n\}$, and
averaging these $n-k$ limits yields $\ell^{(p)} \to\delta^2/n$.  The
convergence $\theta_j\to\lambda_j+\delta^2/n>0$ for
$j=1,\dots,k$ is Proposition~\ref{prop:dual}(d).  As the denominator limit $\theta_j^{(p)}\to\lambda_j+\delta^2/n$ is
strictly positive,
\begin{equation*}
  \frac{\ell^{(p)}}{\theta_j^{(p)}}
   \longrightarrow
   \frac{\delta^2/n}{\lambda_j+\delta^2/n}
   =\frac{\delta^2}{n\lambda_j+\delta^2},
\end{equation*}
which equals $\limp\|\Pibp h_j\|^2$ by \eqref{eq:floor}, proving the first equality in
\eqref{eq:obsfloor}.  Finally, the inequality in \eqref{eq:obsfloorineq} is immediate from
\eqref{eq:thm}, whose in-subspace summand is nonnegative and vanishes if and only if
$\sin^2\!\angle(\nu_j,e_j)=0$.
\end{proof}

Aggregating the per-direction floors over $j$ yields a subspace-level
statement.  Let $H:=[\,h_1\ \cdots\ h_k\,]\in\R^{p\times k}$ collect the top
$k$ sample eigenvectors, $\mathcal{H} := \col(H)$, and let $\Pi_{\mathcal{H}}:=HH^{\!\top}$ and
$\Pi_{\mathcal B}:=\Pib=bb^{\!\top}$ be the orthogonal projectors onto 
$\mathcal{H}$ and $\mathcal B$, respectively.
\medskip

\begin{corollary}[Aggregate out-of-subspace error]\label{cor:subspace}
Under Assumptions~\ref{asm:fm}--\ref{asm:reg}, conditional on $F$ and
almost surely as $p\to\infty$, the distance between the subspaces $\mathcal{H}$ and $\mathcal{B}$ is equal to the sum of the out-of-subspace floors of Theorem~\ref{thm:main}. That is,
\begin{equation}\label{eq:subspace}
  \sum_{j=1}^{k}\sin^2\!\angle_p(h_j,\mathcal B) \;=\; \tfrac12\,\|\Pi_{\mathcal H}-\Pi_{\mathcal B}\|_F^2
   \;\longrightarrow\;
   \sum_{j=1}^{k}\frac{\delta^2}{\,n\lambda_j+\delta^2\,},
\end{equation}
where the first
expression is the sum of the squared sines of the $k$ principal
angles between ${\mathcal H}$ and $\mathcal B$.

By
Theorem~\ref{thm:obsfloor}, the limit is consistently estimated by the
observable statistic $\sum_{j=1}^{k}\ell/\theta_j$.
\end{corollary}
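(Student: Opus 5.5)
The plan has two parts: an exact identity at every sufficiently large $p$, followed by a limit taken term by term using Theorem~\ref{thm:main}(ii).

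\emph{The identity.} Take $p$ large enough that the top $k$ eigenvalues of $W^{(p)}$ are simple and positive (Step~1 of the proof of Theorem~\ref{thm:main}). Then $h_1,\dots,h_k$ are orthonormal, so $\Pi_{\mathcal H}=HH^{\!\top}$ is a rank-$k$ orthogonal projector, as is $\Pi_{\mathcal B}=bb^{\!\top}$. Expanding the Frobenius norm with $\tr\Pi_{\mathcal H}=\tr\Pi_{\mathcal B}=k$ gives
\[
\tfrac12\|\Pi_{\mathcal H}-\Pi_{\mathcal B}\|_F^2
= k-\tr(\Pi_{\mathcal H}\Pi_{\mathcal B}).
\]
Next, $\tr(\Pi_{\mathcal H}\Pi_{\mathcal B})=\tr(H^{\!\top}\Pib H)=\sum_{j=1}^k\|\Pib h_j\|^2$. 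Since each $h_j$ is a unit vector, $1-\|\Pib h_j\|^2=\|\Pibp h_j\|^2=\sin^2\angle_p(h_j,\mathcal B)$, as recorded in Step~0. Summing over $j$ yields the first equality in \eqref{eq:subspace}.

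To justify the principal-angle interpretation, let $\cos\vartheta_1,\dots,\cos\vartheta_k$ be the singular values of the $k\times k$ matrix $b^{\!\top}H$. Then $\|b^{\!\top}H\|_F^2=\sum_i\cos^2\vartheta_i$, which equals $\sum_j\|\Pib h_j\|^2$. Hence the same quantity is $\sum_i\sin^2\vartheta_i$. This step is pure linear algebra and needs no limits.

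\emph{The limit.} By \eqref{eq:thmfloor}, each term $\sin^2\angle_p(h_j,\mathcal B)$ converges almost surely, conditional on $F$, to $\delta^2/(n\lambda_j+\delta^2)$. A finite sum of $k$ almost surely convergent sequences converges almost surely, because the intersection of $k$ probability-one events has probability one. This gives the limit in \eqref{eq:subspace}.

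\emph{The estimator.} The consistency claim follows from Theorem~\ref{thm:obsfloor} applied to each $j$. On the same a.s.\ event we have $\ell^{(p)}/\theta_j^{(p)}\to\delta^2/(n\lambda_j+\delta^2)$, and summing over $j$ shows that $\sum_{j}\ell^{(p)}/\theta_j^{(p)}$ has the same limit.

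\emph{Main obstacle.} There is little difficulty here. The only delicate point is that $H$ must be well defined, with distinct eigenvalues so that the $h_j$ are determined up to sign. This holds for all sufficiently large $p$ on the almost sure event of Proposition~\ref{prop:dual}, and the identity is insensitive to sign choices in any case.
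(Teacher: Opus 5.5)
Your proposal is correct and matches the paper's proof essentially step for step. Both use the trace expansion $\tfrac12\|\Pi_{\mathcal H}-\Pi_{\mathcal B}\|_F^2=k-\sum_j\|\Pi h_j\|^2$, identify the principal-angle cosines with the singular values of $b^{\top}H$, and pass to the limit term by term via \eqref{eq:floor} and Theorem~\ref{thm:obsfloor}. You also note that $H$ has well-defined orthonormal columns only for large $p$ on the almost sure event of Proposition~\ref{prop:dual}, which the paper leaves implicit.
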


\begin{proof}
Since $\sin^2\!\angle_p(h_j,\mathcal B)=\|\Pibp h_j\|^2$, the limit in
\eqref{eq:subspace} is the sum over $j=1,\dots,k$ of the limits
\eqref{eq:floor} established in Step~2 of the proof of
Theorem~\ref{thm:main}.  For the first equality, expand
\begin{eqnarray}
    \|\Pi_{\mathcal H}-\Pi_{\mathcal B}\|_F^2&=&\tr \Pi_{\mathcal H}+\tr \Pi_{\mathcal B}-2\tr(\Pi_{\mathcal H}\Pi_{\mathcal B}) \nonumber \\
&=&2k-2\sum_{j}h_j^{\!\top}\Pib h_j
=2\sum_{j}\big(1-\|\Pib h_j\|^2\big)=2\sum_{j}\|\Pibp h_j\|^2,
\end{eqnarray}
using $\|A\|_F^2 = \tr A^2$ for any symmetric matrix $A$, and that $H$ and $b$ each have $k$ orthonormal columns.

 For the
principal-angle identification: the cosines of the principal angles $\eta_i$ between
the $k$-dimensional subspaces $\col(H)$ and $\mathcal B$ are the singular
 values $\sigma_1,\dots,\sigma_k$ of $b^{\!\top}H$ \citep[\S6.4.3]{golubvanloan2013},
 so
$$\sum_{i}\sin^2\eta_i=k-\sum_i\sigma_i^2=k-\|b^{\!\top}H\|_F^2
=\sum_{j}\big(1-\|\Pib h_j\|^2\big) = \tfrac12\,\|\Pi_{\mathcal H}-\Pi_{\mathcal B}\|_F^2.$$
The final claim
combines \eqref{eq:subspace} with \eqref{eq:obsfloor} summed over $j$.
\end{proof}

\begin{remark}[Inconsistent subspace estimation]
Since each out-of-subspace term is strictly positive, the limit in \eqref{eq:subspace} is
strictly positive, which means the sample subspace $\col(H)$ is not a consistent
estimator of $\mathcal B$ in the Frobenius norm in $p$.
Instead, Corollary \ref{cor:subspace} provides us with a consistent estimator of the distance between the subspaces, as measured by the left hand side of \eqref{eq:subspace}.
\end{remark}

\subsection{In-subspace error is non-estimable {from data}}\label{subsec:nonestimable}

{
The exact split \eqref{eq:thmsplit} of Theorem~\ref{thm:main} expresses the
estimation error $\sin^2\!\angle_p(h_j,b_j)$ in terms of two angles: the
out-of-subspace angle $\angle_p(h_j,\mathcal B)$ between $h_j$ and the
$k$-dimensional loading subspace $\mathcal B\subset\R^p$, and the in-subspace
angle $\angle_p(\Pib h_j,b_j)$ between the target $b_j\in\mathcal B$ and the
projection $\Pib h_j\in\mathcal B$, which is an angle of rotation inside
$\mathcal B$.  We call $\sin^2\!\angle_p(\Pib h_j,b_j)$ the \emph{rotation
error}.  The previous subsection showed that the out-of-subspace error is
estimable.  Here we show that the rotation error is not, in a strong form: the
data do not merely fail to determine the rotation error; in the absence of further distributional assumptions on $F$. They place no
restriction on it at all.

The data matrix $Y=BF+Z$ is assembled from the loadings $B$, the realized
factor path $F$, and the realized specific returns $Z$; the factor covariance
$\Sigma_f=\E[f^{(l)}f^{(l)\top}]$ is not on the list.   Rather, $\Sigma_f$ is
a feature of the \emph{law} of the factor path rather than of the path itself,
and it enters our analysis of $b$ at exactly one point: the estimation targets
$b_1,\dots,b_k$ are the eigenvectors of $\Sigma_0=B\Sigma_fB^{\!\top}$, as in
\eqref{eq:sdt}.  Estimator and target are therefore driven by different
inputs, and, when $n$ is fixed, can be moved independently of one another.

Concretely, fix $B$, $F$ and $Z$, and let $\Sigma$ be an arbitrary positive
definite $k\times k$ matrix.  Assumption~\ref{asm:fm} requires only that the
marginals of the factor path share the second moment $\Sigma$; taking
$f^{(1)},\dots,f^{(n)}$ i.i.d.\ $\mathcal N(0,\Sigma)$ meets that requirement and puts
positive density on every point of $\R^{k\times n}$, so the path $F$ that we
have fixed is a legitimate realization under \emph{every} such $\Sigma$.  Except for Assumption \ref{asm:sep}, the
remaining assumptions do not mention $\Sigma$ at all.  The one
exception, Assumption~\ref{asm:sep},  we accommodate by holding the
population spectrum $\mu_1,\dots,\mu_k$ fixed, as follows.

Fixed choices of $B, F$, $Z$, and the eigenvalues $\mu_1,\dots,\mu_k$ of $K = \Sigma_f^{1/2} G_B \Sigma_f^{1/2}$ are compatible with a class of models
corresponding to different choices for $\Sigma_f$. To state the next theorem, we need two definitions.
First, for a positive definite
$\Sigma\in\R^{k\times k}$ in the role of the factor covariance $\Sigma_f$, let
$\nu_j(\Sigma)$ be the vector $\nu_j$ resulting from substituting $\Sigma$ in
place of $\Sigma_f$. (Recall $\nu_j$ is the $j$th eigenvector of $N$, and equation \eqref{eq:Nnclosed} shows the explicit dependence of $N$ on $\Sigma_f$).
The loadings $B$, the subspace
$\mathcal B=\col(B)$, the limit $G_B$, the realized path $F$ and the data
$Y=BF+Z$ do not vary with $\Sigma$.  (Both the frame $b$ and the scores
$\Phi$ move with $\Sigma$, but their product $b\Phi=BF$ does not.)

Second, call $\Sigma$ \emph{admissible}
if it is positive definite and
$K(\Sigma) := \Sigma^{1/2}G_B\Sigma^{1/2}$ has the same eigenvalues
$\mu_1>\cdots>\mu_k>0$ as $K = K(\Sigma_f)$.
The admissible $\Sigma$ are the possible covariance matrices for $f$ that do not change our fixed parameters, and are characterized in Lemma~\ref{lem:orbit} of Appendix~\ref{sec:orbit}.

\medskip
\begin{theorem}[Rotation error cannot be estimated from data alone]
\label{thm:rotrange}
Let $k\ge2$, suppose Assumptions~\ref{asm:fm}--\ref{asm:reg} hold, and fix
$j\in\{1,\dots,k\}$.  Hold fixed the loading sequence $B$, the realized factor
path $F$, the specific returns $Z$, the constant $\delta^2$, and the eigenvalues of $K$. Let
$\Sigma$ range over the admissible factor covariances.  Then
\begin{enumerate}
\item[(i)] the data $Y$, the sample directions $h_j$, the realized eigenvalues
$\lambda_j$, and hence the out-of-subspace error \eqref{eq:thmfloor},
are the same for every admissible $\Sigma$;
\item[(ii)] the rotation error
$\sin^2\!\angle\big(\nu_j(\Sigma),e_j\big)$ attains every value in
$[0,1]$; and consequently
\item[(iii)] the limiting estimation error \eqref{eq:thm} attains every value
in $\big[\,\delta^2/(n\lambda_j+\delta^2),\ 1\,\big]$.
\end{enumerate}
\end{theorem}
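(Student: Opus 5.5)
The plan is to make the dependence of $N$ on $\Sigma$ explicit, reduce the admissible set to an orbit under the orthogonal group $O(k)$, and observe that this orbit acts on $N$ by conjugation while leaving the data fixed. Throughout, write $\hat\Sigma_f:=FF^{\!\top}/n$, which is fixed by $F$, and $M:=\operatorname{diag}(\mu_1,\dots,\mu_k)$.

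\textbf{Step 1: explicit form of $N$.} Since $b\Phi=BF$ and $b^{\!\top}b=I_k$, we have $\Phi=b^{\!\top}BF$, so
\[
N^{(p)}=\Big(\tfrac{b^{\!\top}B}{\sqrt p}\Big)\hat\Sigma_f\Big(\tfrac{B^{\!\top}b}{\sqrt p}\Big).
\]
By Proposition~\ref{prop:gramlim}, up to column signs of $b$ (which do not affect any angle), $b^{\!\top}B/\sqrt p\to Q_0$. This limit satisfies
\[
Q_0^{\!\top}Q_0=G_B,\qquad Q_0\Sigma_fQ_0^{\!\top}=M .
\]
The first identity is the limit of $B^{\!\top}bb^{\!\top}B/p=B^{\!\top}B/p$. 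The second is the limit of $b^{\!\top}\Sigma_0b/p=\operatorname{diag}(\mu^{(p)}_j)$. Hence $N=Q_0\hat\Sigma_fQ_0^{\!\top}$.

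\textbf{Step 2: parametrize the admissible set.} The identity $Q_0^{\!\top}Q_0=G_B$ forces $Q_0=O\,G_B^{1/2}$ for some orthogonal $O\in O(k)$. Combining this with $Q_0\Sigma Q_0^{\!\top}=M$ gives
\[
\Sigma=G_B^{-1/2}O^{\!\top}MO\,G_B^{-1/2}.
\]
Conversely, every $O\in O(k)$ yields a positive definite $\Sigma$ with
\[
K(\Sigma)=\Sigma^{1/2}G_B\Sigma^{1/2}\sim (G_B^{1/2}\Sigma G_B^{1/2})=O^{\!\top}MO
\]
(similar matrices), so $K(\Sigma)$ has spectrum $\mu_1,\dots,\mu_k$ and $\Sigma$ is admissible. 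This is the content I expect from Lemma~\ref{lem:orbit}, and I would cite it there. For this $\Sigma$,
\[
N(\Sigma)=O\,\hat S\,O^{\!\top},\qquad \hat S:=G_B^{1/2}\hat\Sigma_fG_B^{1/2},
\]
and $\hat S$ depends only on $F$ and $G_B$.

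\textbf{Step 3: part (i).} The data $Y=BF+Z$ do not involve $\Sigma$, so $S^{(p)}$, $W^{(p)}$, $h_j$ and $\theta_j^{(p)}$ are invariant over the admissible set. Likewise $W_0=F^{\!\top}G_BF/n$ is invariant, so the $\lambda_j$ are invariant; they are also the eigenvalues of $\hat S$, consistent with the orbit picture. Theorem~\ref{thm:main}(ii) then gives the invariance of the out-of-subspace limit. One subtlety should be noted: $\mathcal B=\col(B)$ is invariant, so $\sin^2\angle_p(h_j,\mathcal B)$ itself does not move, even though the frame $b$ does.

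\textbf{Step 4: parts (ii) and (iii).} Let $u_j$ be the $j$th unit eigenvector of $\hat S$; its eigenvalues are distinct by Assumption~\ref{asm:reg}. Then $\nu_j(\Sigma)=\pm O u_j$. Since $k\ge2$, pick $j'\neq j$. For $t\in[0,\pi/2]$, choose $O_t\in O(k)$ with
\[
O_tu_j=\cos t\,e_j+\sin t\,e_{j'} .
\]
Such an $O_t$ exists because $O(k)$ acts transitively on the unit sphere. This gives $\sin^2\angle(\nu_j,e_j)=\sin^2t$, which sweeps $[0,1]$. For (iii), plug into \eqref{eq:thm}: the limit equals $a+(1-a)s$ with $a=\delta^2/(n\lambda_j+\delta^2)$ fixed by (i) and $s\in[0,1]$ arbitrary, so it covers $[a,1]$.

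The main obstacle I anticipate is Step 1: justifying the limit $b^{\!\top}B/\sqrt p\to Q_0$ and its two identities from Proposition~\ref{prop:gramlim}, with careful handling of signs. That convergence relies on $\mu_j^{(p)}\to\mu_j$ and on the distinctness in Assumption~\ref{asm:sep}, and each admissible $\Sigma$ must satisfy these hypotheses so that Theorem~\ref{thm:main} applies to it.
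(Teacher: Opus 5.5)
Your proposal is correct and follows essentially the paper's route: parametrize the admissible covariances as the orbit $\Sigma_O=G_B^{-1/2}O^{\top}\Lambda O\,G_B^{-1/2}$, show $N(\Sigma_O)=O\hat M O^{\top}$, and then use that $\mathrm{O}(k)$ acts transitively on the unit sphere to obtain (ii), with (iii) following by substitution into \eqref{eq:thm}. Your derivation of the orbit through the polar factorization $Q_0=OG_B^{1/2}$ of the limiting chart matrix is only a cosmetic variant of the paper's Gram-duality argument with $A=G_B^{1/2}\Sigma^{1/2}$. The leftover diagonal sign matrix relating $Q_0(\Sigma_O)$ to $OG_B^{1/2}$ does not affect $|\langle \nu_j,e_j\rangle|$, as you note.
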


The proof is given in Appendix~\ref{sec:orbit}.  For $k=1$ the theorem is
vacuous: the only orthogonal $1\times1$ matrices are $\pm1$, and the rotation
error vanishes identically.

Theorem~\ref{thm:rotrange} is stronger than the assertion that no consistent
estimator of the rotation error exists: it says that the data alone place no
restriction whatever on the limiting rotation angle.
The same is true even if the factor path $F$ is added to the available data.
Additional information, such as a prior distribution on the path $F$, is needed to draw any conclusions about the rotation error.

The deficiency is one of sample size $n$ rather than cross-sectional size $p$.
Growing the cross-section supplies no further draws of $f$ and so says nothing
about the population $\Sigma_f$ against which that realization is being
compared.  By Corollary~\ref{cor:n-limit}, the rotation error vanishes in the
large-$n$ limit.  Otherwise, a user will need an exogenous estimate of $G_B$ and $\Sigma_f$, and a prior distribution for $FF^\top/n$, to determine a distribution for the rotation error.
 See Proposition \ref{prop:gramlim} of Appendix \ref{sec:lemmas} for how to assemble these ingredients.

}

\section{Simulation} \label{sec:simulation}
\subsection{Model calibration and experimental design}
Guided by the empirical analysis in \cite{bayraktar2014}, we specify
a
three-factor instance of the model in~\eqref{eq:lfm}  based on the US
public equity market. We assume factor and specific returns are
independently and identically $t$-distributed over time with mean $0$, and $6$ and $5$
degrees of freedom, respectively.  Their annualized volatilities are in
Table~\ref{tab:volatilities}.

\begin{table}[h]
\centering
\caption{Annualized factor and specific volatilities}
\label{tab:volatilities}
\vspace{.1cm}
\begin{tabular}{lcccc}
\toprule
& Factor 1 & Factor 2 & Factor 3 & Specific \\
\midrule
Volatility & 0.16 & 0.08 & 0.06 & 0.40 \\
\bottomrule
\end{tabular}
\end{table}

We run simulations, each with $1000$
 paths.  In some experiments, we keep the number of observations $n$ fixed and grow the number of variables $p$.  In others, the reverse.

The simulation is implemented as follows. We choose $\nmax = 250$ days and $\pmax = 50000$ assets. We first form a $\pmax \times 3$ matrix $B$
using independent draws from the
 normal distributions $\mathcal N(1, 0.25)$, $\mathcal N(0,1)$, and $\mathcal N(0,1)$ for the three columns. This loading matrix $B$ is fixed for
 all 1000 paths, and corresponds to a limiting dual Gram matrix $G_B$:
 \begin{align*}
    G_B = \begin{pmatrix}
        1.25 & 0 & 0\\
        0& 1 & 0\\
        0 & 0 & 1
    \end{pmatrix}.
\end{align*}

For each of the 1000 simulation paths
we form
a $3 \times \nmax$ factor return matrix $F$ and a $\pmax \times \nmax$ specific return matrix $Z$.
The rows of $F$, corresponding to factors 1,2, and 3, are drawn iid from mean zero $t$-distributions as described above.  All entries of $Z$ are drawn iid from a mean zero $t$-distribution with 5 degrees of freedom and standard deviation 0.4.

This provides us, for each simulation path, a $\pmax \times \nmax$ data matrix $Y$ determined by
\begin{equation}
    Y = BF + Z.
\end{equation}
For each path we may vary $p \leq \pmax$ and $n \leq \nmax$ by forming the nested $p \times n$ matrices $Y = Y^\pn$ from the first $p$ rows and first $n$ columns of $Y$.  In this way, for each of the 1000 simulations we have a family of data matrices $Y^\pn$ indexed by $(p,n)$, and hence sample covariance matrices with their eigenvalues and eigenvectors, for analysis in the next sections.

The experiments that follow depend on a single draw of factor exposures $B$.  While this choice does not affect asymptotic results so long as the assumptions in Section~\ref{sec:data} are satisfied, it has an impact on results for small $p$.

\subsection{Validation of the asymptotics of principal direction error guaranteed in Theorem~\ref{thm:main}}\label{subsec:validation}

We next fix the number of observations at $n=63$, corresponding to three months of daily data, and increase $p$ from $100$ to $50,000$ to illustrate the rate at which the errors in principal directions $\sin^2\angle_p(h_j,b_j)$ and their constituents converge to the asymptotic limits of Theorem~\ref{thm:main}.  Simulation results using 1000 independent nested $p$-trajectories are presented in Figure~\ref{fig:decomp_p_sweep_combined}.

The dots in the top panel show the average $\sin^2\angle_p(h_j,b_j)$  for each factor $j=1,2,3$, which corresponds to the left hand side of equation $\eqref{eq:thm}$.   The blue fill represents the asymptotic out-of-subspace error
$$ \operatorname{OE} =
\frac{\delta^2}{n\lambda_j+\delta^2}
$$
and the orange represents the asymptotic in-subspace error
$$ \operatorname{IE} =
\frac{n\lambda_j}{n\lambda_j+\delta^2}\,
              \sin^2\!\angle(\nu_j,\,e_j).
$$
These component errors sum to total error:
$$ \operatorname{TE} =  \operatorname{OE} +\operatorname{IE}.$$

Combined, we see that the dots converge to the sum
$ \operatorname{TE} = \operatorname{OE}+\operatorname{IE}$ of the blue and orange sections.  At $p=500$, the errors are somewhat large, translating $\angle_p(h_j,b_j)$ to $17^\circ$ for Factor 1, $39^\circ$ for Factor 2 and $50^\circ$ for Factor 3.

Also note that the out-of-subspace error (blue) comprises the majority  of the total asymptotic error (blue plus orange) for each factor (see also Table~\ref{tab:perc_n}). This indicates that our estimation error is primarily due to the influence of the idiosyncratic returns on the principal components, pushing them away from the span of the population systematic factors.

The bottom panel of Figure~\ref{fig:decomp_p_sweep_combined} shows the distribution of the pathwise differences of each of these elements, subtracting finite-$p$ values from their asymptotic limits:
\begin{itemize}
\item grey boxes: $\operatorname{TE} - \sin^2 \angle_p (h_j, b_j) $
\item blue boxes: $\operatorname{OE} -\sin^2\!\angle_p(h_j,\mathcal B) $
\item orange boxes: $\operatorname{IE} - \cos^2\!\angle_p(h_j,\mathcal B) \sin^2\!\angle_p\big(\Pib h_j,\,b_j\big) $
\end{itemize}

This figure suggests that the asymptotic value is an underestimate of the true finite-$p$ error. While this is the case for the choice of factor exposures $B$ in the experiment reported here, this is not always the case.  Analogous comments apply to all figures in this section.

\begin{figure}[h]
    \centering
    \includegraphics[width=\textwidth]{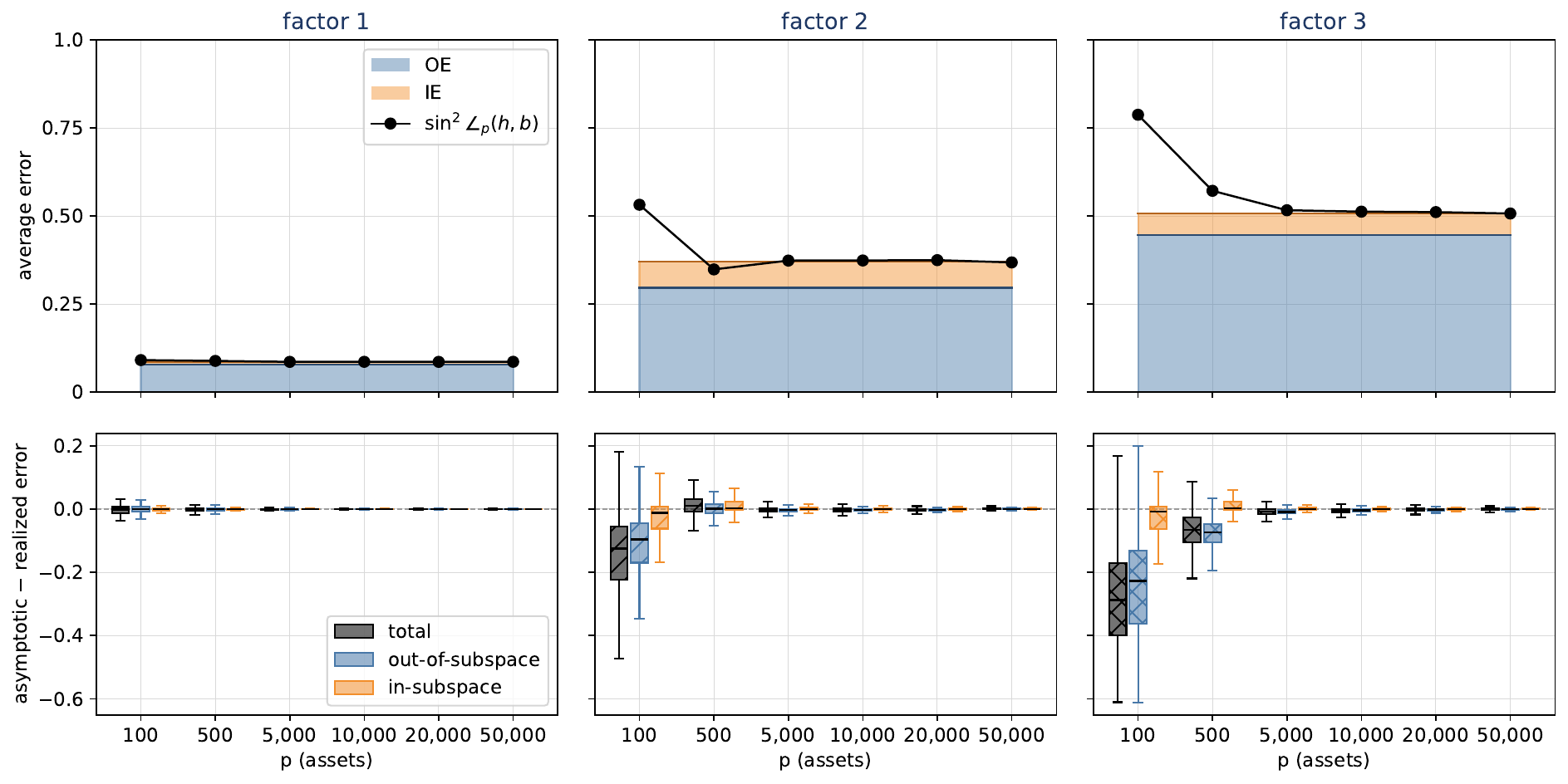}
   \caption{Error in principal directions estimated from $n=63$ observations based on 1000 simulations. The $x$-axis is the number of variables $p$.  Top panel:  The $y$-axis is average error in units of $\sin^2$ across the $1000$ simulations. Dots are average total error $\sin^2 \angle_p (h_j, b_j)$ for each $p$. Blue and orange fill represent average asymptotic limits of out-of-subspace ($\operatorname{OE}$)  and in-subspace error ($\operatorname{IE}$), which sum to asymptotic limits of total error ($\operatorname{TE}$). Bottom panel:  The $y$-axis is path-by-path differences between the asymptotic limit and the finite-$p$ error, with grey bars for $\operatorname{TE} -\sin^2\!\angle_p(h_j, b_j)$, blue bars for  $\operatorname{OE} - \sin^2\!\angle_p(h_j,\mathcal B)$, and orange bars for $\operatorname{IE} - \cos^2\!\angle_p(h_j,\mathcal B) \sin^2\!\angle_p\big(\Pib h_j,\,b_j\big)$. Differences must tend to zero by Theorem~\ref{thm:main}.}
   \label{fig:decomp_p_sweep_combined}
\end{figure}

Figure~\ref{fig:rot_p_sweep} summarizes simulation results for the rotation error $\sin^2\!\angle_p\big(\Pib h_j,\,b_j\big)$, which
reflects sampling error in factor returns.  This error persists in the $p$-asymptotic limit when $n$ stays fixed.

The dots in the top panel show the average   $\sin^2\!\angle_p\big(\Pib h_j,\,b_j\big)$
for each factor $j=1,2,3$, which corresponds to the left hand side of \eqref{eq:thmrot}.  The green fill represents the asymptotic rotation error
$$ \operatorname{RE} =
  \sin^2\!\angle\big(\nu_j,\,e_j\big),
$$
which is given by the right hand side of \eqref{eq:thmrot}.

The bottom panel of Figure~\ref{fig:rot_p_sweep} shows the distribution of pathwise differences, subtracting finite-$p$ values from their asymptotic limits:

\begin{itemize}
\item green boxes: $ \operatorname{RE}  - \sin^2\!\angle_p\big(\Pib h_j,\,b_j\big)$
\end{itemize}

\begin{figure}[H]
    \centering
    \includegraphics[width=\textwidth]{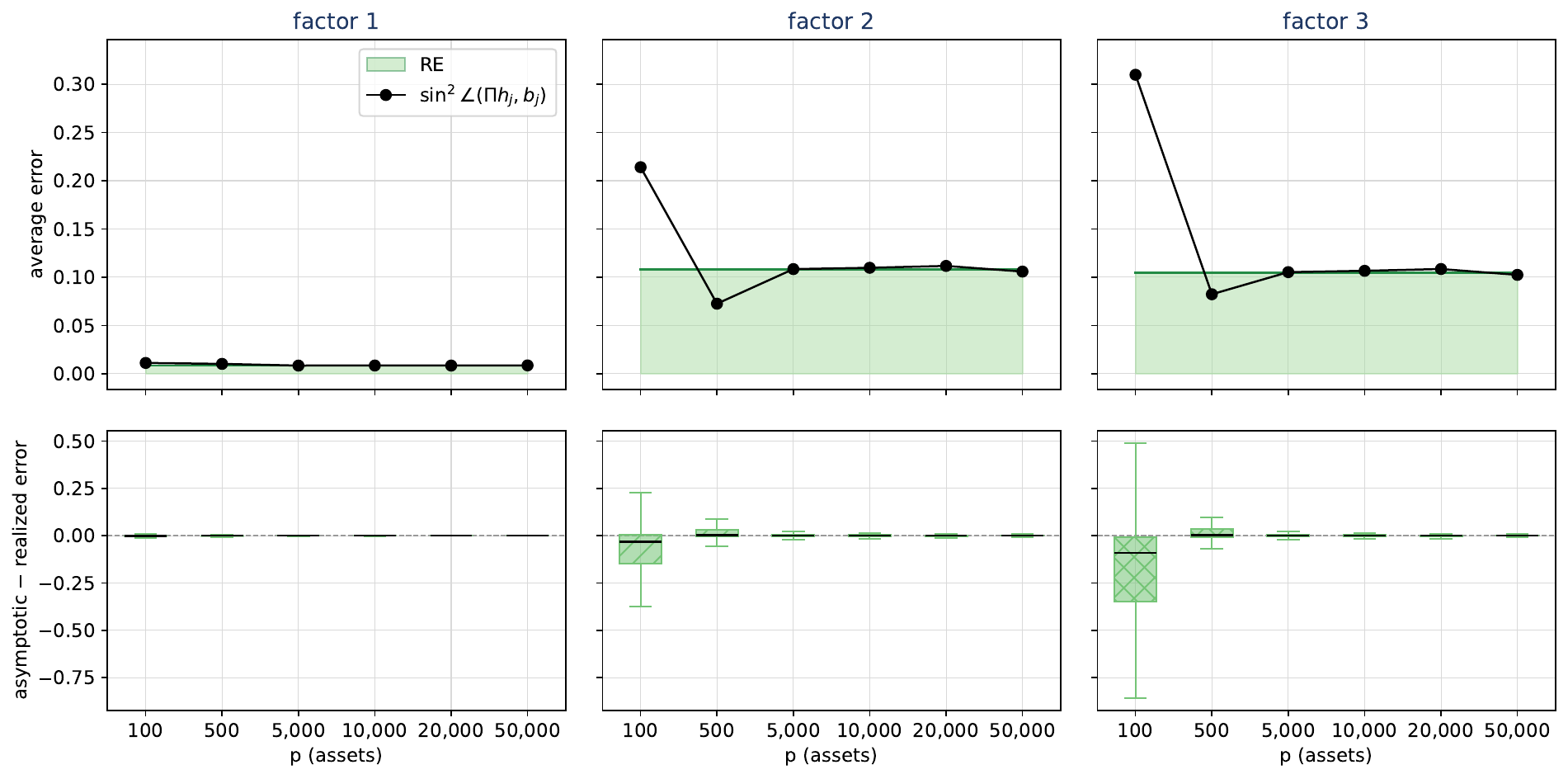}
   \caption{Rotation error in principal directions estimated from $n=63$ observations based on 1000 simulations. The $x$-axis is the number of variables $p$.   Top panel:  The $y$-axis is average error in units of $\sin^2$ across the $1000$ simulations. Dots are average rotation error  $\sin^2\!\angle_p\big(\Pib h_j,\,b_j\big)$ for each $p$.
   Green fill represents average rotation error  asymptotic limit. ($\operatorname{RE}$). Bottom panel: The $y$-axis is path-by-path differences between the asymptotic limit  and the finite-$p$ error with green bars for $\operatorname{RE} - \sin^2\!\angle_p\big(\Pib h_j,\,b_j\big)$. Differences must tend to zero by Theorem~\ref{thm:main}.\label{fig:rot_p_sweep}}
\end{figure}

\subsection{Validation of the asymptotics of finite-$p$ estimates of principal direction error guaranteed in Theorem~\ref{thm:obsfloor}}

Theorem~\ref{thm:obsfloor} states that out-of-subspace error can be estimated in terms of eigenvalues of a sample covariance matrix, and the estimate has the same almost sure limit in $p$ as the error.  For convenience, we repeat the result here, with $\theta_j^{(p)}$ equal to the $j$th eigenvalue of the scaled dual sample covariance matrix $W^{(p)}$ and $\ell = \ell^{(p)}$ its average bulk eigenvalue:
\begin{equation*}
  \limp  \frac{\ell^{(p)}}{\theta_j^{(p)}}
   \;=\;
   \frac{\delta^2}{\,n\lambda_j+\delta^2\,}
   \;=\;\limp  \sin^2\!\angle_p(h_j,\mathcal B).
\end{equation*}
Here we examine the accuracy of the estimate.

As in Subsection~\ref{subsec:validation}, we fix the number of observations at $n=63$ and increase $p$ from $100$ to $50,000$.

The dots in the top panel of Figure~\ref{fig:growing_p_obs_est_floor} show the average out-of-subspace error $ \sin^2\!\angle_p(h_j,\mathcal B)$  for each factor $j=1,2,3$. The stars show its average finite-$p$ estimate 
$ \ell^{(p)}/\theta_j^{(p)}
$   
of out-of-subspace error.
The blue fill represents the average asymptotic out-of-subspace error $\operatorname{OE}$.
We see that both the dots and the stars converge to
$\operatorname{OE}$.  For small $p$, however, the average finite-$p$ estimate and average true finite-$p$ error differ, especially for factors 2 and 3.

The bottom panel of Figure~\ref{fig:growing_p_obs_est_floor} examines this difference in detail.  It shows the distribution of the pathwise differences of the finite-$p$ estimate and error:
\begin{itemize}
\item red boxes: $\ell^{(p)}/\theta_j^{(p)} -\sin^2\!\angle_p(h_j,\mathcal B)$
\end{itemize}

In our experiment, the out-of-subspace errors $\sin^2\!\angle_p(h_j, \mathcal{B})$ were substantial.  Translating from units of sine-squared to angles, we found finite-$p$ averages of $16.3^\circ$, $33.1^\circ$ and $47.0^\circ$ for Factors 1, 2 and 3 at $p = 500$.  This is an essential point, since out-of-subspace error serves as a floor for the total error.   The finite-$p$ estimates $\ell^{(p)}/\theta_j^{(p)}$ tended to  underestimate the true error in our experiment.  However, we have not explored the extent to which this effect is generic.

\begin{figure}[H]
    \centering
    \includegraphics[width=\textwidth]{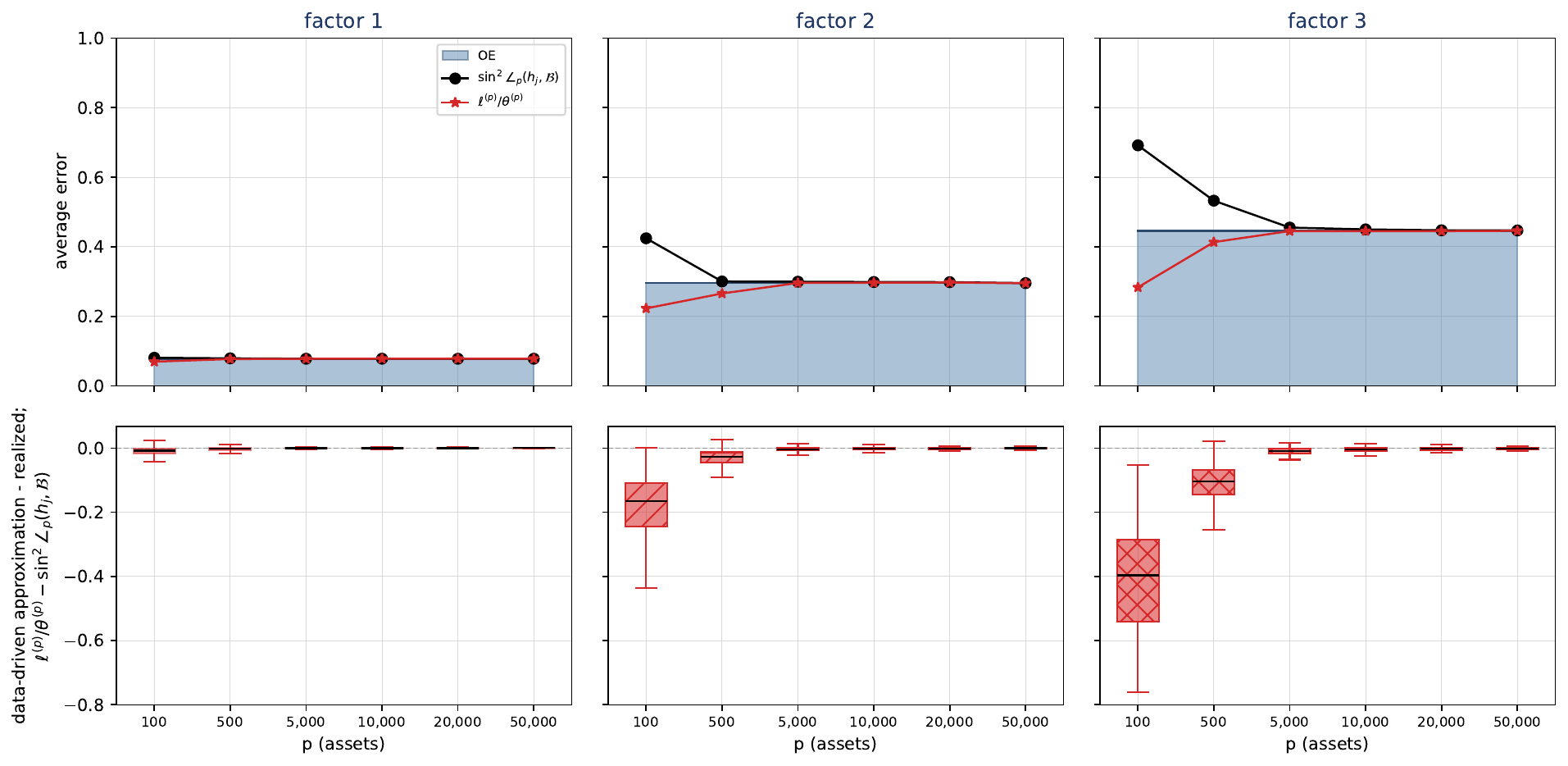}
    \caption{Out-of-subspace error in principal directions estimated from $n=63$ observations based on 1000 simulations.  The $x$-axis is the number of variables $p$. Top panel: y-axis is average out-of-subspace error in units of $\sin^2$.  Black dots are average of $\sin^2\angle_p(h_j,\mathcal{B})$ for each $p$.  Red stars are average of data-driven estimates $\ell^{(p)}/\theta_j^{(p)}$ of OE.  Blue fill represents average OE.
    Bottom panel: boxplot of path-by-path differences between data-driven estimates and realized errors, given by $\ell^{(p)}/\theta_j^{(p)} - \sin^2 \angle_p (h_j, \mathcal{B})$ for each $p$.  Differences must tend to zero by Theorem~\ref{thm:obsfloor}.}   
    \label{fig:growing_p_obs_est_floor}
\end{figure}

\subsection{Dependence of the error in principal directions on the number of observations}\label{subsec:varyn}
How much data are required to drive estimation error in principal directions to a reasonable level?
We address the question experimentally by increasing the number of
observations $n$ from $20$ to $250$, with the number of variables $p$ fixed
at $3000$, the approximate number of securities in the Russell 3000
Index. Figure~\ref{fig:decomp_n_sweep} mirrors Figure~\ref{fig:decomp_p_sweep_combined}, with the roles of $p$ and $n$ interchanged.

The top panel of
Figure~\ref{fig:decomp_n_sweep} shows averages over the 1000 simulation paths, with
dots representing finite-$p$ average errors $\sin^2 \angle_p(h_j, b_j)$, and blue and orange bars
representing out-of-subspace and in-subspace components of the asymptotic
error (from the right-hand-side of~\eqref{eq:thm}). Translating from units of
sine-squared to angles, average errors decreased from $30^\circ$ to
$8.5^\circ$ for factor 1, from $56^\circ$ to $19^\circ$ for factor 2,
and from $66^\circ$ to $24^\circ$ for factor 3.  The percentage of
total error (in sine-squared units) accounted for by the average out-of-subspace component of the
asymptotic error stays
between $80\%$ and $93\%$ for all three factors, across our range of $n$.  The
percentages are shown in Table~\ref{tab:perc_n}.

\begin{table}[H]
\caption{Out-of-subspace share (\% of predicted error), fixed $p=3000$.}
\centering
\vspace{.1cm}
\begin{tabular}{lrrrrrrrrr}
\toprule
n & 20 & 30 & 45 & 60 & 63 & 90 & 120 & 180 & 250 \\
\midrule
1 & 88.5 & 88.6 & 89.6 & 90.0 & 90.0 & 90.1 & 91.4 & 91.7 & 91.7 \\
2 & 84.9 & 82.6 & 81.9 & 81.2 & 81.1 & 80.2 & 80.9 & 81.4 & 83.8 \\
3 & 92.7 & 90.5 & 89.7 & 89.1 & 89.0 & 88.2 & 88.4 & 88.8 & 90.4 \\
\bottomrule
\end{tabular}
\label{tab:perc_n}
\end{table}

The ranges of path-by-path differences between the finite-$p$ errors and
the asymptotic limits shown in the bottom panel of
Figure~\ref{fig:decomp_n_sweep} decrease steadily for all three factors as
$n$ increases, but remain non-negligible even at $n=250$. Asymptotic limits tended to be less than finite-$p$ errors for factors 2 and 3. For this range of $n$ and $p$, the $p$-asymptotic limit of Theorem~\ref{thm:main} is a considerably better approximation of the finite $n,p$ true
error $\sin^2 \angle_p(h_j, b_j)$ than is zero,  the $n$-asymptotic limit. This may be viewed as support for use of the HL asymptotic regime for these
values of $n$ and $p$.

\begin{figure}[H]
    \centering
    \includegraphics[width=\textwidth]{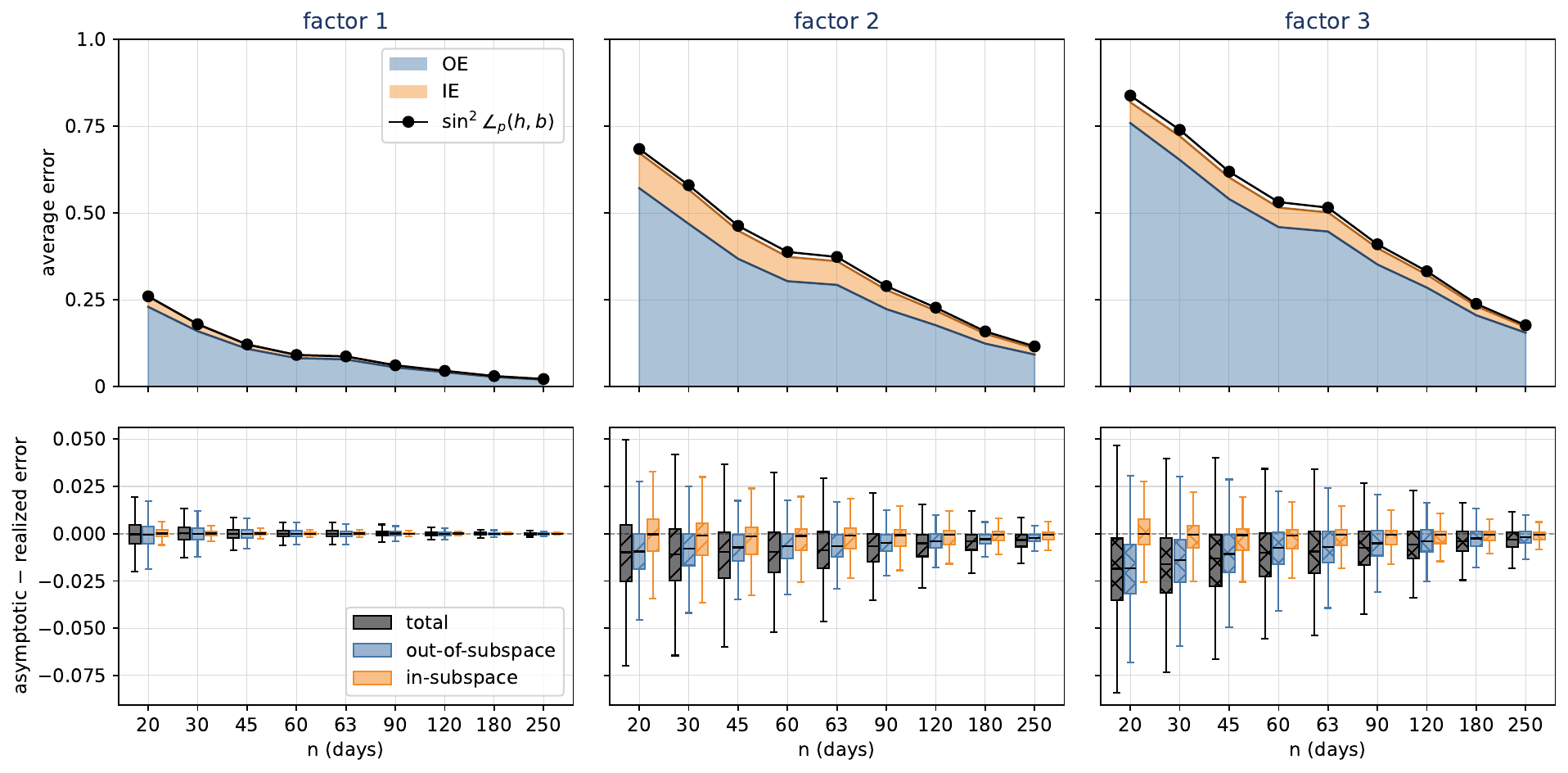}
    \caption{
    \label{fig:decomp_n_sweep}
   Errors in factor exposures of $p = 3000$  variables based on 1000 simulations.  The $x$-axis is the number of observations $n$.   Top panel:  The $y$-axis is average error in units of $\sin^2$ across the $1000$ simulations. Dots are average total error $\sin^2 \angle_p (h_j, b_j)$ for each $n$. Blue and orange fill represent average asymptotic limits of out-of-subspace ($\operatorname{OE}$)  and in-subspace error ($\operatorname{IE}$), which sum to asymptotic limits of total error ($\operatorname{TE}$).  Bottom panel: 
   The $y$-axis is path-by-path differences between the asymptotic limit and the finite-$p$ error, with grey bars for $\operatorname{TE} -\sin^2\!\angle_p(h_j, b_j)$.
   }
\end{figure}

\section{Conclusion}

In this article, we analyzed errors in principal directions, unit-length linear combinations of the exposures of a factor model that explain maximum variance.

Orthogonal decomposition yields an expression of  the error in the $j$th estimated principal direction as a sum of two terms.  The first is out-of-subspace error, the angle between the estimate $h_j$ and the space $\mathcal B$ spanned by the true principal directions. The second is in-subspace error, the angle between the projection of the estimate onto $\mathcal B$ and the true principal direction, $b_j$.  We show that each of the terms, and therefore the error itself, has an almost sure limit in the HL regime as the number of variables  tends to infinity.

The  summands differ in character.  The out-of-subspace error can be approximated in terms of the eigenvalues of a sample covariance matrix, and the approximation has the same almost sure limit as the error, itself. It serves as a  data-driven lower bound for the total error. In contrast, the in-subspace error
cannot be estimated from data alone.

In a simulation specified with parameters guided by an empirical analysis of the  US equity market, we found support for our theoretical results and observed:
\begin{itemize}
   \item With even a year's worth of daily data (250 observations), errors in principal directions were large enough to demand attention from researchers who rely on principal component analysis.
    \item The out-of-subspace error dominated the total error.
    \item The data-driven estimate of out-of-subspace error was, at once, large enough to demand attention and typically smaller than the error itself.
\end{itemize}

The quantification of  error in principal directions highlights the utility of HL regime asymptotics.  It can shed light on the vast range of applications of principal component analysis in the physical and social sciences. A researcher using estimated principal directions for financial risk management and portfolio construction, as control vectors in regressions, or in other applications, can use the results in this article to gauge
the magnitude of the finite-$n$ error, which can have profound implications for  economic decisions and scientific conclusions.
\appendix
\section{Supporting results}  \label{sec:lemmas}

The results below provide the technical support for the proof of Theorem~\ref{thm:main} and surrounding discussion.

\subsection{Matrix preliminaries}


\begin{lemma}[Gram duality]\label{lem:gramdual}
Let $A$ be a nonzero matrix such that $AA^{\!\top}$ has simple nonzero
eigenvalues.  If $(\lambda,v)$ is the $j$th eigenpair of $AA^{\!\top}$ with $\lambda \neq 0$, then
$(\lambda,\,A^{\!\top}v/\sqrt\lambda)$ is the $j$th eigenpair of
$A^{\!\top}A$, and conversely.
\end{lemma}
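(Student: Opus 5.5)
The plan is to verify the eigenvector relation directly, then check that the normalization holds and that the ordering of eigenvalues is preserved. The converse follows by symmetry, applying the same argument with $A^{\!\top}$ in place of $A$.

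First, let $(\lambda,v)$ be an eigenpair of $AA^{\!\top}$ with $\|v\|=1$ and $\lambda\neq0$, and set $u:=A^{\!\top}v$. Then
\[
A^{\!\top}A\,u=A^{\!\top}(AA^{\!\top}v)=\lambda A^{\!\top}v=\lambda u,
\]
so $u$ satisfies the eigen-equation for $A^{\!\top}A$ with eigenvalue $\lambda$. To see that $u\neq0$ and to normalize it, compute
\[
\|u\|^2=v^{\!\top}AA^{\!\top}v=\lambda\|v\|^2=\lambda .
\]
This gives $\lambda>0$, since $AA^{\!\top}$ is positive semidefinite, and shows that $u/\sqrt\lambda$ is a unit eigenvector of $A^{\!\top}A$ for $\lambda$.

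The step that needs care is the index $j$, that is, showing that $\lambda$ is the $j$th eigenvalue of $A^{\!\top}A$ and is simple there. I would argue that the maps $v\mapsto A^{\!\top}v$ and $u\mapsto Au$ are mutually inverse up to the factor $\lambda$ between the $\lambda$-eigenspaces of the two matrices. Indeed, $A(A^{\!\top}v)=\lambda v$, so $A^{\!\top}$ is injective on the $\lambda$-eigenspace of $AA^{\!\top}$, and symmetrically $A$ is injective on the $\lambda$-eigenspace of $A^{\!\top}A$. Hence each nonzero eigenvalue has the same multiplicity in $AA^{\!\top}$ and in $A^{\!\top}A$. It follows that the nonzero eigenvalues of $A^{\!\top}A$ are exactly those of $AA^{\!\top}$, are all simple, and appear in the same decreasing order. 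The zero eigenvalues fall below every positive one, so they do not disturb the indexing. Therefore $\lambda$ occupies position $j$ in both spectra.

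For the converse, I would apply the argument above with $A^{\!\top}$ in place of $A$. This shows that a $j$th eigenpair $(\lambda,u)$ of $A^{\!\top}A$ yields the $j$th eigenpair $(\lambda,Au/\sqrt\lambda)$ of $AA^{\!\top}$. It also gives consistency of the two directions: starting from $u=A^{\!\top}v/\sqrt\lambda$, we get
\[
\frac{Au}{\sqrt\lambda}=\frac{AA^{\!\top}v}{\lambda}=v,
\]
so the two maps recover each other's eigenvectors, up to the usual sign ambiguity of eigenvectors.
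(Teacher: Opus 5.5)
Your proposal is correct and follows essentially the same route as the paper. Both verify $A^{\top}A(A^{\top}v)=\lambda A^{\top}v$, normalize via $\|A^{\top}v\|^2=\lambda$, and use that $v\mapsto A^{\top}v/\sqrt\lambda$ and $u\mapsto Au/\sqrt\lambda$ are mutually inverse between the $\lambda$-eigenspaces to match eigenvalues and their order; your injectivity argument for equal multiplicities and your note that the zero eigenvalues lie below the positive ones simply spell out what the paper asserts in one line.
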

\begin{proof}
From $AA^{\!\top}v=\lambda v$ with $\lambda>0$, set $u:=A^{\!\top}v/\sqrt\lambda$.
Then $A^{\!\top}A\,u=A^{\!\top}(AA^{\!\top}v)/\sqrt\lambda=\lambda u$, and
$\|u\|^2=v^{\!\top}AA^{\!\top}v/\lambda=\|v\|^2=1$.  The maps
$v\mapsto A^{\!\top}v/\sqrt\lambda$ and $u\mapsto Au/\sqrt\lambda$ are
mutually inverse bijections between the (one-dimensional) $\lambda$-eigenspaces,
and they preserve the ordering of the shared nonzero eigenvalues.
\end{proof}
\medskip


\begin{lemma}[Eigenpair convergence and sign pinning]\label{lem:econv}
Let symmetric $A^{(p)}\in\R^{m\times m}$ converge entrywise to $A$, and let
$\zeta$ be a simple eigenvalue of $A$ with unit eigenvector $v$.
\begin{enumerate}
\item[(i)] For large $p$, $A^{(p)}$ has a simple eigenvalue
$\zeta^{(p)}\to\zeta$, whose unit eigenvectors $v^{(p)}$ (regardless of sign choice) satisfy
$\angle(v^{(p)},v)\to0$; equivalently, $|\langle v^{(p)},v\rangle|\to1$.
\item[(ii)] If unit vectors $u^{(p)},u\in\R^m$ satisfy
$\angle(u^{(p)},u)\to0$, then $\langle u^{(p)},u\rangle$ is eventually
nonzero, and the representatives pinned by
$\sigma^{(p)}:=\operatorname{sign}\langle u^{(p)},u\rangle$ converge:
$\sigma^{(p)}u^{(p)}\to u$.
\end{enumerate}
\end{lemma}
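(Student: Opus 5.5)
The plan is to treat the lemma as standard perturbation theory for symmetric matrices of fixed size $m$, using the continuity of eigenvalues (Weyl) and of spectral projectors (Davis--Kahan, or the Riesz resolvent integral). Entrywise convergence in fixed dimension $m$ is equivalent to convergence in operator norm, so we may set $\varepsilon_p:=\|A^{(p)}-A\|_2\to0$.

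For (i), let $\zeta$ be the $r$th eigenvalue of $A$ in decreasing order, and let $g>0$ be its distance to the rest of the spectrum of $A$. By Weyl's inequality (\cite{Bhatia}, III.2), each ordered eigenvalue of $A^{(p)}$ lies within $\varepsilon_p$ of the corresponding eigenvalue of $A$. Once $\varepsilon_p<g/2$, the $r$th eigenvalue $\zeta^{(p)}$ of $A^{(p)}$ is within $\varepsilon_p$ of $\zeta$, while every other eigenvalue of $A^{(p)}$ is at distance more than $g/2$ from it. Hence $\zeta^{(p)}$ is simple and converges to $\zeta$.

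For the eigenvectors, apply the Davis--Kahan $\sin\Theta$ theorem to the one-dimensional eigenspaces:
\[
\sin\angle(v^{(p)},v)\le \frac{\|(A^{(p)}-A)v\|}{g/2}\le \frac{2\varepsilon_p}{g}\to0 .
\]
A self-contained alternative is to write $v^{(p)}=c\,v+s\,u$ with $u\perp v$ a unit vector and $c^2+s^2=1$. Applying $A-\zeta^{(p)}$ gives $(\zeta-\zeta^{(p)})c\,v+s(A-\zeta^{(p)})u=-(A^{(p)}-A)v^{(p)}$. The operator $A-\zeta^{(p)}$ restricted to $v^\perp$ has smallest singular value at least $g-\varepsilon_p$, and the two terms on the left are orthogonal. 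So $|s|(g-\varepsilon_p)\le\varepsilon_p$, giving $|s|\to0$. Since only $|c|=|\langle v^{(p)},v\rangle|$ and $|s|$ appear, the bound does not depend on the sign choice of $v^{(p)}$. By the angle definition \eqref{eq:angledef}, this is exactly $|\langle v^{(p)},v\rangle|\to1$.

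For (ii), $\angle(u^{(p)},u)\to0$ means $|\langle u^{(p)},u\rangle|\to1$, so the inner product is eventually nonzero and $\sigma^{(p)}$ is well defined. Then
\[
\|\sigma^{(p)}u^{(p)}-u\|^2=2-2\sigma^{(p)}\langle u^{(p)},u\rangle=2-2|\langle u^{(p)},u\rangle|\to0 .
\]
The only point needing care is the uniform spectral gap in (i), which ensures the eigenvector $v^{(p)}$ is uniquely determined up to sign. It follows from Weyl once $\varepsilon_p<g/2$, so I expect no real obstacle; the rest is routine.
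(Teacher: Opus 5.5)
Your proof is correct. Your treatment of the eigenvalues in (i) and all of (ii) matches the paper's; the difference is in the eigenvector convergence in (i).

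The paper argues softly, by compactness. Every subsequence of $v^{(p)}$ has a further subsequence converging to some unit vector $\bar v$. Passing to the limit in $A^{(p)}v^{(p)}=\zeta^{(p)}v^{(p)}$ gives $A\bar v=\zeta\bar v$, so $\bar v=\pm v$ by simplicity, and hence $|\langle v^{(p)},v\rangle|\to1$.

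You instead prove a quantitative perturbation bound, either via Davis--Kahan or via your orthogonal split $v^{(p)}=cv+su$. The split is valid because $A$ leaves $v^\perp$ invariant and its spectrum there lies at distance at least $g-\varepsilon_p$ from $\zeta^{(p)}$. This yields $\sin\angle(v^{(p)},v)\le\varepsilon_p/(g-\varepsilon_p)$.

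The compactness route needs no gap constant and no tool beyond Weyl's inequality. That is all the paper's qualitative almost-sure limits require.

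Your route buys an explicit rate, linear in $\|A^{(p)}-A\|$ with constant governed by the inverse spectral gap. Any rate for $W^{(p)}\to W$ would then transfer directly to $w_j^{(p)}$, and through it to the convergence of the error terms in Theorem~\ref{thm:main}. It also makes visible that small realized gaps among $\lambda_1+\delta^2/n,\dots,\lambda_k+\delta^2/n,\delta^2/n$ slow that convergence, which the compactness argument hides. As a small bonus, your bound applies to every unit vector in the $\zeta^{(p)}$-eigenspace, so simplicity of $\zeta^{(p)}$ also falls out of it once $\varepsilon_p<g/2$.
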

\begin{proof}
\emph{(i)} Weyl's inequality gives $|\lambda_i(A^{(p)})-\lambda_i(A)|\le\|A^{(p)}-A\|\to0$,
so the eigenvalue $\zeta^{(p)}$ nearest $\zeta$ is, for all large enough $p$,  the unique one
within half the spectral gap of $\zeta$, is simple, and $\zeta^{(p)}\to\zeta$.

To show $c_p := |\langle v^{(p)}, v\rangle| \to 1$, it suffices to establish that every subsequence $c_{p_k}$ has a further subsequence converging to 1.

Let $p_k$ define a subsequence $c_{p_k} = |\langle v^{(p_k)}, v\rangle| $. Since the $v^{(p_k)}$ live on the compact unit sphere, there is a further subsequence $\{p(j)\}$ of $\{p_k\}$ such that $v^{(p(j))}$ converges to a limiting unit vector $\bar v$.  Hence
$$
A \bar v = \lim_{j \to \infty} A^{(p(j))} v^{(p(j))} = \lim_{j \to \infty} \zeta^{(p(j))} v^{(p(j))} = \zeta \bar v.
$$
By simplicity of the eigenvalue $\zeta$, this means $\bar v = \pm v$. Hence
$$
c_{p(j)} = |\langle v^{(p(j))},  v \rangle | = |\langle v^{(p(j))},  \bar v \rangle | \to 1
$$
as desired.

\emph{(ii)} From $\angle(u^{(p)},u)\to0$ we have $|\langle u^{(p)},u\rangle|\to1$,
so the inner product is eventually nonzero, $\sigma^{(p)}$ is well defined,
and $\langle\sigma^{(p)}u^{(p)},u\rangle=|\langle u^{(p)},u\rangle|\to1$;
hence $\|\sigma^{(p)}u^{(p)}-u\|^2
=2\big(1-\langle\sigma^{(p)}u^{(p)},u\rangle\big)\to0$.

\end{proof}
\medskip

\subsection{The principal coordinate chart at finite $p$}

The following lemma records the change of variables from the loadings $B$ and
factor returns $f$ of \eqref{eq:lfm} to the principal coordinates $b$ and
$\phi$ of Section~\ref{sec:fac-mod}.
\medskip

\begin{lemma}[Change of basis]\label{lem:basis}
    Given the $p \times k$  rank $k$ matrices $B$ and $b$ and other notation defined in Section \ref{sec:fac-mod},
\begin{enumerate}
    \item $C = (B^\top B)^{-1}B^\top b$ is the unique invertible $k \times k$ matrix such that $b = BC$;
    \item $C^{-1} = b^\top B$ and hence
    \begin{equation}
        \phi := C^{-1}f = b^\top Bf ;
    \end{equation}
    and
    \item $\E[\phi \phi^\top] = \Delta_0$.
\end{enumerate}
\end{lemma}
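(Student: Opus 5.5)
The plan is to verify the three items in order, using only the defining properties of $b$ from \eqref{eq:sdt}--\eqref{eq:borth}. The key observation is that $b$ has orthonormal columns spanning $\mathcal B=\col(B)$ and $B$ has rank $k$. Hence each column of $b$ is a unique linear combination of the columns of $B$. In particular there is a unique $k\times k$ matrix $C$ with $b=BC$.

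\textbf{Item 1.} Multiply $b=BC$ on the left by $(B^\top B)^{-1}B^\top$. This is a left inverse of $B$, which exists since $B^\top B$ is invertible when $B$ has rank $k$. It gives $C=(B^\top B)^{-1}B^\top b$. Existence of $C$ follows from $\col(b)\subseteq\col(B)$; uniqueness follows from the injectivity of $B$. Invertibility holds because $b=BC$ has rank $k$, which forces $\operatorname{rank} C=k$.

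\textbf{Item 2.} Compute $b^\top B\,C=b^\top b=I_k$. Since $C$ is square and invertible, this shows $C^{-1}=b^\top B$. Then define $\phi:=C^{-1}f=b^\top Bf$. This gives $b\phi=bb^\top Bf=\Pi Bf=Bf$, because $\Pi=bb^\top$ fixes $\col(B)$. So $y=b\phi+z$ is exactly the model \eqref{eq:lfm}, which is the content of \eqref{eq:lfmpd}.

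\textbf{Item 3.} Compute
\[
\E[\phi\phi^\top]=b^\top B\,\Sigma_f\,B^\top b=b^\top\Sigma_0 b.
\]
Substituting the spectral decomposition $\Sigma_0=b\Delta_0b^\top$ from \eqref{eq:sdt} and using $b^\top b=I_k$ gives $b^\top b\,\Delta_0\,b^\top b=\Delta_0$.

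The only step that needs any care is the identification $\col(b)=\col(B)$. This is the step I would check first. Since $\Sigma_f$ is positive definite and $B$ has rank $k$, $\Sigma_0=B\Sigma_fB^\top$ has column space exactly $\col(B)$. It also has exactly $k$ positive eigenvalues, so its $k$ unit eigenvectors for nonzero eigenvalues span $\col(B)$. Everything else is a direct algebraic computation.
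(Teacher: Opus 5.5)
Your proof is correct and follows the paper's argument essentially step for step. You use the same left-inverse computation for $C$, the identity $b^\top B\,C=b^\top b=I_k$, and the same reduction $b^\top b\,\Delta_0\,b^\top b=\Delta_0$ for item 3. The differences are cosmetic: you make existence and invertibility of $C$ explicit in item 1 and then invoke a one-sided inverse for the square matrix $C$, whereas the paper verifies both products $b^\top B\,C$ and $C\,b^\top B$ directly via $bb^\top B=B$ (an identity you use instead to confirm $b\phi=Bf$).
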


\begin{proof}
    For part 1, if $BC = b$, then $B^\top BC = B^\top b$, and left-multiplication by $(B^\top B)^{-1}$ gives the result.

    For part 2,
    \begin{equation}
        b^\top BC = b^\top b = I_k \text{ and } C(b^\top B) = (B^\top B)^{-1} B^\top b b^\top B.
    \end{equation}
Since $bb^\top = \Pib$, the orthogonal projector onto $\col(B)$, we have $bb^\top B = B$, and so $C(b^\top B) = I_k = (b^\top B)C$.

For part 3,
\begin{equation}
    \E[\phi \phi^\top] = b^\top B \Sigma_f B^\top b =  b^\top \Sigma_0 b = b^\top b \Delta_0 b^\top b = \Delta_0.
\end{equation}
\end{proof}
\medskip


{
\begin{proposition}[Principal coordinates and the eigenbasis $V^{(p)}$]\label{prop:chart}
      With the notation established in Section \ref{sec:fac-mod},
      let $G_B^{(p)}:=B^\top B/p$ and
$K^{(p)}:=\Sigma_f^{1/2}G_B^{(p)}\Sigma_f^{1/2}$, where the exponent 1/2 denotes the symmetric positive definite square root.
\begin{enumerate}
    \item For large enough $p$, $K^{(p)}$ has distinct positive eigenvalues $\mu_1^{(p)}>\cdots>\mu_k^{(p)}>0$. The spectral decomposition of
      $\Sigma_0 = B\Sigma_fB^\top$ is $b \Delta_0 b^\top$ with $\Delta_0:=p\operatorname{diag}(\mu_1^{(p)},\dots,\mu_k^{(p)})$.
    Further, there is a unique $k \times k$ orthogonal matrix $V^{(p)}$, with columns the unit eigenvectors of $K^{(p)}$, and column signs chosen so that
        \begin{equation}
            \Sigma_f^{1/2}B^\top b = V^{(p)}\Delta_0^{1/2}.
        \end{equation}

        Thus $K^{(p)} = V^{(p)}\operatorname{diag}(\mu_1^{(p)},\dots,\mu_k^{(p)})V^{(p)\top}$ is its eigendecomposition.
    \item Recalling $C$ is the unique $k \times k$ matrix such that $BC = b$, we have
    \begin{equation}
        C = \Sigma_f^{1/2} V^{(p)} \Delta_0^{-1/2}.
    \end{equation}

\end{enumerate}
\end{proposition}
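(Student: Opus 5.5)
Part 1 follows from two facts: $K^{(p)}\to K$, and the eigenvalues of $K$ are distinct. Since $G_B^{(p)}=B^\top B/p\to G_B$ by Assumption~\ref{asm:gram}, and $\Sigma_f^{1/2}$ is a fixed matrix, we get $K^{(p)}\to K$. Assumption~\ref{asm:sep} says $K$ has distinct positive eigenvalues $\mu_1>\cdots>\mu_k>0$. By Weyl's inequality (as in Lemma~\ref{lem:econv}(i)), for large $p$ the eigenvalues $\mu_i^{(p)}$ of $K^{(p)}$ are therefore distinct, positive and converge to $\mu_i$. Positivity also holds for every $p$ with $B$ of rank $k$, because $K^{(p)}$ is positive definite.

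Next we relate $\Sigma_0$ to $K^{(p)}$ by Gram duality. Put $A:=B\Sigma_f^{1/2}/\sqrt p$. Then $AA^\top=\Sigma_0/p$ and $A^\top A=K^{(p)}$. By Lemma~\ref{lem:gramdual}, $\Sigma_0/p$ has the same nonzero eigenvalues $\mu_i^{(p)}$, each simple. Hence $\Sigma_0$ has spectral decomposition $b\Delta_0b^\top$ with $\Delta_0=p\,\mathrm{diag}(\mu_i^{(p)})$, and the columns $b_i$ are determined up to sign.

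The lemma also gives an explicit dual eigenvector. If $b_i$ is a unit eigenvector of $AA^\top$ for $\mu_i^{(p)}$, then
\[
v_i:=A^\top b_i/\sqrt{\mu_i^{(p)}}=\Sigma_f^{1/2}B^\top b_i/\sqrt{p\mu_i^{(p)}}
\]
is a unit eigenvector of $K^{(p)}$ for $\mu_i^{(p)}$. Let $V^{(p)}$ have columns $v_i$. Written in matrix form, this definition is exactly $\Sigma_f^{1/2}B^\top b=V^{(p)}\Delta_0^{1/2}$. The matrix $V^{(p)}$ is orthogonal because eigenvectors of a symmetric matrix with distinct eigenvalues are orthonormal.

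Uniqueness needs a short argument. Each unit eigenvector of $K^{(p)}$ is determined up to sign, and the column signs are forced by the displayed identity. Its $i$th column reads $\Sigma_f^{1/2}B^\top b_i=\sqrt{p\mu_i^{(p)}}\,v_i$, and the left side is nonzero, being a vector of norm $\sqrt{p\mu_i^{(p)}}$. Flipping the sign of $v_i$ would break the identity, so $V^{(p)}$ is unique once $b$ is fixed. The eigendecomposition $K^{(p)}=V^{(p)}\mathrm{diag}(\mu^{(p)})V^{(p)\top}$ is then immediate.

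For Part 2, start from Lemma~\ref{lem:basis}, which gives $C^{-1}=b^\top B$. Transposing the identity from Part 1 gives
\[
b^\top B\,\Sigma_f^{1/2}=\Delta_0^{1/2}V^{(p)\top}.
\]
Multiplying on the right by $\Sigma_f^{-1/2}$ yields $C^{-1}=\Delta_0^{1/2}V^{(p)\top}\Sigma_f^{-1/2}$. Inverting and using the orthogonality of $V^{(p)}$ gives
\[
C=\Sigma_f^{1/2}V^{(p)}\Delta_0^{-1/2}.
\]

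The main point needing care is the sign and uniqueness bookkeeping. We must check that $b$ is determined only up to column signs and that $V^{(p)}$ inherits these signs consistently. We must also check that "large enough $p$" is needed only for distinctness of the $\mu_i^{(p)}$, which the convergence argument above supplies.
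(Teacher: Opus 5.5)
Your proposal is correct and follows the paper's proof: the same Gram-duality argument with $A=B\Sigma_f^{1/2}$ (your extra $1/\sqrt p$ scaling is immaterial) gives $v_i=A^\top b_i/\sqrt{\cdot}$ and hence $\Sigma_f^{1/2}B^\top b=V^{(p)}\Delta_0^{1/2}$. For Part 2 you invert $C^{-1}=b^\top B$ from Lemma~\ref{lem:basis}, whereas the paper checks $B\,\Sigma_f^{1/2}V^{(p)}\Delta_0^{-1/2}=\Sigma_0 b\Delta_0^{-1}=b$ and invokes uniqueness of $C$; this is only a trivial algebraic variant.
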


\begin{proof}
Since $G_B^{(p)}\to G_B$ (Assumption \ref{asm:gram}), $K^{(p)}\to K:=\Sigma_f^{1/2}G_B\Sigma_f^{1/2}$, whose eigenvalues are distinct by Assumption \ref{asm:sep}; hence, for large enough $p$, $K^{(p)}$ also has distinct eigenvalues $\mu_1^{(p)}>\cdots>\mu_k^{(p)}>0$, with orthonormal eigenvectors unique up to sign.

Set $A:=B\Sigma_f^{1/2}\in\R^{p\times k}$, so $AA^\top=\Sigma_0$ and $A^\top A=\Sigma_f^{1/2}B^\top B\Sigma_f^{1/2}=pK^{(p)}$, which has simple nonzero eigenvalues $p\mu_1^{(p)}>\cdots>p\mu_k^{(p)}>0$. By Lemma \ref{lem:gramdual}, $\Sigma_0=AA^\top$ has the same simple nonzero eigenvalues, giving the spectral decomposition $\Sigma_0=b\Delta_0 b^\top$ with $\Delta_0=p\operatorname{diag}(\mu_1^{(p)},\dots,\mu_k^{(p)})$, unique up to signs of $b$'s columns.

Still by Lemma \ref{lem:gramdual}, if $b_j$ is the $j$th unit eigenvector of $\Sigma_0=AA^\top$, then $v_j:=A^\top b_j/\sqrt{p\mu_j^{(p)}}$ is the $j$th unit eigenvector of $A^\top A=pK^{(p)}$, equivalently of $K^{(p)}$, since scaling by $p$ does not change eigenvectors. Collecting $V^{(p)}:=[v_1,\dots,v_k]$, orthonormality of the $v_j$ (Lemma \ref{lem:gramdual}) makes $V^{(p)}$ orthogonal, and
\begin{equation} \label{eq:V}
    \Sigma_f^{1/2}B^\top b = A^\top b = V^{(p)}\operatorname{diag}(\sqrt{p\mu_1^{(p)}},\dots,\sqrt{p\mu_k^{(p)}}) = V^{(p)}\Delta_0^{1/2},
\end{equation}
establishing the first part, with the signs of $V^{(p)}$'s columns tied to those of $b = b^{(p)}$.

For part 2, equation \eqref{eq:V} gives $V^{(p)}\Delta_0^{-1/2}=A^\top b\Delta_0^{-1}$, so
\begin{equation}
    B \,(\Sigma_f^{1/2} V^{(p)} \Delta_0^{-1/2}) = A V^{(p)}\Delta_0^{-1/2} = AA^\top b \Delta_0^{-1} = \Sigma_0 b \Delta_0^{-1} = b\Delta_0 (b^\top b)\Delta_0^{-1} = b,
\end{equation}
using $b^\top b = I_k$, and the claim follows from uniqueness of $C$ (Lemma \ref{lem:basis}).

\end{proof}
}
\medskip

\subsection{The systematic dual Gram matrices in the limit}


The following proposition is the bridge from the ambient factor path $F\in\R^{k\times n}$
to the small dual Gram matrices $W_0 \in \mathbb{R}^{n \times n}$ and $N \in \mathbb{R}^{k \times k}$ of Section~\ref{sec:gram}. The randomness of both matrices depends solely on $F$.  Moreover, for each $p$ the realized
systematic dual Gram $N^{(p)}$ depends on $F$ solely through $FF^\top/n$, the sample second moment of the $n$ factor draws $f^{(1)}, \dots, f^{(n)}$.
\medskip

\begin{proposition}[Limits of the systematic dual Grams]\label{prop:gramlim}
Recall $G_B^{(p)}=B^\top B/p$ and its limit $G_B$ (Assumption~\ref{asm:gram}),
and set $K^{(p)}=\Sigma_f^{1/2}G_B^{(p)}\Sigma_f^{1/2}$,
$K=\Sigma_f^{1/2}G_B\Sigma_f^{1/2}=\lim_{p\to\infty}K^{(p)}$. Recall $\mu_1,\dots,\mu_k$ are the ordered eigenvalues of $K$. Fix an
eigenbasis $V$ of $K$, columns in decreasing eigenvalue order (unique up to
column signs, by Assumption~\ref{asm:sep}), and let $V^{(p)}$ be the
corresponding eigenbasis of $K^{(p)}$ from Proposition~\ref{prop:chart}. Choose the
column signs of $b$ --- equivalently of $V^{(p)}$ --- so that
$\langle V^{(p)}_{\cdot j},V_{\cdot j}\rangle\ge0$ for each $j$ and all
large $p$. (No conclusion of
 Theorem~\ref{thm:main} depends on this choice.)

Under our standing assumptions:
\begin{enumerate}
\item[(i)] $\Phi^\top\Phi/(np)=F^\top G_B^{(p)}F/n\longrightarrow
F^\top G_BF/n=W_0$.
\item[(ii)] Defining $\bar\Phi:=\Phi/\sqrt p$, we have
$\bar\Phi\to\bar\Phi^\infty$, where
\begin{equation}\label{eq:Phiinf}
  \bar\Phi^\infty:=\operatorname{diag}(\sqrt{\mu_1},\dots,\sqrt{\mu_k})\,
  V^\top\Sigma_f^{-1/2}F.
\end{equation}
Consequently $N^{(p)}=\bar\Phi\bar\Phi^\top/n\to
N:=\bar\Phi^\infty(\bar\Phi^\infty)^\top/n$, explicitly
\begin{equation}\label{eq:Nnclosed}
  N=\operatorname{diag}(\sqrt{\mu_1},\dots,\sqrt{\mu_k})\,
  V^\top\Sigma_f^{-1/2}\,\frac{FF^\top}{n}\,\Sigma_f^{-1/2}V\,
  \operatorname{diag}(\sqrt{\mu_1},\dots,\sqrt{\mu_k}).
\end{equation}
\end{enumerate}
\end{proposition}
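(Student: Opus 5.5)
Part (i) is a direct computation, and part (ii) uses the chart of Proposition~\ref{prop:chart}. For (i), I would write $\Phi = C^{-1}F = b^\top B F$ using Lemma~\ref{lem:basis}. Then
\[
\Phi^\top\Phi = F^\top B^\top b b^\top B F = F^\top B^\top \Pib B F = F^\top B^\top B F,
\]
since $\Pib B = B$. Dividing by $np$ gives $F^\top G_B^{(p)}F/n$. Because $F$ is fixed and does not depend on $p$, Assumption~\ref{asm:gram} gives convergence to $F^\top G_B F/n = W_0$ entrywise, hence in any norm on the fixed space $\R^{n\times n}$.

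For (ii), I would first get a closed form for $\bar\Phi$ at finite $p$. Proposition~\ref{prop:chart} gives $C=\Sigma_f^{1/2}V^{(p)}\Delta_0^{-1/2}$, so
\[
C^{-1}=\Delta_0^{1/2}V^{(p)\top}\Sigma_f^{-1/2}, \qquad
\bar\Phi=\Phi/\sqrt p=\operatorname{diag}\big(\sqrt{\mu_1^{(p)}},\dots,\sqrt{\mu_k^{(p)}}\big)V^{(p)\top}\Sigma_f^{-1/2}F,
\]
where the factor $p^{1/2}$ in $\Delta_0^{1/2}$ cancels the $1/\sqrt p$. It then suffices to show $\mu_j^{(p)}\to\mu_j$ and $V^{(p)}\to V$.

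Eigenvalue convergence follows from $K^{(p)}\to K$ and Weyl's inequality. For the eigenvectors, each $\mu_j$ is simple by Assumption~\ref{asm:sep}, so Lemma~\ref{lem:econv}(i) gives $|\langle V^{(p)}_{\cdot j},V_{\cdot j}\rangle|\to1$. The sign convention $\langle V^{(p)}_{\cdot j},V_{\cdot j}\rangle\ge0$, which is legitimate because flipping the sign of a column of $b$ flips the corresponding column of $V^{(p)}$ through $\Sigma_f^{1/2}B^\top b=V^{(p)}\Delta_0^{1/2}$, then lets Lemma~\ref{lem:econv}(ii) upgrade this to $V^{(p)}_{\cdot j}\to V_{\cdot j}$. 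Products of convergent fixed-size matrices converge, so $\bar\Phi\to\bar\Phi^\infty$ as in \eqref{eq:Phiinf}.

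Finally, $N^{(p)}=\Phi\Phi^\top/(np)=\bar\Phi\bar\Phi^\top/n$ converges to $\bar\Phi^\infty(\bar\Phi^\infty)^\top/n$ by continuity of multiplication. Expanding this product gives \eqref{eq:Nnclosed}.

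The only delicate step is the sign bookkeeping. Convergence of $V^{(p)}$ holds only after the sign pinning, and I must check that the sign choice for $b$ is compatible with the one already fixed in Proposition~\ref{prop:chart}. It is compatible, since that proposition ties the signs of $V^{(p)}$ to those of $b$, and so fixing the signs of $V^{(p)}$ amounts to choosing the signs of $b$. The inner product is eventually nonzero, so the choice is well defined for large $p$. The remaining steps are routine.
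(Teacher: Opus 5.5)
Your proposal is correct and follows essentially the same route as the paper: part (i) via $\Phi=b^{\top}BF$ and $bb^{\top}B=B$, and part (ii) via the finite-$p$ closed form $\bar\Phi=\operatorname{diag}(\sqrt{\mu_j^{(p)}})V^{(p)\top}\Sigma_f^{-1/2}F$ from Proposition~\ref{prop:chart}, with $V^{(p)}\to V$ obtained from Lemma~\ref{lem:econv} under the sign convention. You also check explicitly that the sign choice for $b$ is compatible with Proposition~\ref{prop:chart}, because that proposition ties the column signs of $V^{(p)}$ to those of $b$; the paper makes this point only in passing, but the argument is the same.
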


{
\begin{remark}
Proposition~\ref{prop:gramlim} sheds more light on the in-subspace rotation error angle $\angle(\nu_j, e_j)$ of Theorem~\ref{thm:main}.
    If we write $\Lambda, Q, \hat \Sigma_f$ for the
    $k \times k$ matrices
$$ \Lambda := \operatorname{diag}({\mu_1},\dots,{\mu_k}), \quad
Q := \Lambda^{1/2} V^\top \Sigma_f^{-1/2}, \;\text{ and }\; \hat \Sigma_f := FF^\top/n,$$
then
\[
 Q \hat \Sigma_f Q^\top = N \;\text{ and }\; Q \Sigma_f Q^\top = \Lambda,
\]
with the first equation coming from part (ii) of the Lemma.
The angle $\angle(\nu_j, e_j)$ is the angle between the $j$th eigenvectors of $N$ and $\Lambda$, which are congruent (via $Q$) to $\hat \Sigma_f$ and $\Sigma_f$, respectively. Therefore the
angle is a distorted version of the angle between corresponding eigenvectors of the sample and population covariance matrices of the $k$-dimensional factor $f$. It is easy to check that the distorting matrix $Q$ satisfies $Q^\top Q = G_B$, and so the distortion is due to the scaled limiting inner products among the columns of $B$.

\end{remark}
}

\begin{proof}
\emph{(i)} By Lemma~\ref{lem:basis}, $\Phi=b^\top BF$, so
\begin{equation}
  \Phi^\top\Phi=F^\top B^\top bb^\top BF=F^\top B^\top BF,
\end{equation}
since $bb^\top$ is the orthogonal projector onto $\col(b)=\col(B)$
(Equation~\eqref{eq:borth}) and therefore fixes every column of $B$:
$bb^\top B=B$. Hence $\Phi^\top\Phi/(np)=F^\top G_B^{(p)}F/n$, and
Assumption~\ref{asm:gram} gives the limit.

\emph{(ii)} By Proposition~\ref{prop:chart},
$K^{(p)}=V^{(p)}\operatorname{diag}(\mu_1^{(p)},\dots,\mu_k^{(p)})V^{(p)\top}$
is its eigendecomposition, and $K^{(p)}\to K=\Sigma_f^{1/2}G_B\Sigma_f^{1/2}$,
whose eigenvalues $\mu_1>\cdots>\mu_k>0$ are simple
(Assumption~\ref{asm:sep}). Lemma~\ref{lem:econv}(i), applied columnwise,
gives $\angle(V_{\cdot j}^{(p)},V_{\cdot j})\to0$ and $\mu_j^{(p)}\to
\mu_j$; under our sign convention $\langle V_{\cdot j}^{(p)},V_{\cdot
j}\rangle\ge0$, the pinning sign of Lemma~\ref{lem:econv}(ii) is $+1$, so
$V^{(p)}\to V$.

From Proposition~\ref{prop:chart}, $\Phi=\Delta_0^{1/2}V^{(p)\top}\Sigma_f^{-1/2}F$
and $\Delta_0^{1/2}/\sqrt p=\operatorname{diag}(\sqrt{\mu_j^{(p)}})$, so
$$\bar\Phi=\operatorname{diag}(\sqrt{\mu_j^{(p)}})V^{(p)\top}\Sigma_f^{-1/2}F.$$
Since $\mu_j^{(p)}\to\mu_j$ and $V^{(p)}\to V$ entrywise,
$\bar\Phi\to\bar\Phi^\infty$, and hence
$N^{(p)}=\bar\Phi\bar\Phi^\top/n\to\bar\Phi^\infty(\bar\Phi^\infty)^\top/n
=N$. Substituting \eqref{eq:Phiinf} into this expression gives
\eqref{eq:Nnclosed}.

\end{proof}

 \emph{Example ($k=1$).} With a single factor, $\Sigma_f$, $G_B$, and
$K=\Sigma_fG_B$ are scalars, and \eqref{eq:Nnclosed} collapses to
$$N=G_B\cdot\frac{FF^\top}{n},$$
the sample variance of the one-dimensional factor path, scaled by the
loading strength $G_B$. There is no eigenvector to speak of ($V=1$, up to
sign): with $k=1$ there is nothing for $\nu_1$ to rotate relative to,
and Theorem~\ref{thm:main}'s in-subspace rotation term vanishes identically.
This recovers the case of \cite{goldberg2022} and the boundary result of \cite{jung2012}.
Rotation only becomes possible, and \eqref{eq:Nnclosed}'s off-diagonal structure
becomes relevant,  once $k\ge2$; the $k=3$ calibration of
Section~\ref{sec:simulation} is a fully worked case.

The proof of Theorem~\ref{thm:main}
 (Step 3) uses the eigenvector correspondence below to pass between
the $n \times n$ and $k \times k$ sides of the realized systematic duals.
\medskip

\begin{corollary}[Gram duality of the realized systematic matrices]\label{cor:pcdual}
$N$ and $W_0$ share the same nonzero spectrum
$\{\lambda_j\}$, and $N$'s $j$th unit eigenvector $\nu_j$
is related to $W_0$'s $j$th unit eigenvector $w_j$ by
\begin{equation}\label{eq:Ndualvec}
  \bar\Phi^\infty w_j=\sqrt{n\lambda_j}\,\nu_j.
\end{equation}
\end{corollary}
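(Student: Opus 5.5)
The plan is to apply Gram duality (Lemma~\ref{lem:gramdual}) with $A := \bar\Phi^\infty/\sqrt n \in \R^{k\times n}$. By Proposition~\ref{prop:gramlim}(ii), $AA^{\!\top} = \bar\Phi^\infty(\bar\Phi^\infty)^{\!\top}/n = N$. For the dual side I would verify directly that $A^{\!\top}A = (\bar\Phi^\infty)^{\!\top}\bar\Phi^\infty/n = W_0$. Using \eqref{eq:Phiinf} and writing $\Lambda=\operatorname{diag}(\mu_1,\dots,\mu_k)$,
\[
(\bar\Phi^\infty)^{\!\top}\bar\Phi^\infty = F^{\!\top}\Sigma_f^{-1/2}V\Lambda V^{\!\top}\Sigma_f^{-1/2}F = F^{\!\top}\Sigma_f^{-1/2}K\Sigma_f^{-1/2}F = F^{\!\top}G_BF,
\]
since $V\Lambda V^{\!\top}=K=\Sigma_f^{1/2}G_B\Sigma_f^{1/2}$. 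Dividing by $n$ gives $W_0$ as in \eqref{eq:Wlim}. An alternative route is to take limits in $\bar\Phi^{\!\top}\bar\Phi/n = W_0^{(p)}$ from Proposition~\ref{prop:gramlim}(i) together with $\bar\Phi\to\bar\Phi^\infty$. Either way, $N$ and $W_0$ are a Gram-dual pair.

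Next I would address the spectral hypotheses. By Assumption~\ref{asm:reg}, $W_0$ has $k$ distinct positive eigenvalues $\lambda_1>\cdots>\lambda_k$ and the eigenvalue $0$ with multiplicity $n-k$. Since $AA^{\!\top}$ and $A^{\!\top}A$ share nonzero spectra with multiplicity, the $k\times k$ matrix $N$ has exactly the eigenvalues $\lambda_1,\dots,\lambda_k$, all simple and nonzero. This gives the first claim, and it also means Lemma~\ref{lem:gramdual} applies, because its hypothesis concerns the simplicity of the nonzero eigenvalues of $AA^{\!\top}=N$.

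Lemma~\ref{lem:gramdual}, in its converse direction (from the eigenpair of $A^{\!\top}A$ to that of $AA^{\!\top}$), then shows that $(\lambda_j, Aw_j/\sqrt{\lambda_j})$ is the $j$th eigenpair of $N$. The ordering is preserved because both matrices have the same nonzero spectrum. Hence $\nu_j = \pm\,\bar\Phi^\infty w_j/\sqrt{n\lambda_j}$, that is, $\bar\Phi^\infty w_j = \sqrt{n\lambda_j}\,\nu_j$ once the sign of $\nu_j$ is fixed to match $w_j$.

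I expect the only delicate point to be this sign convention. Since $\nu_j$ and $w_j$ are each determined only up to sign, \eqref{eq:Ndualvec} should be read as fixing the sign of $\nu_j$ given $w_j$. This is harmless, because every quantity in Theorem~\ref{thm:main} is sign-free. The remaining care is in invoking the lemma with the roles of $A$ and $A^{\!\top}$ arranged so that the simplicity hypothesis is met; arranged as above, it is.
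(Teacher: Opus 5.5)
Your proposal is correct and follows the paper's proof essentially verbatim: the paper likewise verifies $(\bar\Phi^\infty)^{\!\top}\bar\Phi^\infty/n=W_0$ using $V\operatorname{diag}(\mu_j)V^{\!\top}=K=\Sigma_f^{1/2}G_B\Sigma_f^{1/2}$, and then applies Lemma~\ref{lem:gramdual} to $\bar\Phi^\infty/\sqrt n$. Your additional checks are details the paper leaves implicit: that $N$ has simple nonzero eigenvalues by Assumption~\ref{asm:reg}, and that \eqref{eq:Ndualvec} is to be read with the sign of $\nu_j$ matched to that of $w_j$.
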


\begin{proof}
Since $V\operatorname{diag}(\mu_j)V^\top=K=\Sigma_f^{1/2}G_B\Sigma_f^{1/2}$
(the eigendecomposition fixed in Proposition~\ref{prop:gramlim}),
\begin{equation*}
  \frac{(\bar\Phi^\infty)^\top\bar\Phi^\infty}{n}
  =\frac{F^\top\Sigma_f^{-1/2}\,V\operatorname{diag}(\mu_j)V^\top\,
         \Sigma_f^{-1/2}F}{n}
  =\frac{F^\top G_BF}{n}=W_0.
\end{equation*}
So $N=\bar\Phi^\infty(\bar\Phi^\infty)^\top/n$ and
$W_0=(\bar\Phi^\infty)^\top\bar\Phi^\infty/n$ are dual Grams in the
sense of Lemma~\ref{lem:gramdual}, applied to $\bar\Phi^\infty/\sqrt n$;
this gives \eqref{eq:Ndualvec} directly.

\end{proof}

\begin{corollary}[Large-$n$ limit of $N$] \label{cor:n-limit}
    Drop the fixed-$F$
conditioning of Section~\ref{sec:data}, and suppose   the factor sample
$f^{(1)},f^{(2)},\dots$  satisfies
$$FF^{\!\top}/n\to\Sigma_f \text{ a.s.\ as } n\to\infty.$$
Then, with $N$
 as in \eqref{eq:Nnclosed} and $\nu_j$ its $j$th unit eigenvector,

 $$N\to\operatorname{diag}(\mu_1,\dots,\mu_k) \text{ and }
 \angle(\nu_j,e_j)\to0$$
 almost surely.

\end{corollary}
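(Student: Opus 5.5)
The plan is to work directly from the closed form \eqref{eq:Nnclosed}. In the notation of the remark after Proposition~\ref{prop:gramlim}, set $\Lambda=\operatorname{diag}(\mu_1,\dots,\mu_k)$, $Q=\Lambda^{1/2}V^\top\Sigma_f^{-1/2}$ and $\hat\Sigma_f=FF^\top/n$. Then $N=Q\hat\Sigma_fQ^\top$ and $Q\Sigma_fQ^\top=\Lambda$.

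One point needs attention first. When $n$ varies, the matrix $N=N_n$ depends on $n$ only through $\hat\Sigma_f$. The matrix $Q$ is built from the population quantities $\Sigma_f$, $G_B$ and $V$, so it does not depend on $n$ or on the sample. Hence the map $\hat\Sigma_f\mapsto Q\hat\Sigma_fQ^\top$ is a fixed linear, and therefore continuous, map on $k\times k$ matrices.

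\textbf{Step 1: convergence of $N$.} On the almost sure event where $FF^\top/n\to\Sigma_f$, continuity of this fixed linear map gives
\[
N\to Q\Sigma_fQ^\top=\Lambda^{1/2}V^\top\Sigma_f^{-1/2}\,\Sigma_f\,\Sigma_f^{-1/2}V\Lambda^{1/2}=\Lambda^{1/2}V^\top V\Lambda^{1/2}=\Lambda .
\]
The last equality uses that $V$ is orthogonal.

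\textbf{Step 2: convergence of the eigenvectors.} The diagonal matrix $\Lambda$ has distinct diagonal entries by Assumption~\ref{asm:sep}. So its $j$th eigenvalue $\mu_j$ is simple, with unit eigenvector $e_j$. Apply Lemma~\ref{lem:econv}(i) to the symmetric matrices $N=N_n\to\Lambda$, indexed by $n$ instead of $p$; the lemma's proof uses only convergence of the matrices, so re-indexing is harmless. It says that for large $n$ the $j$th eigenvalue $\lambda_j$ of $N_n$ is simple and converges to $\mu_j$. It also says that its unit eigenvector $\nu_j$, with either sign, satisfies $|\langle\nu_j,e_j\rangle|\to1$, that is, $\angle(\nu_j,e_j)\to0$.

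One small matter remains. By Weyl's inequality, the $j$th eigenvalue of $N_n$ in decreasing order is the one that converges to $\mu_j$. So the ordering convention defining $\nu_j$ matches the eigenvalue that Lemma~\ref{lem:econv} tracks. For small $n$ (for instance $n\le k$) the matrix $N$ may have repeated eigenvalues, but the statement is asymptotic, so this does not matter.

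The only step that needs care is the first: checking that $Q$ is genuinely independent of $n$, so that no additional limit interchange is required. The rest follows immediately from earlier results.
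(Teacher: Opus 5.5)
Your proposal is correct and follows essentially the same route as the paper's proof. You substitute $FF^\top/n\to\Sigma_f$ into the closed form \eqref{eq:Nnclosed}, use $V^\top V=I$ to obtain $N\to\operatorname{diag}(\mu_1,\dots,\mu_k)$, and then apply Lemma~\ref{lem:econv}(i), using the simplicity of the $\mu_j$ from Assumption~\ref{asm:sep}; your remarks that $Q=\Lambda^{1/2}V^\top\Sigma_f^{-1/2}$ does not depend on $n$ and that Weyl's inequality matches the eigenvalue ordering make explicit what the paper leaves implicit.
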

\begin{proof}
Under the hypothesis $FF^{\!\top}/n\to\Sigma_f$ a.s., the closed
form \eqref{eq:Nnclosed} gives
\begin{equation*}
  N
   \to\operatorname{diag}(\sqrt{\mu_j})\,V^{\!\top}V\,\operatorname{diag}(\sqrt{\mu_j})
   =\operatorname{diag}(\mu_j) .
\end{equation*}
The limit's eigenvalues $\mu_j$ are simple (Assumption~\ref{asm:sep})
with eigenvectors the axes $e_j$, so Lemma~\ref{lem:econv}(i) gives
$\angle(\nu_j,e_j)\to0$.

\end{proof}

\begin{remark}
Corollary \ref{cor:n-limit} above is not needed for our results, but illuminates the context in which $n$ may vary.
We mention two natural sufficient conditions for the hypothesis
$FF^\top/n \to \Sigma_f$
.
  Since
$FF^{\!\top}/n=\frac1n\sum_{l=1}^nf^{(l)}f^{(l)\top}$ and $k$ is fixed,
entrywise a.s.\ convergence suffices, so fix an entry $(a,b)$ and write
$X_l := f^{(l)}_af^{(l)}_b$.

\emph{1.}  Suppose the $f^{(l)}$ are independent with
common second-moments $\E[f^{(l)}f^{(l)\top}]=\Sigma_f$ and uniformly
bounded fourth moments $\kappa_f:=\sup_l\E\|f^{(l)}\|^4<\infty$.  Then the
$X_l$ are independent with
common mean $\E X_l=(\Sigma_f)_{ab}$ and, by Cauchy--Schwarz,
\begin{equation*}
  \Var(X_l)\le\E\big[(f^{(l)}_a)^2(f^{(l)}_b)^2\big]
  \le\big(\E[(f^{(l)}_a)^4]\,\E[(f^{(l)}_b)^4]\big)^{1/2}
  \le\kappa_f<\infty ,
\end{equation*}
so $\sum_l\Var(X_l)/l^2<\infty$ and Kolmogorov's strong law for independent
(not necessarily identically distributed) summands (\cite{Petrov1975}) gives
$\frac1n\sum_{l\le n}X_l\to(\Sigma_f)_{ab}$ a.s.

\emph{2.}  Suppose $f^{(1)},f^{(2)},\dots$ is
stationary and ergodic with $\E\|f^{(1)}\|^2<\infty$ and
$\E[f^{(1)}f^{(1)\top}]=\Sigma_f$.
This covers
serially dependent factor paths, and requires only second moments.
Then it can be shown that
$(X_l)_{l\ge1}$
is stationary and ergodic with
$\E|X_l|\le\E\|f^{(1)}\|^2<\infty$, and Birkhoff's ergodic theorem gives
$\frac1n\sum_{l\le n}X_l\to\E X_1=(\Sigma_f)_{ab}$ a.s.
\end{remark}

\bigskip

\subsection{The observable dual Gram matrix in the limit}

The following lemma is used only in part (b) of the proposition that follows.
\medskip

\begin{lemma}[Specific return concentration]\label{lem:zconc}
  Write $Z_{\cdot l}$ for the $l$th column of $Z$, and
let $a=a^{(p)}\in\R^p$ be deterministic unit vectors.  Under
Assumption~\ref{asm:noise}, for each fixed $l$,
$$\frac{a^{\!\top}Z_{\cdot l}}{\sqrt p}\to0$$ almost surely under the conditional law
given $F$, for every $F$ at which the conditional moment conditions of Assumption \ref{asm:noise} hold.
\end{lemma}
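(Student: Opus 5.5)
The plan is a fourth-moment Borel--Cantelli argument. Fix $l$ and $F$, and work under the conditional law given $F$. Write $S_p := a^{\top}Z_{\cdot l} = \sum_{i=1}^p a_i Z_{il}$. Under Assumption~\ref{asm:noise} the summands are conditionally independent and mean zero. The variance of $S_p$ is at most $\sup_i \delta_{i,l}^2\,\|a\|^2 \le \sqrt{\kappa_4}$, so $S_p/\sqrt p \to 0$ in probability at once. For almost sure convergence I would bound the conditional fourth moment and sum over $p$.

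The key computation is to expand
\[
\E\big[S_p^4\mid F\big]=\sum_{i}a_i^4\,\E[Z_{il}^4\mid F]+3\sum_{i\ne i'}a_i^2a_{i'}^2\,\delta_{i,l}^2\delta_{i',l}^2 .
\]
All terms containing an odd power of some $Z_{il}$ vanish by independence and the mean-zero condition. Using $\E[Z_{il}^4\mid F]\le\kappa_4$, $\delta_{i,l}^2\le\sqrt{\kappa_4}$, and $\sum_i a_i^4\le(\sum_i a_i^2)^2=1$, this gives $\E[S_p^4\mid F]\le 3\kappa_4$, a bound uniform in $p$.

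Then Markov's inequality gives, for each $\epsilon>0$,
\[
\mathbb P\big(|S_p|/\sqrt p>\epsilon\mid F\big)\le \frac{3\kappa_4}{\epsilon^4p^2}.
\]
This is summable in $p$, so Borel--Cantelli (conditionally on $F$) shows that only finitely many exceedances occur almost surely. Taking $\epsilon=1/m$ and intersecting over $m\in\mathbb N$ yields $S_p/\sqrt p\to0$ almost surely.

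The main point requiring care is that $a=a^{(p)}$ changes with $p$, so the sequence $S_p$ is not a sequence of partial sums. Martingale or Kolmogorov strong-law arguments therefore do not apply directly. This is exactly why I would use the uniform fourth-moment bound with the $p^{-2}$ tail and Borel--Cantelli, which needs no dependence structure across different $p$. The only other checks are that the bound holds for every $F$ at which the conditional moment conditions hold, and that the nesting convention keeps the entries $Z_{il}$ the same random variables across $p$.
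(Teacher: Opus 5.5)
Your proposal is correct and follows essentially the same route as the paper. Both arguments expand the conditional fourth moment of $a^{\top}Z_{\cdot l}$ and bound it uniformly in $p$: your $3\kappa_4$ is valid since $\sum_{i\ne i'}a_i^2a_{i'}^2=1-\sum_i a_i^4$, and the paper's bound is $\kappa_4+3\bar\delta^4$. Both then apply Markov's inequality to get a summable $p^{-2}$ tail and finish with the first Borel--Cantelli lemma under the conditional law; your explicit intersection over $\epsilon=1/m$ and your remark that $a^{(p)}$ varying with $p$ rules out partial-sum strong laws are clarifications the paper leaves implicit.
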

\begin{proof}

Work conditionally on $F$: by Assumption~\ref{asm:noise} the entries
$Z_{il}$, $i\ge1$, of the $l$th column are independent and mean zero, with variances
$\delta_{i,l}^2$ and fourth moments $\E[Z_{il}^4\mid F]\le\kappa_4$; by the
$F$-freeness of the variances and the uniform fourth-moment bound, all
bounds below are deterministic.

Let $\bar\delta^2 := \sup_{i,l} \delta^2_{i,l} < \infty $ and
$\gamma_p:=a^{\!\top}Z_{\cdot l}=\sum_i a_iZ_{il}$,
a sum of (conditionally) independent mean-zero terms. Using $\|a\|=1$ and $\sum_ia_i^4\le(\sum_ia_i^2)^2=1$,
we have
$\E[\gamma_p^2\mid F]=\sum_i a_i^2\delta_{i,l}^2\le\bar\delta^2$ and
$$\E[\gamma_p^4\mid F]=\sum_i a_i^4\E[Z_{il}^4\mid F]+3\sum_{i\ne i'}a_i^2a_{i'}^2\delta_{i,l}^2\delta_{i',l}^2
\le\kappa_4+3\bar\delta^4,$$
since the
cross terms with an odd power vanish by conditional independence and zero mean.  By
Markov, $P(|\gamma_p|>\epsilon\sqrt p\mid F)=P(\gamma_p^4>\epsilon^4p^2\mid F)\le (\kappa_4+3\bar\delta^4)/(\epsilon^4p^2)$,
which is summable. Then Borel--Cantelli, applied under the conditional law, gives
$\limsup_p|\gamma_p|/\sqrt p\le\epsilon$
a.s.\ for every $\epsilon>0$.

\end{proof}
\medskip
\begin{proposition}[Limit of the observable dual $W^{(p)}$]\label{prop:dual}
Under our Assumptions,
conditional on $F$ and a.s.\ as $p\to\infty$:
\begin{enumerate}
\item[(a)] $Z^{\!\top}Z/(np)\to(\delta^2/n)I_n$ in spectral norm;
\item[(b)] $\|\Pib Z\|_F=\|b^{\!\top}Z\|_F=o(\sqrt p)$;
\item[(c)] $W^{(p)}\to W$ of \eqref{eq:Wlim} in spectral norm, with top
$k$ eigenpairs $(\lambda_j+\delta^2/n,\,w_j)$; and
\item[(d)] $\theta_j^{(p)}\to\lambda_j+\delta^2/n>0$ and
$\angle\big(w_j^{(p)},w_j\big)\to0$, $j=1,\dots,k$.
\end{enumerate}
\end{proposition}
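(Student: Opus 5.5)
Parts (a) and (b) are strong laws for the noise array, taken conditionally on $F$; (c) and (d) then follow by algebra and perturbation.

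\emph{Part (a).} Since $n$ is fixed, spectral-norm convergence of $Z^{\!\top}Z/(np)$ is the same as entrywise convergence.
\begin{itemize}
\item \emph{Diagonal entries.} The $(l,l)$ entry is $\frac1p\sum_{i\le p}Z_{il}^2$. Write it as $\frac1p\sum_i\delta_{i,l}^2+\frac1p\sum_i(Z_{il}^2-\delta_{i,l}^2)$. The first term tends to $\delta^2$ by Assumption~\ref{asm:delta}. In the second, the centered summands are conditionally independent with variance at most $\kappa_4$, so $\sum_i\Var/i^2<\infty$ and Kolmogorov's strong law (as in the remark after Corollary~\ref{cor:n-limit}) gives a.s.\ convergence to $0$.
\item \emph{Off-diagonal entries.} For $l\ne m$ the summands $Z_{il}Z_{im}$ are conditionally independent with mean zero and variance $\delta_{i,l}^2\delta_{i,m}^2\le\kappa_4$. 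The same strong law gives $\frac1p\sum_iZ_{il}Z_{im}\to0$.
\end{itemize}
Dividing by $n$ yields $(\delta^2/n)I_n$.

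\emph{Part (b).} We have $\|\Pib Z\|_F=\|b^{\!\top}Z\|_F$ because $b$ has orthonormal columns. So $\|b^{\!\top}Z\|_F^2=\sum_{j\le k}\sum_{l\le n}(b_j^{\!\top}Z_{\cdot l})^2$, a finite sum of $kn$ terms. The vectors $b_j=b_j^{(p)}$ are deterministic unit vectors: they depend only on $B$ and $\Sigma_f$, not on $Z$. Lemma~\ref{lem:zconc} therefore gives $b_j^{\!\top}Z_{\cdot l}/\sqrt p\to0$ a.s.\ for each pair $(j,l)$. Intersecting these finitely many null-complement events gives $\|b^{\!\top}Z\|_F/\sqrt p\to0$.

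\emph{Part (c).} Substitute $Y=b\Phi+Z$:
\[
W^{(p)}=\frac{\Phi^{\!\top}\Phi}{np}+\frac{\Phi^{\!\top}b^{\!\top}Z+Z^{\!\top}b\Phi}{np}+\frac{Z^{\!\top}Z}{np}.
\]
\begin{itemize}
\item The first term tends to $W_0$ by Proposition~\ref{prop:gramlim}(i).
\item The last term tends to $(\delta^2/n)I_n$ by (a).
\item The cross term has spectral norm at most $2\|\bar\Phi\|\,\|b^{\!\top}Z\|_F/(n\sqrt p)$. Here $\bar\Phi$ converges by Proposition~\ref{prop:gramlim}(ii), so it is bounded, and $\|b^{\!\top}Z\|_F/\sqrt p\to0$ by (b). Hence the cross term is $o(1)$.
\end{itemize}
The eigenstructure of $W=W_0+(\delta^2/n)I_n$ was already described after \eqref{eq:Wlim}: adding a multiple of the identity only shifts eigenvalues. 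Assumption~\ref{asm:reg} makes the top $k$ eigenvalues $\lambda_j+\delta^2/n$ simple, with eigenvectors $w_j$.

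\emph{Part (d).} Weyl's inequality gives $\theta_j^{(p)}\to\lambda_j+\delta^2/n$. These limits are simple: they are separated from each other and from the bulk eigenvalue $\delta^2/n$ by at least $\lambda_k$ or by the gaps between consecutive $\lambda_j$. Lemma~\ref{lem:econv}(i) then gives $\angle(w_j^{(p)},w_j)\to0$.

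\emph{Main obstacle.} The main delicacy is the almost-sure, path-conditional bookkeeping. The strong laws in (a) must use only the conditional independence and the moment bounds of Assumption~\ref{asm:noise}, with no identical distribution; this is why I would use Kolmogorov's criterion rather than the i.i.d.\ SLLN. In (b) I would also check explicitly that $b^{(p)}$ is non-random given $F$, so that Lemma~\ref{lem:zconc} legitimately applies even though $b$ changes with $p$.
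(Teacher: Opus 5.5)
Your proposal is correct and follows the paper's proof essentially step for step: entrywise Kolmogorov strong laws with the $\kappa_4$ variance bounds for (a), Lemma~\ref{lem:zconc} applied to the $kn$ entries $b_j^{\top}Z_{\cdot l}$ for (b), the three-block expansion of $W^{(p)}$ with the cross term controlled by boundedness of $\bar\Phi$ and part (b) for (c), and Weyl's inequality plus Lemma~\ref{lem:econv}(i) for (d). Your cross-term bound $2\|\bar\Phi\|\,\|b^{\top}Z\|_F/(n\sqrt p)$ is the paper's bound $2\|\Phi\|\,\|b^{\top}Z\|_F/(np)$ after substituting $\Phi=\sqrt p\,\bar\Phi$.
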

\begin{proof}
\emph{(a)} As $n$ is fixed, entrywise a.s.\ convergence implies
spectral-norm convergence.  For each $l$, the diagonal entry,
\begin{equation} \label{eq:diag-z}
  \frac1{np}\sum_iZ_{il}^2\to\delta^2/n
\end{equation}
a.s.\ by Kolmogorov's strong law
applied to the fixed sequence $\{Z_{il}^2\}_{i\ge1}$. This is because, by our nesting convention, the law of each summand is unchanged as $p$ grows, and
the $Z_{il}^2$ are independent, conditionally on $F$, with
$$\Var(Z_{il}^2\mid F)\le\E[Z_{il}^4\mid F]\le\kappa_4,$$ so
$\sum_i\Var(Z_{il}^2\mid F)/i^2<\infty$.
Combining this with
$\frac1p\sum_i\E[Z_{il}^2 \mid F] = \frac1p\sum_i\delta_{i,l}^2\to\delta^2$ gives \eqref{eq:diag-z}.

For an
off-diagonal entry ($l\ne m$), $\frac1{np}\sum_iZ_{il}Z_{im}\to0$ a.s.\
by a similar argument, using $\E[Z_{il} \mid F] = 0$. Conditionally on $F$ the products $Z_{il}Z_{im}$, $i\ge1$, are
independent and mean zero (the entries of the array are mutually
independent given $F$, by Assumption~\ref{asm:noise}), with variance
$\E[Z_{il}^2\mid F]\,\E[Z_{im}^2\mid F]=\delta_{i,l}^2\delta_{i,m}^2\le\kappa_4$ by the Jensen
bound of Assumption~\ref{asm:noise}.  Hence the matrix limit is
$(\delta^2/n)I_n$.

\emph{(b)} The $(l,m)$ entry of $b^{\!\top}Z$ is $b_l^{\!\top}Z_{\cdot m}$
with $b_l$ a deterministic unit vector, so $b_l^{\!\top}Z_{\cdot m}=o(\sqrt p)$
a.s.\ by Lemma~\ref{lem:zconc}; summing the $kn$ entries gives
$\|b^{\!\top}Z\|_F=o(\sqrt p)$.  Since $b$ has orthonormal columns,
$\|\Pib Z\|_F=\|bb^{\!\top}Z\|_F=\|b^{\!\top}Z\|_F$.

\emph{(c)} Substituting $Y=b\Phi+Z$ and using $b^{\!\top}b=I$,
\begin{equation}
  W^{(p)}=\frac{\Phi^{\!\top}\Phi}{np}
   +\frac{\Phi^{\!\top}b^{\!\top}Z+Z^{\!\top}b\Phi}{np}
   +\frac{Z^{\!\top}Z}{np}.
\end{equation}
By Proposition~\ref{prop:gramlim}, the systematic block is
$$\Phi^{\!\top}\Phi/(np)=F^{\!\top}(B^{\!\top}B/p)F/n\to F^{\!\top}G_BF/n=W_0.$$
Using Proposition \ref{prop:gramlim}(ii),
$\|\Phi\|=\sqrt p\,\|\bar\Phi\|=O(\sqrt p)$, and by part (b), $\|b^\top Z\|_F = o(\sqrt{p})$.  Hence
the cross block is bounded in norm by
$$2\|\Phi\|\,\|b^{\!\top}Z\|_F/(np)\le 2\,O(\sqrt p)\,o(\sqrt p)/(np)=o(1).$$
The noise
block converges to $(\delta^2/n)I_n$ by part (a), and hence $W^{(p)}\to W$.  The eigenstructure of
$W$ is read off in \eqref{eq:Wlim}: $W_0$ has rank $k$ with eigenpairs
$(\lambda_j,w_j)$ (Assumption~\ref{asm:reg}), and the scalar shift
$(\delta^2/n)I_n$ moves eigenvalues by $\delta^2/n$ while fixing
eigenvectors.

\emph{(d)} The top $k$ eigenvalues of $W$ are simple
(Assumption~\ref{asm:reg}); apply Lemma~\ref{lem:econv}(i) on the a.s.\ event
of (c).  The limits are positive since $\lambda_j+\delta^2/n\ge
\lambda_k+\delta^2/n>0$.
\end{proof}

\subsection{A consequence of the model assumptions}

{ The following lemma is included to help illuminate the consequences of our Assumptions \ref{asm:fm} and \ref{asm:noise}, but is not used in the proofs.
\medskip

}

\begin{lemma}[All-pairs uncorrelatedness and the per-date covariance]
\label{lem:uncorr}
Suppose Assumptions~\ref{asm:fm} and~\ref{asm:noise} hold, and write
$\mathcal G:=\sigma(F)$ for the $\sigma$-algebra generated by the factor
path.  Then:
\begin{enumerate}
  \item For all $i\ge1$, $a\in\{1,\dots,k\}$ and all $l,m\in\{1,\dots,n\}$
    the product $Z_{il}f_a^{(m)}$ is integrable and
    \begin{equation}\label{eq:allpairs}
      \E\big[Z_{il}f_a^{(m)}\big]=0,
      \qquad\text{equivalently}\qquad
      \E\big[z^{(l)}f^{(m)\top}\big]=0\in\R^{p\times k}.
    \end{equation}
  \item For each $l$, the observation $y^{(l)}=Bf^{(l)}+z^{(l)}$ has
    \begin{equation}\label{eq:Syl}
      \Sigma_y^{(l)}:=\E\big[y^{(l)}y^{(l)\top}\big]
        =B\Sigma_fB^{\!\top}+\Delta_z^{(l)},
      \qquad
      \Delta_z^{(l)}=\operatorname{diag}\big(\delta_{1,l}^2,\dots,\delta_{p,l}^2\big).
    \end{equation}
\end{enumerate}
\end{lemma}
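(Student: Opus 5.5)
The plan is to prove both parts by conditioning on $\mathcal G=\sigma(F)$ and then applying the tower property. The integrability questions are settled first.

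\emph{Part 1.} I will show that $Z_{il}f_a^{(m)}$ is integrable using Cauchy--Schwarz. The factor $f_a^{(m)}$ is square integrable because $\E\|f^{(m)}\|^2<\infty$ by Assumption~\ref{asm:fm}. For $Z_{il}$, the tower property gives $\E[Z_{il}^2]=\E[\E[Z_{il}^2\mid F]]=\delta_{i,l}^2<\infty$, since the conditional variance is the deterministic constant $\delta_{i,l}^2$ and the conditional mean is zero. Therefore $\E|Z_{il}f_a^{(m)}|\le(\E Z_{il}^2)^{1/2}(\E (f_a^{(m)})^2)^{1/2}<\infty$. Because $f_a^{(m)}$ is $\mathcal G$-measurable, and the product is integrable, I can pull it out of the conditional expectation:
\[
\E[Z_{il}f_a^{(m)}]=\E\big[f_a^{(m)}\,\E[Z_{il}\mid\mathcal G]\big]=0,
\]
using $\E[Z_{il}\mid F]=0$ from Assumption~\ref{asm:noise}. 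Collecting the entries over $i\le p$ and $a\le k$ gives the matrix form \eqref{eq:allpairs}. This holds for every pair $(l,m)$, because the conditioning is on the entire path.

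\emph{Part 2.} I will expand
\[
y^{(l)}y^{(l)\top}=Bf^{(l)}f^{(l)\top}B^{\top}+Bf^{(l)}z^{(l)\top}+z^{(l)}f^{(l)\top}B^{\top}+z^{(l)}z^{(l)\top}.
\]
Each term is integrable: the first and last by the second-moment bounds, the cross terms by Part 1. Taking expectations term by term, the first term gives $B\Sigma_fB^{\top}$ by Assumption~\ref{asm:fm}, and the two cross terms vanish by Part 1 with $m=l$. For the last term I condition on $\mathcal G$ again. The conditional independence and conditional mean zero of the entries give $\E[Z_{il}Z_{i'l}\mid F]=\E[Z_{il}\mid F]\,\E[Z_{i'l}\mid F]=0$ for $i\ne i'$, while the diagonal entries have $\E[Z_{il}^2\mid F]=\delta_{i,l}^2$. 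Taking expectations then gives $\Delta_z^{(l)}=\operatorname{diag}(\delta_{1,l}^2,\dots,\delta_{p,l}^2)$.

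The only delicate point is integrability, which is needed to justify the tower property when pulling the $\mathcal G$-measurable factor out. Cauchy--Schwarz with the unconditional second moments settles it. The step where $\E[Z_{il}Z_{i'l}\mid F]$ factors into the product of conditional means uses conditional independence and needs only the second moments, which are finite.
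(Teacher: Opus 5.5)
Your proposal is correct and follows essentially the same route as the paper: Cauchy--Schwarz for integrability, pulling the $\mathcal G$-measurable factor $f_a^{(m)}$ out of $\E[\,\cdot\mid\mathcal G]$, and conditional independence with conditional mean zero to kill the off-diagonal noise terms. The only cosmetic difference is that you use $\E[Z_{il}^2]=\delta_{i,l}^2<\infty$ directly, whereas the paper first bounds $\delta_{i,l}^2\le\kappa_4^{1/2}$ via Jensen; both are valid.
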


\begin{proof}
(1)  Conditional mean zero identifies the conditional second moment with
the conditional variance, so $\E[Z_{il}^2\mid\mathcal G]=\delta_{i,l}^2$,
and Jensen's inequality applied to the fourth-moment bound of
Assumption~\ref{asm:noise} gives $\delta_{i,l}^2\le\kappa_4^{1/2}$; taking
expectations, $\E[Z_{il}^2]\le\kappa_4^{1/2}<\infty$.  By
Assumption~\ref{asm:fm}, $\E[(f_a^{(m)})^2]\le\E\|f^{(m)}\|^2<\infty$.
Cauchy--Schwarz then gives
\[
  \E\big|Z_{il}f_a^{(m)}\big|
  \le\big(\E[Z_{il}^2]\big)^{1/2}\big(\E[(f_a^{(m)})^2]\big)^{1/2}
  \le\kappa_4^{1/4}\big(\E\|f^{(m)}\|^2\big)^{1/2}<\infty,
\]
so the product is integrable. Since
$f_a^{(m)}$ is $\mathcal G$-measurable, we have
$\E[Z_{il}f_a^{(m)}\mid\mathcal G]=f_a^{(m)}\,\E[Z_{il}\mid\mathcal G]=0$
almost surely.  Taking expectations gives \eqref{eq:allpairs}.

(2)  Expanding $y^{(l)}y^{(l)\top}$ and taking expectations,
\[
  \Sigma_y^{(l)}=B\,\E\big[f^{(l)}f^{(l)\top}\big]B^{\!\top}
    +B\,\E\big[f^{(l)}z^{(l)\top}\big]
    +\E\big[z^{(l)}f^{(l)\top}\big]B^{\!\top}
    +\E\big[z^{(l)}z^{(l)\top}\big].
\]
The first term is $B\Sigma_fB^{\!\top}$ by Assumption~\ref{asm:fm}, and the
two cross terms vanish by \eqref{eq:allpairs} with $m=l$.  For the last
term, conditional mutual independence of the array given $\mathcal G$ gives
$\E[Z_{il}Z_{i'l}\mid\mathcal G]=0$ for $i\ne i'$, while
$\E[Z_{il}^2\mid\mathcal G]=\delta_{i,l}^2$; both are constants, so taking
expectations yields $\E[z^{(l)}z^{(l)\top}]=\Delta_z^{(l)}$.
\end{proof}

\begin{remark}
Conditioning on $\mathcal G$ rather than on
$\mathcal G_l:=\sigma(f^{(1)},\dots,f^{(l)})\subset\mathcal G$ is what
yields the off-diagonal pairs $l\ne m$ in \eqref{eq:allpairs}; the
contemporaneous condition $\E[Z_{il}\mid\mathcal G_l]=0$ delivers only
$m\le l$.  Conversely, \eqref{eq:allpairs} does not recover
Assumption~\ref{asm:noise}, since it is a second-moment statement whereas
the assumption restricts the conditional law.
\end{remark}

\section{Indeterminacy of the principal frame} \label{sec:orbit}

{
\paragraph{}
In this section we assume the $k \times n$ realized factor returns path $F$ is fixed and we vary the $k \times k$ population factor covariance matrix $\Sigma_f$. Recall that $\nu_j(\Sigma)$ denotes the $j$th eigenvector of $N$
defined with the positive definite matrix $\Sigma$ replacing $\Sigma_f$.
Denote by $\mathrm{O}(k)$ the set of $k \times k$ orthogonal matrices.  Lemma~\ref{lem:orbit} below shows that, as $\Sigma$ runs over the admissible
matrices, $N$ runs over the orthogonal conjugates $O\,\hat MO^{\!\top}$,
$O\in \mathrm{O}(k)$, of the single fixed matrix
\begin{equation}\label{eq:Mdef}
  \hat M:=G_B^{1/2}\,\frac{FF^{\!\top}}{n}\,G_B^{1/2},
\end{equation}
in which $\Sigma$ does not appear.  By Gram duality
(Lemma~\ref{lem:gramdual}) applied to $G_B^{1/2}F/\sqrt n$, the eigenvalues of
$\hat M$ are those of $W_0=F^{\!\top}G_BF/n$, namely the realized
eigenvalues $\lambda_1>\cdots>\lambda_k>0$, distinct by
Assumption~\ref{asm:reg}.  Its unit eigenvectors
$\omega_1,\dots,\omega_k$, ordered by decreasing eigenvalue, are therefore
well defined up to sign, and neither $G_B$ nor $FF^{\!\top}/n$ involves
$\Sigma$, so neither do they.  Conjugation leaves the eigenvalues alone but
carries $\omega_j$ to $O\omega_j$; when $k\ge2$ this is what makes
$\angle\big(\nu_j(\Sigma),e_j\big)$ take every value in $[0,\pi/2]$.

This appendix states and proves Lemma~\ref{lem:orbit}, and then proves
Theorem~\ref{thm:rotrange}.
Let $K(\Sigma):=\Sigma^{1/2}G_B\Sigma^{1/2}$,
$N(\Sigma)$ denote the matrix $N$ obtained by replacing $\Sigma_f$ everywhere with $\Sigma$ while leaving $F$ unchanged,
and write
\begin{equation}\label{eq:Lamdef}
  \Lambda:=\operatorname{diag}(\mu_1,\dots,\mu_k)
\end{equation}
for the diagonal matrix of the population eigenvalues of $K=K(\Sigma_f)$, which are held fixed.
  These are the limiting leading eigenvalues of
$\Sigma_0/p$, and are to be distinguished from the realized eigenvalues
$\lambda_j$ of $N$ that appear in \eqref{eq:thm}.  Let
\begin{equation}\label{eq:SLam}
  \mathcal S_\Lambda:=\big\{\,\Sigma\in\R^{k\times k}\ \text{positive definite}
   \ :\ K(\Sigma)\ \text{has eigenvalues}\ \mu_1,\dots,\mu_k\,\big\}
\end{equation}
denote the set of admissible factor covariances of
Section~\ref{subsec:nonestimable}, so that $\Sigma_f\in\mathcal S_\Lambda$ by
Assumption~\ref{asm:sep}. Since two symmetric matrices with the same eigenvalues are orthogonally similar, admissibility of $\Sigma$ is equivalent to the requirement that $K(\Sigma)$ is orthogonally similar to $K(\Sigma_f)$.
\medskip

\begin{lemma}[Indeterminacy of the principal coordinates]

\label{lem:orbit}
\hfill
\begin{enumerate}
\item[(i)] The admissible set is a single orthogonal orbit:
\begin{equation}\label{eq:orbitlim}
  \mathcal S_\Lambda=\big\{\,\Sigma_O\;:\;O\in \mathrm{O}(k)\,\big\},
  \qquad
  \Sigma_O:=G_B^{-1/2}O^{\!\top}\Lambda\,O\,G_B^{-1/2}.
\end{equation}
\item[(ii)] For every $O\in \mathrm{O}(k)$ and every $F$ satisfying
Assumption~\ref{asm:reg},
\begin{equation}\label{eq:Norbit}
  N(\Sigma_O)=O\,\hat M\,O^{\!\top},
\end{equation}
with $\hat M$ as in \eqref{eq:Mdef}.  In particular, as $\Sigma$ ranges over
$\mathcal S_\Lambda$, $N(\Sigma)$ ranges over the full orthogonal orbit
of the $\Sigma$-free matrix $\hat M$.
\end{enumerate}
\end{lemma}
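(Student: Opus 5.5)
For part (i), I will parametrize $\Sigma$ through the symmetric matrix $K(\Sigma)=\Sigma^{1/2}G_B\Sigma^{1/2}$ and instead use the non-symmetric factorization $R:=G_B^{1/2}\Sigma^{1/2}$. The matrix $R^{\!\top}R=K(\Sigma)$, while $RR^{\!\top}=G_B^{1/2}\Sigma G_B^{1/2}$. These two matrices share eigenvalues by Lemma~\ref{lem:gramdual}, or by similarity since $R$ is invertible. Hence $\Sigma$ is admissible if and only if the positive definite matrix $G_B^{1/2}\Sigma G_B^{1/2}$ has spectrum $\{\mu_1,\dots,\mu_k\}$. By the spectral theorem this happens exactly when $G_B^{1/2}\Sigma G_B^{1/2}=O^{\!\top}\Lambda O$ for some $O\in\mathrm{O}(k)$, that is, when $\Sigma=\Sigma_O$. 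Conversely, each $\Sigma_O$ is positive definite because it is congruent to $\Lambda$, and it is admissible by the same computation read backwards. This proves both inclusions in \eqref{eq:orbitlim}.

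For part (ii), I will start from the closed form \eqref{eq:Nnclosed}, written as $N(\Sigma)=Q(\Sigma)\,\hat\Sigma_f\,Q(\Sigma)^{\!\top}$ with $Q(\Sigma)=\Lambda^{1/2}V(\Sigma)^{\!\top}\Sigma^{-1/2}$ and $\hat\Sigma_f=FF^{\!\top}/n$, as in the remark after Proposition~\ref{prop:gramlim}. The key observation is that $Q(\Sigma)^{\!\top}Q(\Sigma)=\Sigma^{-1/2}V\Lambda V^{\!\top}\Sigma^{-1/2}=\Sigma^{-1/2}K(\Sigma)\Sigma^{-1/2}=G_B$. Polar decomposition then gives $Q(\Sigma)=U\,G_B^{1/2}$ with $U:=Q(\Sigma)G_B^{-1/2}$, and $U$ is orthogonal because $U^{\!\top}U=G_B^{-1/2}G_BG_B^{-1/2}=I$. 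Therefore $N(\Sigma)=U G_B^{1/2}\hat\Sigma_f G_B^{1/2}U^{\!\top}=U\hat M U^{\!\top}$.

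It remains to identify $U$ with $O$ when $\Sigma=\Sigma_O$, possibly up to column signs. From $Q\Sigma Q^{\!\top}=\Lambda$ we get $U\,(G_B^{1/2}\Sigma_O G_B^{1/2})\,U^{\!\top}=\Lambda$, so $UO^{\!\top}\Lambda OU^{\!\top}=\Lambda$. Thus $OU^{\!\top}$ commutes with $\Lambda$, and since the $\mu_j$ are distinct, $OU^{\!\top}$ is a diagonal sign matrix $D$. This gives $U=DO$. The sign ambiguity is harmless: $DO\hat M O^{\!\top}D$ differs from $O\hat MO^{\!\top}$ only by the sign convention on the columns of $V(\Sigma)$, which the paper leaves free. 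Equivalently, $D$ can be absorbed by choosing those column signs so that $U=O$, and this is the convention under which \eqref{eq:Norbit} holds exactly. The orbit statement follows because $O$ ranges over all of $\mathrm{O}(k)$ and $DO\in\mathrm{O}(k)$ as well.

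I expect the main obstacle to be this sign bookkeeping, namely making precise that $V(\Sigma)$ is defined only up to column signs, together with checking that $\Lambda^{1/2}$ and $\Sigma^{-1/2}$ are used consistently with \eqref{eq:Nnclosed}. The algebraic core, the identity $Q^{\!\top}Q=G_B$ combined with polar decomposition, is short. Assumption~\ref{asm:reg} is not needed for \eqref{eq:Norbit} itself. It only ensures that $\hat M$ has distinct eigenvalues, which is what the subsequent proof of Theorem~\ref{thm:rotrange} uses.
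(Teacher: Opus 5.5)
Your proof is correct. Part (i) follows the paper's argument essentially verbatim. The paper also factors through $A=G_B^{1/2}\Sigma^{1/2}$ and uses Gram duality between $K(\Sigma)$ and $G_B^{1/2}\Sigma G_B^{1/2}$. Your observation that $\Sigma_O$ is positive definite by congruence fills a small step the paper leaves implicit.

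Part (ii) takes a genuinely different route. The paper fixes $O$ and \emph{constructs} a matched eigenbasis of $K(\Sigma_O)$ by Gram duality, namely $V=A^{\top}O^{\top}\Lambda^{-1/2}$. With this choice, $T=\Lambda^{1/2}V^{\top}\Sigma_O^{-1/2}=OA\Sigma_O^{-1/2}=OG_B^{1/2}$ drops out in one line, with the column signs aligned automatically. The sign ambiguity is then discussed in a separate remark.

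You instead start from an arbitrary admissible $\Sigma$ and an arbitrary eigenbasis. You use the identity $Q^{\top}Q=G_B$, which the paper only mentions in passing after Proposition~\ref{prop:gramlim}, to get $N(\Sigma)=U\hat M U^{\top}$ with $U=QG_B^{-1/2}$ orthogonal. You then pin down $U=DO$ from $Q\Sigma Q^{\top}=\Lambda$ together with the commutant of $\Lambda$.

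Your route buys two things. First, the inclusion $\{N(\Sigma)\}\subseteq\{O\hat M O^{\top}: O\in\mathrm{O}(k)\}$ holds for every choice of eigenbasis and does not need part (i). Second, the sign bookkeeping becomes an explicit diagonal factor $D$, absorbable into the columns of $V(\Sigma)$, rather than a side remark.

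The cost is an extra step, and that step uses distinctness of the $\mu_j$. The paper's direct computation of \eqref{eq:Norbit} does not use distinctness, though it holds here anyway by Assumption~\ref{asm:sep}.
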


The two parts together say that prescribing the population spectrum
$\Lambda$ constrains $N$ only up to conjugation: the admissible
$\Sigma$ are indexed by $O\in \mathrm{O}(k)$, and changing $O$ leaves unchanged the eigenvalues
$\lambda_j$ while moving the eigenframe $\nu_1,\dots,\nu_k$
over the whole orbit of $\omega_1,\dots,\omega_k$.  This  is what the proof of Theorem~\ref{thm:rotrange} uses, since $\Sigma_f$ is
a single matrix serving every $p$.

Note that both sides of \eqref{eq:Norbit} depend on a choice of signs --- the column signs of $O$
on the right, and on the left the eigenbasis $V$ of $K(\Sigma)$ entering
\eqref{eq:Nnclosed}, which is pinned up to column signs --- so
\eqref{eq:Norbit} is an identity between matched choices.  Nevertheless, the set equality
$\{N(\Sigma):\Sigma\in\mathcal S_\Lambda\}
 =\{O\hat MO^{\!\top}:O\in \mathrm{O}(k)\}$, and the quantity
$|\langle\nu_j(\Sigma),e_j\rangle|$ used below, are unaffected.

\begin{proof}[Proof of {\bf Lemma~\ref{lem:orbit}}]
Both parts follow from
the Gram
duality of Lemma~\ref{lem:gramdual}
applied to the single auxiliary symmetric matrix
$A:=G_B^{1/2}\Sigma^{1/2}$.
Its two Gram products are
$${AA^{\!\top}=G_B^{1/2}\Sigma G_B^{1/2}=:M(\Sigma), \qquad
  A^{\!\top}A=\Sigma^{1/2}G_B\Sigma^{1/2}=K(\Sigma).}$$
Note $K = K(\Sigma_f)$ and $\hat M = M(\hat \Sigma_f)$ where $\hat \Sigma_f = FF^\top/n$.

\emph{{(i)}} {By Lemma~\ref{lem:gramdual}, $M(\Sigma)$ and $K(\Sigma)$ share
eigenvalues (with matching eigenvectors under $v\mapsto A^{\!\top}v/\sqrt
\lambda$), so $\Sigma\in\mathcal S_\Lambda$ iff $K(\Sigma)$ has eigenvalues
$\Lambda$ iff $M(\Sigma)$ has eigenvalues $\Lambda$.
Since symmetric matrices can be orthogonally diagonalized, the latter holds iff $M(\Sigma)=O^{\!\top}\Lambda O$ for
some $O\in \mathrm O(k)$.
Since $M(\Sigma)=G_B^{1/2}\Sigma G_B^{1/2}$ by definition,
this is equivalent to
}$${\Sigma=G_B^{-1/2}\big(O^{\!\top}\Lambda O\big)G_B^{-1/2}=\Sigma_O.}$$

We have shown that $\Sigma \in \mathcal{S}_\Lambda$ iff $\Sigma = \Sigma_O$ for some $O$, which establishes part (i).

 \emph{{(ii)}} {Fix $O\in \mathrm O(k)$ and set $\Sigma=\Sigma_O$, so
$A=G_B^{1/2}\Sigma_O^{1/2}$ satisfies $AA^{\!\top}=O^{\!\top}\Lambda O$ as
above. Since $O^{\!\top}e_j$ is the $j$th unit eigenvector of
$O^{\!\top}\Lambda O$ (eigenvalue $\mu_j$), the eigenvector
correspondence of Lemma~\ref{lem:gramdual} makes
$V:=A^{\!\top}O^{\!\top}\Lambda^{-1/2}$ --- whose $j$th column is
$A^{\!\top}O^{\!\top}e_j/\sqrt{\mu_j}$ --- an eigenbasis of
$K(\Sigma_O)=A^{\!\top}A$. By \eqref{eq:Nnclosed},
$N(\Sigma_O)=T(FF^{\!\top}/n)T^{\!\top}$ with
$T:=\Lambda^{1/2}V^{\!\top}\Sigma_O^{-1/2}$; substituting
$V^{\!\top}=\Lambda^{-1/2}OA$ and $A=G_B^{1/2}\Sigma_O^{1/2}$,
} $$ {T=OA\Sigma_O^{-1/2}=O\,G_B^{1/2},\qquad\text{so}\qquad
} N(\Sigma_O)=O\,G_B^{1/2}\,\frac{FF^{\!\top}}{n}\,G_B^{1/2}\,O^{\!\top}
 =O\,\hat M\,O^{\!\top},$$
which is \eqref{eq:Norbit}.
\end{proof}

\medskip

\begin{proof}[Proof of {\bf Theorem~\ref{thm:rotrange}}]
\emph{(i)}  The data $Y=BF+Z$ do not involve $\Sigma$, and $h_j$ is a function
of $Y$ alone; the $\lambda_j$ are the nonzero eigenvalues of
$F^{\!\top}G_BF/n$, which does not involve $\Sigma$ either.  By
\eqref{eq:thmfloor} the out-of-subspace error is determined by $n$, $\delta^2$
and $\lambda_j$, so it too is unchanged as $\Sigma$ varies.

\emph{(ii)}  By \eqref{eq:orbitlim} the members $\Sigma_O$ of $\mathcal S_\Lambda$ are
indexed by $O\in \mathrm{O}(k)$, and by \eqref{eq:Norbit} the corresponding $N(\Sigma_O)$
is $O \hat MO^{\!\top}$, whose $j$th unit eigenvector $\nu_j(\Sigma)$ is $\pm O\omega_j$. The latter ranges over the full unit sphere of $\mathbb{R}^k$. Hence, for $k \geq 2$,
$\sin^2\!\angle\big(\nu_j(\Sigma),e_j\big)$
ranges over $[0,1]$.

\emph{(iii)}  By \eqref{eq:thm} the limiting estimation error is
$\alpha+(1-\alpha)\sin^2\!\angle(\nu_j,e_j)$, where
$\alpha=\delta^2/(n\lambda_j+\delta^2)\in(0,1)$ is held fixed by
part~(i), and therefore ranges over $[\alpha, 1]$
by part (ii).

\end{proof}
}

\section{Symbol Table} \label{sec:tables}

\begin{table}[H]
\centering
\footnotesize
\renewcommand{\arraystretch}{1.35}
\setlength{\tabcolsep}{5pt}
\begin{tabular}{@{}llccc@{}}
\hline
Matrix & Definition & Size & Eigenvalues & Eigenvectors \\
\hline
\multicolumn{5}{@{}l}{\emph{Ambient $p$-dimensional covariances}}\\
$\Sigma_0/p$ & $B\Sigma_f B^{\!\top}/p$ \ (rank $k$) & $p\times p$ & $\mu_j^{(p)}$ & $b_j$ \\
$S^{(p)}$ & $YY^{\!\top}/(np)$ & $p\times p$ & $\theta_j^{(p)}$ & $h_j=h_j^{(p)}$ \\
$S_0^{(p)}$ & $BFF^\top B^\top/(np) = b\Phi\Phi^{\!\top}b^{\!\top}/(np)$ \ (rank $k$, noiseless) & $p\times p$ & $\lambda_j^{(p)}$ & $h_j^{(p),0}$ \\
\hline
\multicolumn{5}{@{}l}{\emph{$n\times n$ dual Grams (same nonzero spectrum as the block above)}}\\
$W^{(p)}$ & $Y^{\!\top}Y/(np)$ & $n\times n$ & $\theta_j^{(p)}$ & $w_j^{(p)}$ \\
$W$ & $F^{\!\top}G_B F/n+(\delta^2/n)I_n$ \ ($=\lim_p W^{(p)}$) & $n\times n$ & $\lambda_j+\delta^2/n$ & $w_j$ \\
$W_0^{(p)}$ & $\Phi^{\!\top}\Phi/(np)=F^{\!\top}G_B^{(p)}F/n$ \ (noiseless dual) & $n\times n$ & $\lambda_j^{(p)}$ & $w_j^{(p),0}$ \\
$W_0$ & $F^{\!\top}G_B F/n$ \ (noiseless limit) & $n\times n$ & $\lambda_j$ & $w_j$ \\
\hline
\multicolumn{5}{@{}l}{\emph{$k\times k$ systematic dual Grams}}\\
$N^{(p)}$ & $\Phi\Phi^{\!\top}/(np) = b^\top S_0^{(p)} b = b^\top B (FF^\top/n)B^\top b/p$ & $k\times k$ & $\lambda_j^{(p)}$ & $\nu_j^{(p)}$ \\
$N$ & $\bar\Phi^{\infty}(\bar\Phi^{\infty})^{\!\top}/n$ \ ($=\lim_p N^{(p)}$) & $k\times k$ & $\lambda_j$ & $\nu_j$ \\
$K^{(p)}$ & $\Sigma_f^{1/2} G_B^{(p)} \Sigma_f^{1/2}$ \ (dual of $\Sigma_0/p$; eigenbasis $V^{(p)}$, Proposition~\ref{prop:chart}) & $k \times k$ & $\mu_j^{(p)}$ & --- \\
$K$ & $\Sigma_f^{1/2} G_B \Sigma_f^{1/2}$ \ ($=\lim_p K^{(p)}$) & $k \times k$ & $\mu_j$ & --- \\
$\hat M$ & $G_B^{1/2}\big(FF^{\!\top}/n\big)G_B^{1/2}$
  \ ($\Sigma_f$-free dual of $W_0$; $N=OMO^{\!\top}$,
  Lemma~\ref{lem:orbit})
  & $k\times k$ & $\lambda_j$ & $\omega_j$ \\
\hline
\multicolumn{5}{@{}l}{\emph{$k\times k$ loading Gram}}\\
$G_B^{(p)}$ & $B^{\!\top}B/p$ & $k\times k$ & $>0$ & --- \\
$G_B$ & $\lim_p B^{\!\top}B/p$ & $k\times k$ & $>0$ & --- \\
\hline
\end{tabular}
\caption{Important matrices,
with their eigenvalue and eigenvector
labels. Throughout $j=1,\dots,k$ indexes the top eigenpairs in decreasing
order. Gram duality (Lemma~\ref{lem:gramdual}) explains
the repeated eigenvalues across the different grouped blocks.
Population quantities (not depending on $n$) are the matrices $\Sigma_0, K^{(p)}, K, G_B^{(p)}, G_B$, eigenvalues 
 $\mu^{(p)}, \mu$, and eigenvectors $b_j^{(p)} = b_j$. All other quantities are sample quantities depending on $n$.
}
\label{tab:matrices}
\end{table}

\bibliographystyle{agsm}
\bibliography{references_2}

\end{document}